\documentclass[11pt]{amsart}

\usepackage{a4wide, amsmath, amsfonts, amssymb, mathrsfs, amsthm, breqn}
\usepackage[hyperfootnotes=false]{hyperref}
\usepackage{shuffle}
\usepackage{enumitem}
\usepackage{graphicx}
\usepackage{subcaption}
\allowdisplaybreaks[2]

\numberwithin{equation}{section}

\newtheorem{theorem}{Theorem}[section]
\newtheorem{lemma}[theorem]{Lemma}
\newtheorem{corollary}[theorem]{Corollary}
\newtheorem{remark}[theorem]{Remark}
\newtheorem{definition}[theorem]{Definition}

\newtheorem{proposition}[theorem]{Proposition}

\newtheorem{example}[theorem]{Example}
\newtheorem*{gammaRIE}{Property~$\boldsymbol{\gamma}$-\textup{(RIE)}}
\newtheorem*{RIE}{Property~\textup{(RIE)}}

\newcommand{\dd}{\,\mathrm{d}}
\renewcommand{\d}{\mathrm{d}}
\newcommand{\hd}{\hat{d}}
\renewcommand{\epsilon}{\varepsilon}
\renewcommand{\phi}{\varphi}
\newcommand{\R}{\mathbb{R}}
\newcommand{\N}{\mathbb{N}}
\newcommand{\X}{\mathbb{X}}
\renewcommand{\P}{\mathbb{P}}
\newcommand{\bX}{\mathbf{X}}
\newcommand{\cP}{\mathcal{P}}
\newcommand{\p}{\frac{p}{2}}
\newcommand{\QX}{Q^{\gamma,\pi}(X)}
\newcommand{\cC}{\mathcal{C}}
\newcommand{\tbbX}{\widetilde{\mathbb{X}}}
\newcommand{\hW}{\widehat{W}}
\newcommand{\hX}{\widehat{X}}
\newcommand{\hbX}{\widehat{\mathbf{X}}}
\newcommand{\hbbX}{\widehat{\mathbb{X}}}
\newcommand{\hbbY}{\widehat{\mathbb{Y}}}

\DeclareMathOperator{\spn}{span}

\title[Universal approximation with signatures of non-geometric rough paths]{Universal approximation with signatures of non-geometric rough paths}

\author[Ceylan]{Mihriban Ceylan}
\address{Mihriban Ceylan, University of Mannheim, Germany}
\email{mihriban.ceylan@uni-mannheim.de}

\author[Kwossek]{Anna P. Kwossek}
\address{Anna P. Kwossek, University of Vienna, Austria}
\email{anna.paula.kwossek@univie.ac.at}

\author[Pr{\"o}mel]{David J. Pr{\"o}mel}
\address{David J. Pr{\"o}mel, University of Mannheim, Germany}
\email{proemel@uni-mannheim.de}

\date{\today}

\begin{document}

\begin{abstract}
  We establish a universal approximation theorem for signatures of rough paths that are not necessarily weakly geometric. By extending the path with time and its rough path bracket terms, we prove that linear functionals of the signature of the resulting rough paths approximate continuous functionals on rough path spaces uniformly on compact sets. Moreover, we construct the signature of a path extended by its pathwise quadratic variation terms based on general pathwise stochastic integration {\`a} la F{\"o}llmer, in particular, allowing for pathwise It{\^o}, Stratonovich, and backward It{\^o} integration. In a probabilistic setting, we obtain a universal approximation result for linear functionals of the signature of continuous semimartingales extended by the quadratic variation terms, defined via stochastic It{\^o} integration. Numerical examples illustrate the use of signatures when the path is extended by time and quadratic variation in the context of model calibration and option pricing in mathematical finance.
\end{abstract}

\maketitle 

\noindent \textbf{Key words:} signature; universal approximation; rough path; pathwise stochastic integration; semimartingale; model calibration; pricing of financial derivatives.

\noindent \textbf{MSC 2020 Classification:} Primary: 60L10; Secondary: 60H05; 60G17; 91G60.



\section{Introduction}

Many quantities that occur from real-world dynamics or complex mathematical models are analytically intractable, making it practically unavoidable to approximate them by simpler, numerically tractable ones that can be fitted to data. In this regard, it is very relevant, for example, for machine learning tasks, quantitative finance, and data analysis in general, to ``faithfully'' summarize time series data in an efficient and computable way. In response, in recent years, an increasingly active strand of research has been concerned with developing and applying data-driven methods based on the \emph{signature of a path}, which turns out to be a very suitable feature map for streamed data.

In mathematical finance, the applications are manifold and include, among others, asset pricing of European~\cite{Arribas2018,Lyons2019,Lyons2020} and American options~\cite{Bayraktar2022,Bayer2021,Bayer2025}, detection of market anomalies \cite{Akyildirim2022}, optimal execution~\cite{Kalsi2020,Cartea2022}, portfolio optimization~\cite{Futter2023,Cuchiero2025}, and calibration of financial models~\cite{Arribas2021,Cuchiero2023a,Cuchiero2024}. For a comprehensive exposition of this fast-growing field, we refer to~\cite{Bayer2025a}. Beyond mathematical finance, signature-based techniques have been applied to machine learning in a variety of contexts, including computer vision, natural language processing, and medical data analysis; see, for instance,~\cite{Chevyrev2026,McLeod2025} and the references therein.

The signature of a path was first introduced by Chen~\cite{Chen1957,Chen1977} and plays a prominent role in rough path theory, initiated by Lyons~\cite{Lyons1998}, which provides a rich mathematical framework for analyzing complex evolving systems driven by irregular signals. Formally, the signature of a path $X \colon [0,T] \to \mathbb{R}^d$ is defined as the collection of all the iterated integrals of the path $X=(X^1,\dots,X^d)$ against itself, that is,
\begin{equation*}
  \int_{0 < t_1 < \ldots < t_n < T} \d X^{i_1}_{t_1} \cdots \d X^{i_n}_{t_n},
\end{equation*}
for $i_1, \ldots, i_n \in \{1, \ldots, d\}$ and all $n \in \N$. Assuming the integrals are well-defined, using a suitable notion of integration, the signature summarizes the full evolution and interactions of the components of the path effectively: the signature is known to provide an intriguing nonlinear characterization of the path that is unique up to tree-like equivalence, see~\cite{Hambly2010,Boedihardjo2016}. Importantly, due to the rich algebraic structure, that is immediate given a suitable notion of integration, the signature comes with an intriguing universal approximation property: linear functionals of the signature approximate continuous functionals of the path arbitrarily well on compact sets, analogously to polynomials approximating continuous real valued functions; see, e.g.,~\cite{Levin2013,Lyons2020,Cuchiero2023b}. This universality result lies at the heart of most signature-based methods.

When considering smooth paths $X \colon [0,T] \to \mathbb{R}^d$, the signature is canonically defined via Riemann--Stieltjes integration. When, however, considering paths of low regularity, such as the sample paths of Brownian motion, or more generally, of semimartingales, one cannot rely on Riemann--Stieltjes (or Young) integration, and typically turns to stochastic calculus. Here, the notion of integral becomes ambiguous, with It{\^o} and Stratonovich integration being the most common choices. Once a probabilistic structure is fixed, one can then construct the second-order iterated integrals
\begin{equation*}
  \X_{s,t}^{(2)} :=\bigg(\int_{s < t_1 <t_2 < t} \d X^{i_1}_{t_1}  \d X^{i_2}_{t_2}\bigg)_{i_1, i_2 \in \{1, \ldots, d\}}
  \quad \text{for } s,t\in [0,T],
\end{equation*}
for almost every sample path $X$ of a continuous semimartingale. 

As the Stratonovich integral satisfies first-order calculus (as does the Riemann--Stieltjes integral for smooth paths), using that notion of integration gives rise to a \emph{weakly geometric} rough path $(X, \X^{(2)})$ that is to be understood in the sense of rough path theory; see, e.g., \cite{Lyons2007, Friz2010}. Classically, in the rough path literature, the signature of a (weakly geometric) rough path is defined via Lyons' extension theorem~\cite{Lyons2007}; see Definition~\ref{def: Lyons' lift of weakly geometric rough path} and also, for example, ~\cite{Levin2013,Cuchiero2023b}. Enhancing a weakly geometric rough path, we inherently obtain the desired algebraic, and also geometric, properties of the signature. In particular, the signature is a group-like element and, therefore, satisfies the shuffle property. As a consequence, the linear span of the signature components forms a point-separating algebra. This in turn allows to apply the Stone--Weierstrass theorem to deduce the above-mentioned universal approximation property.

In mathematical finance, It{\^o} integration is typically the more natural choice of stochastic integration, as the It{\^o} integral preserves the martingale property, which also underlies the principle of no-arbitrage, and allows for a transparent financial interpretation---for instance when used to model the capital gains process. It{\^o} integration is preferred also in continuous-time econometric analysis, see~\cite{Bandi2025} for an application of signatures, as well as for model order reduction, as pointed out, for instance, in~\cite{Bayer2024}. Moreover, the signature associated with It{\^o} integration can offer statistical advantages; see~\cite{Guo2025} for a comparison of the statistical consistency of the Lasso estimator using the signature based on It{\^o} and based on Stratonovich integration.

If one constructs the second-order iterated integrals $\X^{(2)}$ as above, now using It{\^o} integration, the resulting rough path $(X,\X^{(2)})$ is in general \emph{not} weakly geometric. The associated signature then is not a group-like valued path, but a path that takes values in the extended tensor algebra. In particular, the shuffle property does not hold and the Stone--Weierstrass theorem cannot be immediately applied to deduce the universal approximation property.

The aim of this paper is to present universal approximation theorems for signatures of rough paths that are not necessarily weakly geometric. We thereby provide a theoretical foundation for approximations based on the It{\^o} signature, which is conceptually natural from the perspective of mathematical finance. Moreover, our work contributes to a recent line of research that has been concerned with deriving universal approximation results for signatures of general rough paths. For instance, see \cite{Harang2024} for a universal approximation result for polynomial functionals of the signature of rough paths, and see~\cite{Ali2025}, where they consider the branched signature based on the branched rough path framework in the sense of~\cite{Gubinelli2010}.

In Section~\ref{sec: general pathwise signatures}, we consider signatures of general rough paths which are not necessarily weakly geometric, and in the $p$-variation regularity regime for $p \in (2,3)$. We show that linear functionals of the signature of rough paths approximate continuous functionals on rough path spaces uniformly on subsets of compacts, when the paths is extended by time and its rough path brackets. More precisely, and related to the natural It{\^o}--Stratonovich correction for semimartingales (see also~\cite{Boedihardjo2019,Bellingeri2026}), our approach is to start with a rough path, and to extend the underlying path by time and its rough path bracket terms. We then lift the extended path to a rough path, and define its signature via Lyons' extension theorem. Although this increases the dimension of the path, it ensures that the resulting signature satisfies the so-called \emph{quasi shuffle property} (see Proposition~\ref{prop: quasi shuffle property}), and that the linear span of its components does form a point-separating algebra. As a consequence, we are able to deduce the universal approximation property of linear functionals of the signature; see Theorem~\ref{thm: UAT non-geometric}. This universal approximation theorem extends the classical one for signatures of weakly geometric rough paths, and allows to consider signatures of such general rough paths as a linear regression basis for continuous path functionals. For completeness, we also provide in Appendix~\ref{appendix: proof of UAT geometric} a direct proof of the universal approximation theorem for signatures of time-extended weakly geometric rough paths.

As an example that fits into this framework, in Section~\ref{sec: signature via pathwise integration}, we introduce a notion of a signature constructed via general pathwise stochastic integration that can be seen as a generalization of F{\"o}llmer integration~\cite{Follmer1981}. For this purpose, we make use of the path property $\gamma$-(RIE), which has been studied in \cite{Das2025}, and which ensures that the path, that is continuous and of finite $p$-variation for some $p \in (2,3)$, extends canonically to a (not necessarily weakly geometric) rough path, where the lift is given as limits of Riemann sums. For such paths, we define the \emph{$\gamma$-signature} as Lyons' extension of this canonical rough path, which coincides with the collection of iterated rough integrals of controlled paths with respect to the rough path. (We present a rigorous proof of this statement for general continuous $p$-rough paths, $p \in (2,3)$, in Appendix~\ref{sec: finite p-variation}.) We remark that the pathwise integrals exist as limits of general Riemann sums along suitable sequences of partitions, thus yielding a unifying framework for pathwise Stratonovich, It{\^o} and backward It{\^o} integration. Corollary~\ref{cor: pathwise UAT under gamma RIE} then states the corresponding universal approximation property of linear functionals of the $\gamma$-signature of paths extended by time and their rough path bracket terms. In this setting, the rough path bracket is closely linked to F{\"o}llmer's notion of pathwise quadratic variation.

In Section~\ref{sec: application to continuous semimartingales}, the deterministic theory is translated into the probabilistic setting using stochastic integration. In particular, we consider the It{\^o} signature of continuous semimartingales, that is, the collection of iterated integrals defined via It{\^o} integration, and obtain a universal approximation theorem for linear functionals of the It{\^o} signature of continuous semimartingales extended by time and their quadratic variation terms; see Corollary~\ref{cor: UAT Ito}.

One may now pose the question when it is actually advantageous in practice to extend the path additionally by its quadratic (co)-variation. Therefore, in Section~\ref{sec: numerical results}, we finally provide numerical examples to briefly showcase the implications of using It\^o signatures in applications in finance: we consider calibration to time-series data, payoff approximation, and pricing tasks for options that naturally depend on quadratic variation.

\medskip
\noindent \textbf{Organization of the paper:} In Section~\ref{sec: general pathwise signatures} we develop the framework for signatures of rough paths and derive a universal approximation theorem using rough paths that are not necessarily weakly geometric, and where the path is extended by its rough path bracket terms. Section~\ref{sec: signature via pathwise integration} introduces the notion of $\gamma$-signatures, based on general pathwise stochastic integration, and presents corresponding universal approximation results. In Section~\ref{sec: application to continuous semimartingales} we translate these findings to the probabilistic setting of continuous semimartingales and obtain a universal approximation theorem for It{\^o} signatures of continuous semimartingales. Section~\ref{sec: numerical results} provides numerical experiments on signature-based approaches for calibration and option pricing, using paths extended by time and quadratic variation. Finally, Appendices~\ref{appendix: proof of UAT geometric}--\ref{appendix: some results} contain proofs that have been postponed, and auxiliary results from rough path theory.

\medskip 
\noindent\textbf{Acknowledgments:} The authors would like to thank C.~Cuchiero for her helpful suggestions during the preparation of this paper. M.~Ceylan gratefully acknowledges financial support by the doctoral scholarship programme from the Avicenna-Studienwerk, Germany.

\section{The signature of rough paths}\label{sec: general pathwise signatures}

We will first recall some essentials from the theory of signatures and rough paths, which we divide into the algebraic and analytic concepts. For a more detailed introduction, we refer to~\cite{Lyons2007,Friz2010}.

\subsection{Algebraic setting for signatures}

The \emph{tensor algebra} and the \emph{extended tensor algebra} on $\R^d$ are defined by
\begin{equation*}
  T(\R^d) := \bigoplus_{n=0}^\infty (\R^d)^{\otimes n} \qquad \text{and} \qquad T((\R^d)) := \prod_{n=0}^{\infty} (\R^d)^{\otimes n},
\end{equation*}
where $(\R^d)^{\otimes n}$ denotes the $n$-fold tensor product of $\R^d$, with the convention $(\R^d)^{\otimes 0} := \R$.

We equip $T((\R^d))$ with the standard addition $+$, tensor multiplication $\otimes$ and scalar multiplication, which is defined for $\mathbf{a} = (a^{(n)})_{n=0}^\infty, \mathbf{b} = (b^{(n)})_{n=0}^\infty \in T((\R^d))$, $\lambda \in \R$, by setting
\begin{align*}
  \mathbf{a} + \mathbf{b} := (a^{(n)} + b^{(n)})_{n=0}^\infty, \quad
  \mathbf{a} \otimes \mathbf{b} := \bigg(\sum_{i+j=n} a^{(i)} \otimes b^{(j)}\bigg)_{n=0}^\infty,\quad\text{and}\quad
  \lambda \mathbf{a} := (\lambda a^{(n)})_{n=0}^\infty.
\end{align*}
These operations induce analogous operations on $T(\R^d)$ and $T^N(\R^d)$ defined below.

We observe that $(T((\R^d)),+,\cdot,\otimes)$ is a real non-commutative algebra. The neutral element is $(1,0,\dots,0,\dots)$.

Let $(e_1, \ldots, e_d)$ be the canonical basis of $\R^d$. \sloppy The Lie algebra that is generated by $\{\mathbf{e}_1, \dots, \mathbf{e}_d\}$, where $\mathbf{e}_i := (0,e_i,0,\dots) \in T(\R^d)$, and the commutator bracket
\begin{equation*}
  [\mathbf{a},\mathbf{b}] = \mathbf{a} \otimes \mathbf{b} - \mathbf{b} \otimes \mathbf{a}, \qquad \mathbf{a}, \mathbf{b} \in T(\R^d),
\end{equation*}
is called the \emph{free Lie algebra} $\mathfrak{g}(\R^d)$ over $\R^d$, see e.g.~\cite[Section~7.3]{Friz2010}. It is a subalgebra of $T_0((\R^d))$, where we define for $c \in \R$, the tensor subalgebra $T_c((\R^d)) := \{\mathbf{a} = (a^{(n)})_{n=0}^\infty \in T((\R^d)): a^{(0)} = c\}$. 

The \emph{free Lie group} $G((\R^d)) := \exp(\mathfrak{g}(\R^d))$ is defined as the tensor exponential of $\mathfrak{g}(\R^d)$, i.e., its image under the map
\begin{equation*}
  \exp_{\otimes} \colon T_0((\R^d)) \to T((\R^d)), \qquad \mathbf{a} \mapsto 1 + \sum_{k=1}^{\infty} \frac{1}{k!} \mathbf{a}^{\otimes k}.
\end{equation*}
$G((\R^d))$ is a subgroup of $T_1((\R^d))$. In fact, $(G((\R^d)),\otimes)$ is a group with unit element $(1,0,\dots,0,\dots)$, and for all $\mathbf{g} = \exp_{\otimes}(\mathbf{a}) \in G((\R^d))$, the inverse with respect to $\otimes$ is given by $\mathbf{g}^{-1} = \exp_{\otimes}(-\mathbf{a})$, for $\mathbf{g} = \exp_{\otimes}(\mathbf{a}) \in G((\R^d))$. We call elements in $G((\R^d))$ \emph{group-like} elements. 

For $N \in \N$, the \emph{truncated tensor algebra} on $\R^d$ is defined by
\begin{equation*}
  T^N(\R^d) := \bigoplus_{n=0}^N (\R^d)^{\otimes n}.
\end{equation*}
For any $\mathbf{a} = (a^{(n)})_{n=0}^N \in T^N(\R^d)$, we set 
\begin{equation*}
  |\mathbf{a}|_{T^N(\R^d)} := \max_{n=0,\dots,N} |a^{(n)}|_{(\R^d)^{\otimes n}},
\end{equation*}
where we write $|\cdot|$ for the Euclidean norm, on $\R^d$ or $(\R^d)^{\otimes n}$ for some $n \in \N$. We consider the maps $\Pi_{(n)} \colon T((\R^d)) \to (\R^d)^{\otimes n}$ and $\Pi_{N} \colon T((\R^d)) \to T^N(\R^d)$, where $\Pi_{(n)}(\mathbf{a}) = a^{(n)}$ and $\Pi_{N}(\mathbf{a}) = (a^{(0)}, \dots, a^{(N)})$, for $\mathbf{a} = (a^{(n)})_{n=0}^\infty \in T((\R^d))$. We set for $c \in \R$, $T_c^N(\R^d) := \{\Pi_{N}(\mathbf{a}): \mathbf{a} \in T_c((\R^d))\}$. Then $T_1^N(\R^d)$ is a Lie group under the tensor multiplication $\otimes$, truncated at level $N$. We equip $T_1^N(\R^d)$ with the metric 
\begin{equation*}
  \rho(\mathbf{a},\mathbf{b}) := |\mathbf{a} - \mathbf{b}|_{T^N(\R^d)} = \max_{n=1,\ldots,N}|(a-b)^{(n)}|_{(\R^d)^{\otimes n}},
\end{equation*}
for $\mathbf{a}=(a^{(n)})_{n=0}^N, \mathbf{b}=(b^{(n)})_{n=0}^N \in T_1^N(\R^d)$, which arises from the norm on $T^N(\R^d)$.

The \emph{free nilpotent Lie algebra} and the \emph{free nilpotent Lie group of order $N$} are defined by $\mathfrak{g}^N(\R^d) := \Pi_{N}(\mathfrak{g}(\R^d))$ and $G^N(\R^d) := \Pi_{N}(G((\R^d)))$, respectively. That is, 
\begin{equation*}
  \mathfrak{g}^N(\R^d) = \{0\} \oplus \R^d \oplus [\R^d,\R^d] \oplus \dots \oplus \underbrace{[\R^d,[\R^d,\dots[\R^d,\R^d]]]}_{\text{$N-1$ brackets}} \subseteq T_0^N(\R^d).
\end{equation*}
Then $G^N(\R^d)$ is a subgroup of $T_1^N(\R^d)$ with respect to $\otimes$.

Defining the truncated tensor exponential via the corresponding (finite) power series in the truncated tensor algebra, we have that $G^N(\R^d) = \exp_\otimes^N(\mathfrak{g}^N(\R^d))$.

\medskip 

Now, let $I = (i_1, \ldots, i_{|I|})$ be a multi-index (with entries in $\{1, \dots, d\}$) of length $|I|$. We recall the canonical basis $(e_1, \dots, e_d)$ of $\R^d$, and set $e_I := e_{i_1} \otimes \dots \otimes e_{i_{|I|}}$. If $|I|=1$, set $I' = \emptyset$, if $|I| \geq 1$, $I' = (i_1, \ldots, i_{|I|-1})$. Moreover, we denote by $e_\emptyset$ the basis element of $(\R^d)^{\otimes 0}$ and set $|\emptyset| := 0$. This allows to write $\mathbf{a} \in T((\R^d))$ (and $\mathbf{a} \in T(\R^d)$) as
\begin{equation*}
  \mathbf{a} = \sum_{|I| \geq 0} a_I e_I,
\end{equation*}
for some $a_I \in \R$.

Furthermore, for $\mathbf{a} \in T(\R^d)$ and $\mathbf{b} \in T((\R^d))$, we set
\begin{equation*}
  \langle \mathbf{a}, \mathbf{b} \rangle := \sum_{|I| \geq 0} \langle a_I, b_I \rangle.
\end{equation*}
where $\langle \cdot,\cdot\rangle$ is defined as the inner product of $(\R^d)^{\otimes n}$ for each $n\ge 0$. Then $(e_I)_{\{I: |I|=n\}}$ is the canonical orthonormal basis of $(\R^d)^{\otimes n}$ with respect to this inner product and we write $a_I:=\langle e_I,\mathbf{a}\rangle := \langle e_I, \Pi_{(|I|)}(\mathbf{a}) \rangle$.

Associating $\ell \in T(\R^d)$ with a linear functional $\langle \ell, \cdot \rangle \colon T((\R^d)) \to \R$, we write
\begin{equation*}
  \langle \ell, \mathbf{a} \rangle := \sum_{0 \leq |I| \leq N} \ell_I \langle e_I, \mathbf{a} \rangle, \qquad \mathbf{a} \in T((\R^d)),
\end{equation*}
for $\ell = \sum_{0 \leq |I| \leq N} \ell_I e_I$, where $\ell_I := \langle e_I, \ell \rangle \in \R$ and $N \in \N_0$.

\medskip 

For two multi-indices $I = (i_1, \ldots, i_{|I|})$, $J = (j_1, \ldots, j_{|J|})$ with entries in $\{1,\ldots,d\}$, the \emph{shuffle product} is recursively defined by
\begin{equation*}
  e_I \shuffle e_J := (e_{I'} \shuffle e_J) \otimes e_{i_{|I|}} + (e_I \shuffle e_{J'}) \otimes e_{j_{|J|}},
\end{equation*}
with $e_I \shuffle e_\emptyset := e_\emptyset \shuffle e_I := e_I$. For $\mathbf{a}, \mathbf{b} \in T(\R^d)$, we set
\begin{equation*}
  a \shuffle b = \sum_{|I|, |J| \geq 0} a_I b_J (e_I \shuffle e_J)
\end{equation*}
and for $\mathbf{a}, \mathbf{b} \in T((\R^d))$, we set
\begin{equation*}
  \langle e_I, \mathbf{a} \shuffle \mathbf{b} \rangle = \langle e_I, \Pi_{(|I|)}(\mathbf{a}) \Pi_{(|I|)}(\mathbf{b}) \rangle.
\end{equation*}
For all $\mathbf{a} \in G((\R^d))$, the \emph{shuffle product property} holds, i.e., for two multi-indices $I = (i_1, \ldots, i_{|I|})$, $J = (j_1, \ldots, j_{|J|})$, it holds that
\begin{equation*}
  \langle e_I, \mathbf{a} \rangle \langle e_J, \mathbf{a} \rangle = \langle e_I \shuffle e_J, \mathbf{a} \rangle.
\end{equation*}

\subsection{Essentials on rough path theory}

A \emph{partition} $\mathcal{P}$ of an interval $[s,t]$ is a finite set of points between and including the points $s$ and $t$, i.e., $\mathcal{P} = \{s = u_0 < u_1 < \cdots < u_N = t\}$ for some $N \in \N$, and its mesh size is denoted by $|\mathcal{P}|:= \max\{|u_{i+1} - u_i| \, : \, i = 0, \ldots, N-1\}$.

Throughout, we let $T > 0$ be a fixed finite time horizon. We let $\Delta_T := \{(s,t) \in [0,T]^2 \, : \, s \leq t\}$ denote the standard $2$-simplex.

We shall write $a \lesssim b$ to mean that there exists a constant $C > 0$ such that $a \leq Cb$. The constant $C$ may depend on the normed space, e.g.~through its dimension or regularity parameters.

For a normed space $(E,|\cdot|)$, we let $C([0,T];E)$ denote the set of continuous paths from $[0,T]$ to $E$. For $X \in C([0,T];E)$, the supremum seminorm of the path~$X$ is given by 
\begin{equation*}
  \| X\|_{\infty}:= \sup_{t\in [0,T]} |X_t|,
\end{equation*}
and for $p \geq 1$, the $p$-variation of the path $X$ is given by
\begin{equation*}
  \|X\|_p := \|X\|_{p,[0,T]} \qquad \text{with} \qquad \|X\|_{p,[s,t]} := \bigg(\sup_{\mathcal{P}\subset[s,t]} \sum_{[u,v]\in \mathcal{P}} |X_v - X_u|^p \bigg)^{\frac{1}{p}}, \quad (s,t) \in \Delta_T,
\end{equation*}
where the supremum is taken over all possible partitions $\mathcal{P}$ of the interval $[s,t]$. We recall that, given a path $X$, we have that $\|X\|_p < \infty$ if and only if there exists a control function $c$ such that\footnote{Here and throughout, we adopt the convention that $\frac{0}{0} := 0$.}
\begin{equation*}
  \sup_{(u,v) \in \Delta_T} \frac{|X_v - X_u|^p}{c(u,v)} < \infty.
\end{equation*}
We write $C^{p\textup{-var}} = C^{p\textup{-var}}([0,T];E)$ for the space of paths $X \in C([0,T];E)$ which satisfy $\|X\|_p < \infty$. Moreover, for a path $X \in C([0,T];\R^d)$, we will often use the shorthand notation:
\begin{equation*}
  X_{s,t} := X_t - X_s, \qquad \text{for} \quad (s,t) \in \Delta_T.
\end{equation*}
For $r \geq 1$ and a two-parameter function $\X^{(2)} \colon \Delta_T \to E$, we further define
\begin{equation*}
  \|\X^{(2)}\|_r := \|\X^{(2)}\|_{r,[0,T]} \qquad \text{with} \qquad \|\X^{(2)}\|_{r,[s,t]} := \bigg(\sup_{\mathcal{P} \subset [s,t]} \sum_{[u,v] \in \mathcal{P}} |\X^{(2)}_{u,v}|^r\bigg)^{\frac{1}{r}}, \quad (s,t) \in \Delta_T.
\end{equation*}
We write $C_2^{r\textup{-var}} = C_2^{r\textup{-var}}(\Delta_T;E)$ for the space of continuous functions $\X^{(2)} \colon \Delta_T \to E$ which satisfy $\|\X^{(2)}\|_r < \infty$.

\medskip 

For $p \in [2,3)$, a pair $\bX = (X,\X^{(2)})$ is called a \emph{(continuous) rough path} over $\R^d$ if
\begin{enumerate}
  \item[(i)] $X \in C^{p\textup{-var}}([0,T];\R^d)$ and $\X^{(2)} \in C_2^{\p\textup{-var}}(\Delta_T;\R^{d \times d})$, and
  \item[(ii)] Chen's relation: $\X^{(2)}_{s,t} = \X^{(2)}_{s,u} + \X^{(2)}_{u,t} + X_{s,u} \otimes X_{u,t}$ holds for all $0 \leq s \leq u \leq t \leq T$.
\end{enumerate}
In component form, condition (ii) states that $(\X^{(2)})^{ij}_{s,t} = (\X^{(2)})^{ij}_{s,u} + (\X^{(2)})^{ij}_{u,t} + X^i_{s,u} X^j_{u,t}$ for every $i$ and $j$. We will denote the space of rough paths by $\cC^p = \cC^p([0,T];\R^d)$. On the space $\cC^p([0,T];\R^d)$, we use the natural seminorm
\begin{equation*}
  \|\bX\|_{p} := \|\bX\|_{p,[0,T]} \qquad \text{with} \qquad \|\bX\|_{p,[s,t]} := \|X\|_{p,[s,t]} + \|\X^{(2)}\|_{\p,[s,t]}
\end{equation*}
for $(s,t) \in \Delta_T$.

\medskip

Let $p \in (2,3)$ and $q > 0$ such that $2/p + 1/q > 1$, and $X \in C^{p\text{-var}}([0,T];\R^d)$. We say that a pair $(Y,Y')$ is a \emph{controlled path} (with respect to $X$), if
\begin{equation*}
  Y \in C^{p\text{-var}}([0,T];\R^{d\times n}), \quad Y' \in C^{q\text{-var}}([0,T];\mathcal{L}(\R^d;\R^{d\times n})), \quad \text{and} \quad R^Y \in C^{r\text{-var}}_2(\Delta_T;\R^{d\times n}),
\end{equation*}
where $R^Y$ is defined by
\begin{equation*}
  Y_{s,t} = Y'_s X_{s,t} + R^Y_{s,t} \qquad \text{for all} \quad (s,t) \in \Delta_T,
\end{equation*}
and $1/r = 1/p + 1/q$. We write $\mathscr{C}^{p,q}_X = \mathscr{C}^{p,q}_X([0,T];\R^{d\times n})$ for the space of $\R^{d\times n}$-valued controlled paths, which becomes a Banach space when equipped with the norm
\begin{equation*}
  (Y,Y') \mapsto |Y_0| + |Y'_0| + \|Y'\|_{q,[0,T]} + \|R^Y\|_{r,[0,T]}.
\end{equation*}
When $p = q$, $r = \p$, we write $\mathscr{C}^p_X = \mathscr{C}^{p,\p}_X$.

\medskip

Further, for $p \geq 1$, and $N \in \N$, the $p$-variation of $\X^{N} \colon [0,T] \to T^N(\R^d)$ is given by
\begin{equation*}
  \|\X^{N}\|_{p,[s,t]} := \max_{1 \leq m \leq N} \sup_{\cP \subset [s,t]} \bigg (\sum_{[u,v] \in \cP} |\Pi_{(m)}(\X^{N}_{u,v})|^{\frac{p}{m}}\bigg)^{\frac{m}{p}}, \qquad (s,t) \in \Delta_T,
\end{equation*}
where now $\X^{N}_{s,t} := (\X^{ N}_s)^{-1} \otimes \X^{ N}_t$, $(s,t) \in \Delta_T$, and we write $\|\X^{N}\|_{p} := \|\X^{N}\|_{p,[0,T]}$.

For $\X^{N}, \tbbX^{N} \colon [0,T] \to T^N(\R^d)$, we define the $p$-variation distance
\begin{equation*}
  \|\X^{N}; \tbbX^{N}\|_{p,[s,t]} := \|\X^{ N} - \tbbX^{N}\|_{p,[s,t]}, \qquad (s,t) \in \Delta_T,
\end{equation*}
and we write $\|\X^{N}; \tbbX^{N}\|_{p} = \|\X^{N}; \tbbX^{N}\|_{p,[0,T]}$.

Moreover, a continuous function $\X^{N} \colon [0,T]\to T^N(\R^d)$ is called a \emph{multiplicative functional} if $\X^0_{s,t} = 1$ and Chen's relation holds:
\begin{equation*}
  \X^{N}_{s,u}\otimes \X^{N}_{u,t}=\X^{N}_{s,t},
  \qquad 0\le s\le u\le t\le T.
\end{equation*}

\medskip

We equip $G^N(\R^d)$ with the (inhomogeneous) subspace topology of $T^N(\R^d)$. In the literature, the (homogeneous) $p$-variation of a $G^{N}(\R^d)$-valued path is often defined in terms of the Carnot--Carath{\'e}odory metric, see e.g.~\cite[Chapter~8]{Friz2010}. This is consistent because the induced topology on $G^N(\R^d)$ coincides with the one induced by the Carnot--Carath{\'e}odory metric, see e.g.~\cite[Section~8.1.2 and 8.1.3]{Friz2010}.

\medskip

A continuous path $\X^{\lfloor p \rfloor} \colon [0,T] \to G^{\lfloor p \rfloor}(\R^d)$ is called a \emph{weakly geometric rough path}, if $\X^{\lfloor p \rfloor}_0 = \mathbf{1}$ and $\|\mathbf{1};\X^{\lfloor p \rfloor}\|_{p} < \infty$, where $\mathbf{1} := (1,0,\dots,0) \in T^{\lfloor p \rfloor}(\R^d)$. We will denote the space of weakly geometric continuous rough paths by $C_o^{p\textup{-var}} = C_o^{p\textup{-var}}([0,T];G^{\lfloor p \rfloor}(\R^d))$ and equip it with the distance $\|\cdot \hspace{1pt}; \cdot\|_p$.

\medskip

An algebraic condition for a rough path to be weakly geometric is that the symmetric part of the rough path lift is determined by the increments of the path.

\begin{lemma}\label{lemma: weakly geometric rough path}
  Let $p \in (2,3)$. Let $(X,\X^{(2)}) \in \cC^p([0,T];\R^d)$ be a continuous rough path such that $\mathbb{S}(\X^{(2)}_{0,t}) = \frac{1}{2} X_{0,t} \otimes X_{0,t}$, $t \in [0,T]$, where we consider the decomposition into the symmetric and the antisymmetric part given by 
  \begin{equation*}
    \X^{(2)}_{0,t} = \mathbb{S}(\X^{(2)}_{0,t}) + \mathbb{A}(\X^{(2)}_{0,t}) := \frac{1}{2} (\X^{(2)}_{0,t} + (\X^{(2)}_{0,t})^\top) + \frac{1}{2} (\X^{(2)}_{0,t} - (\X^{(2)}_{0,t})^\top),
  \end{equation*}
  where $(\cdot)^\top$ denotes matrix transposition. Then $\X^{2}$ is a weakly geometric rough path, i.e., $\X^{2} \in C^{p\textup{-var}}_o$, where $\X^{2}$ is defined by 
  \begin{equation*}
	\X^{2}_t := (1, X_{0,t}, \X^{(2)}_{0,t}), \qquad t \in [0,T].
  \end{equation*}
\end{lemma}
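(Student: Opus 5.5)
We need three things: $\X^2_t \in G^2(\R^d)$ for every $t$, $\X^2_0 = \mathbf{1}$, and $\|\mathbf{1};\X^2\|_p<\infty$ in the $p$-variation defined via the increments $\X^2_{s,t} = (\X^2_s)^{-1}\otimes\X^2_t$. The approach is to identify $G^2(\R^d)$ algebraically, use Chen's relation to express the increments $\X^2_{s,t}$ through $(X_{s,t},\X^{(2)}_{s,t})$, and then read off the variation bounds from the rough path norms.

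\emph{Step 1 (group-valuedness).} We use the standard characterization
\[
G^2(\R^d)=\{(1,a,b)\in T^2_1(\R^d):\ \mathbb{S}(b)=\tfrac12 a\otimes a\}.
\]
Indeed, $\exp^2_\otimes(0,a,\ell)=(1,a,\ell+\frac12 a\otimes a)$ with $\ell\in[\R^d,\R^d]$, and $[\R^d,\R^d]$ is exactly the space of antisymmetric matrices. Conversely, for $b$ with $\mathbb{S}(b)=\frac12 a\otimes a$, take $\ell=\mathbb{A}(b)$. By the hypothesis applied with $a=X_{0,t}$ and $b=\X^{(2)}_{0,t}$, this gives $\X^2_t\in G^2(\R^d)$. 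Moreover $\X^2_0=(1,0,\X^{(2)}_{0,0})$, and Chen's relation with $s=u=t=0$ forces $\X^{(2)}_{0,0}=0$, so $\X^2_0=\mathbf{1}$. Continuity of $t\mapsto \X^2_t$ follows from continuity of $X$ and of $\X^{(2)}$ on $\Delta_T$.

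\emph{Step 2 (increments).} The inverse is $(1,a,b)^{-1}=(1,-a,-b+a\otimes a)$. A direct computation then gives
\[
(\X^2_s)^{-1}\otimes\X^2_t=\bigl(1,\ X_{s,t},\ \X^{(2)}_{0,t}-\X^{(2)}_{0,s}-X_{0,s}\otimes X_{s,t}\bigr).
\]
By Chen's relation on $0\le s\le t$, we have $\X^{(2)}_{0,t}=\X^{(2)}_{0,s}+\X^{(2)}_{s,t}+X_{0,s}\otimes X_{s,t}$, so the second level equals $\X^{(2)}_{s,t}$. Thus $\X^2_{s,t}=(1,X_{s,t},\X^{(2)}_{s,t})$.

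\emph{Step 3 (variation).} By definition, $\|\mathbf{1};\X^2\|_p=\|\X^2-\mathbf{1}\|_p$, interpreted through the increments. With the level-wise exponents $p/m$, this is the maximum of $\|X\|_p$ and $\|\X^{(2)}\|_{p/2}$, and both are finite since $(X,\X^{(2)})\in\cC^p$. Hence $\X^2\in C^{p\text{-var}}_o$.

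The only delicate point is Step 1: one must check carefully that $\Pi_2(G((\R^d)))$ coincides with the set described above. That is, the level-two Lie component $[\R^d,\R^d]$ must equal all antisymmetric matrices, which holds since $e_i\otimes e_j-e_j\otimes e_i$ span them, and higher brackets do not contribute after truncation at level two. The remaining steps are routine algebra.
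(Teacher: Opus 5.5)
Your proof is correct and follows the same route as the paper. You identify $[\R^d,\R^d]$ with the antisymmetric matrices, so that $\X^2_t=\exp^2_\otimes(0,X_{0,t},\mathbb{A}(\X^{(2)}_{0,t}))\in G^2(\R^d)$, and you read off finiteness of the $p$-variation from $(X,\X^{(2)})\in\cC^p$. Your checks that $\X^2_0=\mathbf{1}$ and that $(\X^2_s)^{-1}\otimes\X^2_t=(1,X_{s,t},\X^{(2)}_{s,t})$ via Chen's relation simply spell out what the paper compresses into its final sentence.
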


\begin{proof}
  Recall that $G^2(\R^d) = \exp_\otimes^2(\mathfrak{g}^2(\R^d))$, where $\mathfrak{g}^2(\R^d) = \{0\} \oplus \R^d \oplus [\R^d,\R^d]$. It holds that $[\R^d, \R^d] = \spn \{e_i \otimes e_j - e_j \otimes e_i: 1 \leq i, j \leq d \}$. Therefore $[\R^d,\R^d]$ equals the set of antisymmetric $d \times d$-matrices and it follows that, for any $t \in [0,T]$,
  \begin{equation*}
    \X^{2}_t = \bigg(1, X_{0,t}, \frac{1}{2} X_{0,t} \otimes X_{0,t} + \mathbb{A}(\X^{(2)}_{0,t})\bigg) = \exp_\otimes^2(0, X_{0,t}, \mathbb{A}(\X^{(2)}_{0,t})) \in G^2(\R^d).
  \end{equation*}
  Finally, since $(X,\X^{(2)}) \in \cC^p([0,T];\R^d)$, it particularly holds that $\|\mathbf{1};\X^{2}\|_p < \infty$.
\end{proof}

\begin{remark}
  This condition is a consequence of ``first order calculus'' and therefore valid in the context of stochastic Stratonovich integration.
\end{remark}

\subsection{Definition and properties of signatures}\label{subsec: signature of non-weakly geometric rough paths}

\sloppy By Lyons' extension theorem, see e.g.~\cite[Theorem~3.7]{Lyons2007}, any multiplicative functional $\X^m\colon [0,T]\to T^m(\R^d)$ with finite $p$-variation for $m \geq \lfloor p \rfloor$---i.e., $\X^{(i)}$, $i \leq m$, is of finite $p/i$-variation, controlled by a control function $c$---admits a unique extension to a multiplicative functional $\X\colon [0,T]\to T((\R^{d}))$ with finite $p$-variation, for $p \ge 1$. More precisely, for any $N > \lfloor p \rfloor$, there exists a unique continuous function $\X^{(N)}\colon[0,T]\to (\R^d)^{\otimes N}$ such that,
\begin{equation*}
  (1,X_{0,\cdot,},\X^{(2)},\ldots,\X^{(\lfloor p\rfloor)},\ldots,\X^{(N)},\ldots)\in T((\R^d))
\end{equation*}
is a multiplicative functional with finite $p$-variation, i.e., $\Pi_N(\X)$ is of finite $p$-variation for any $N$, controlled by $c$, and is called \emph{Lyons' extension}.

In particular, any weakly geometric rough path admits a unique extension to a path of finite $p$-variation with values in $G^N(\R^d)$ with $N > \lfloor p\rfloor$, see e.g.~\cite[Theorem~9.5]{Friz2010}, which allows us to define the signature of $X$ as follows:

\begin{definition}\label{def: Lyons' lift of weakly geometric rough path}
  Let $p\ge 1$ and $\X^{o,\lfloor p \rfloor} \in C_o^{p\textup{-var}}([0,T];G^{\lfloor p \rfloor}(\R^d))$. The \emph{signature} of $X$ is defined as the unique path
  \begin{equation*}
    \X^o \colon [0,T] \to G((\R^d)), 
  \end{equation*}
  such that for all $N > \lfloor p \rfloor$, $\Pi_{N}(\X^o) = \X^{o,N}$, where $\X^{o, N}$ denotes the extension of $\X^{o,\lfloor p \rfloor}$ in $G^N(\R^d)$. In particular, $\X^o$ is the unique path extension of $\X^{o,\lfloor p \rfloor}$ specified by Lyons' extension theorem.
\end{definition}

Now, let $\bX = (X,\X^{(2)}) \in \cC^p([0,T];\R^d)$, $p\in(2,3)$, be a rough path, and let $\X$ be Lyons' extension of $\bX$ to $T((\R^d))$, i.e., $\X \colon [0,T] \to T((\R^d))$, which by Proposition~\ref{prop: Lyons extension coincides with iterated integrals}  coincides with the collection of iterated rough integrals of controlled paths with respect to $\bX$, that is, for $N \geq 3$,
\begin{equation*}
  \X_{s,t}^{(N)}=\int_s^t\X_{s,r}^{(N-1)} \otimes \d \bX_r \qquad (s,t) \in \Delta_T.
\end{equation*}
The rough integral is defined in Lemma~\ref{lemma: integration against controlled path} and Remark~\ref{rem: tensored controlled path}, considering $G = X$ so that $G' = I_d$ is the identity matrix and $R^G = 0$, and the integral reduces to the classical notion of the rough integral of the controlled path $(F,F')$ against the rough path $\bX$.

Further, let $[\bX]_t := X_{0,t} \otimes X_{0,t} - (\X^{(2)}_{0,t} + (\X^{(2)}_{0,t})^\top)$, $t \in [0,T]$, be the rough path bracket of $\bX$, and set
\begin{equation*}
  Q(X) := ([\bX]^{11}, \dots, [\bX]^{1d}, [\bX]^{22}, \dots, [\bX]^{2d}, \dots, [\bX]^{dd}).
\end{equation*}

We define $\hX := (\cdot,X,Q(X)) \in C^{p\textup{-var}}([0,T];\R^\hd)$, with $\hd=1+d+\frac{d(d+1)}{2}$, and note that $\hX$ can be canonically lifted to a rough path $\widehat{\bX} = (\hX,\hbbX^{(2)}) \in \cC^p([0,T];\R^\hd)$ since $Q(X)$ is a path of finite $p/2$-variation so that $(\hbbX^{(2)}_{s,t})^{ij} := \int_s^t \hX^i_{s,r} \dd \hX^j_r$, for $i,j \notin \{1,\dots,d\}$, exist as Young integrals, and $(\hbbX^{(2)}_{s,t})^{ij} :=(\X^{(2)}_{s,t})^{ij}$, for $i,j = 1, \dots, d$. We denote by $\hbbX$ Lyons' extension of $\hbX$.

We write $(e_0, e_1, \ldots, e_d, \epsilon_{11}, \ldots, \epsilon_{1d}, \epsilon_{22}, \ldots, \epsilon_{2d}, \ldots, \epsilon_{dd})$ $:=(e_0, e_1, \ldots, e_d, e_{d+1}, \ldots, e_{\hd-1})$ for the canonical basis of $\R^\hd$, i.e., we use the index $0$ to denote the time component, and $\epsilon_{ij}$ for the component of $\hX$ referring to $[\bX]^{ij}$, so that $\langle \epsilon_{ij}, \hbbX_t \rangle := [\bX]^{ij}_t$, $i, j = 1, \ldots, d$, $i \leq j$, $t \in [0,T]$. We set $\epsilon_{ji}:=\epsilon_{ij}$ and observe that $\langle e_I\otimes e_i,\hbbX_t\rangle=\int_0^t\langle e_{I},\hbbX_s\rangle\dd \hX_s^{i}$, for $i\in \{0,d+1,\ldots,{\hd-1}\}$, is well-defined as an integral with respect to $\hX^i$.

We further note that $t \mapsto \langle e_0, \hbbX_t \rangle$ is strictly monotonically increasing. This is necessary so that $\hbbX_T$ uniquely characterizes $\hbX$, see e.g.~\cite{Hambly2010,Boedihardjo2016}.

See the proof of condition~(iii) in Theorem~\ref{thm: UAT geometric} for a similar argument for signatures $\hbbX^{o}$ of time extended weakly geometric rough paths, and the proof of condition (iii) in Theorem~\ref{thm: UAT non-geometric} for signatures $\hbbX$ of general rough paths extended by time and the rough path bracket terms.

Further, let $\hbbX^o$ be Lyons' extension of $(1,\hX,\hbbX^{(2)} + \frac{1}{2}[\hbX])$, that then is a group-like valued path, i.e., $\hbbX^o \colon [0,T] \to G((\R^{\hd}))$, see Definition~\ref{def: Lyons' lift of weakly geometric rough path}.

\medskip

Extending the path $X$ to $\hX$ by the rough path bracket terms yields that the components of the signature of the non-weakly geometric rough path $\hbbX$ can be represented as linear functionals of the signature of the weakly geometric rough path $\hbbX^o$.

\begin{proposition}\label{prop: linear functional}
  Let $\bX = (X,\X^{(2)}) \in \cC^p([0,T];\R^d)$, $p \in (2,3)$, be a rough path, and $\hX := (\cdot,X,Q(X))$ be the path extended by time and the rough path bracket terms, and $\hbX = (\hX,\hbbX^{(2)}) \in \cC^p([0,T];\R^{\hd})$ be the corresponding rough path. Further, let $\hbbX$ be Lyons' extension of $(1,\hX,\hbbX^{(2)}) \in C_o^{p\textup{-var}}([0,T];T^2(\R^{\hd}))$. Then, for any multi-index $I$, there exists $\ell^{I} \in T(\R^\hd)$ such that 
  \begin{equation*}
    \langle e_I, \hbbX_t \rangle = \langle \ell^{I}, \hbbX^o_t \rangle, \qquad t \in [0,T],
  \end{equation*}
  where $\hbbX^o \colon [0,T] \to G((\R^{\hd}))$ denotes the group-like valued path that is Lyons' extension of $(1,\hX,\hbbX^{(2)} + \frac{1}{2}[\hbX]) \in C_o^{p\textup{-var}}([0,T];G^2(\R^{\hd}))$, and $\ell^I$ is recursively defined by 
  \begin{equation*}
    \ell^I := \ell^{I'} \otimes e_{i_{|I|}} - \frac{1}{2} \ell^{(I')'} \otimes \epsilon_{i_{|I'|}i_{|I|}},
  \end{equation*}
  with $\ell^\emptyset := e_\emptyset$, $\ell^{(i_1)} := e_{(i_1)}$, and $\epsilon_{i_{|I'|}i_{|I|}} := 0$, for $i_{|I'|}, i_{|I|} \notin \{1, \dots, d\}$.
\end{proposition}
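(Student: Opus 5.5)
We argue by induction on the length $|I|$, using the integral representation of the signature components. The cases $|I|\le 1$ are immediate, since both $\hbbX$ and $\hbbX^o$ have first level $\hX_{0,t}$, so $\ell^{(i_1)}=e_{(i_1)}$ works. The whole argument rests on comparing the two ways in which the last letter is integrated.

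For the signature $\hbbX$ of the non-geometric rough path, Proposition~\ref{prop: Lyons extension coincides with iterated integrals} gives
\begin{equation*}
\langle e_I,\hbbX_t\rangle=\int_0^t \langle e_{I'},\hbbX_s\rangle \dd \hbX^{i_{|I|}}_s,
\end{equation*}
where the integral is a rough integral of the controlled path $s\mapsto\langle e_{I'},\hbbX_s\rangle$. This path has Gubinelli derivative $\langle e_{I''},\hbbX_s\rangle e_{i_{|I'|}}$, writing $I''=(I')'$. For $\hbbX^o$ the analogous identity holds with the geometric lift $\hbbX^{o,(2)}=\hbbX^{(2)}+\frac12[\hbX]$.

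The key step is a rough It\^o--Stratonovich conversion formula. Let $(Y,Y')$ be controlled by $\hX$. The rough integrals against $\hbX$ and against $\hbX^o$ differ only in their second-order compensator, which is $Y'_u\hbbX^{(2)}_{u,v}$ versus $Y'_u\hbbX^{o,(2)}_{u,v}$. Hence
\begin{equation*}
\int Y\dd\hbX^o=\int Y\dd\hbX+\tfrac12\int Y'\dd[\hbX],
\end{equation*}
where the last integral is a Young integral, because $[\hbX]$ has finite $p/2$-variation. The only nonzero entries of $[\hbX]$ are $[\hbX]^{ij}=[\bX]^{ij}=\langle\epsilon_{ij},\hX\rangle$ for $i,j\in\{1,\dots,d\}$. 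All entries involving time or bracket components vanish, since those components have finite $p/2$-variation and their cross-integrals are Young integrals, for which the symmetric part equals $\frac12 \hX\otimes\hX$. Therefore
\begin{equation*}
\tfrac12\int Y'\dd[\hbX]^{\cdot j}=\tfrac12\sum_i\int Y'^{i}\dd\hX^{\epsilon_{ij}}.
\end{equation*}

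Inserting this with $Y=\langle e_{I'},\hbbX\rangle$ gives
\begin{equation*}
\langle e_I,\hbbX_t\rangle=\int_0^t\langle e_{I'},\hbbX\rangle\dd\hbX^{o,i_{|I|}}-\tfrac12\int_0^t\langle e_{I''},\hbbX\rangle\dd\hX^{\epsilon_{i_{|I'|}i_{|I|}}},
\end{equation*}
with the correction vanishing unless both indices lie in $\{1,\dots,d\}$. Now apply the induction hypothesis, $\langle e_{I'},\hbbX\rangle=\langle\ell^{I'},\hbbX^o\rangle$ and $\langle e_{I''},\hbbX\rangle=\langle\ell^{I''},\hbbX^o\rangle$. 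Use also that for the group-like path $\int_0^t\langle e_J,\hbbX^o\rangle\dd\hbX^{o,k}=\langle e_J\otimes e_k,\hbbX^o_t\rangle$, extended linearly in $J$; for Young components this holds trivially. Together these yield $\langle \ell^{I'}\otimes e_{i_{|I|}}-\frac12\ell^{I''}\otimes\epsilon_{i_{|I'|}i_{|I|}},\hbbX^o_t\rangle$, which is exactly $\ell^I$.

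The main obstacle is justifying the conversion formula rigorously. Both integrals must be taken with the same controlled structure: one has to check that the Gubinelli derivative of $\langle\ell^{I'},\hbbX^o\rangle$ with respect to $\hX$ coincides with that of $\langle e_{I'},\hbbX\rangle$. This holds because the two are equal paths, and derivatives are unique modulo the usual non-degeneracy caveat, which is avoided by working directly with the canonical derivatives. One also has to verify convergence of the compensated Riemann sums to the same limit up to the Young term. We would do this through Lemma~\ref{lemma: integration against controlled path}, by comparing the local expansions and noting that the difference of germs $\frac12 Y'_u[\hbX]_{u,v}$ sums to the Young integral.
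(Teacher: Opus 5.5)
Your argument follows the paper's route. The paper also writes $\langle e_I,\hbbX_t\rangle$ as the compensated Riemann sum in the last letter, substitutes $\hbbX^{(2)}=\hbbX^{o,(2)}-\frac{1}{2}[\hbX]$, applies the induction hypothesis to $\langle e_{I'},\hbbX\rangle$ and $\langle e_{(I')'},\hbbX\rangle$, and identifies the Young correction as an $\epsilon$-component of $\hbbX^o$. Your conversion formula is exactly this substitution, and your remark that $[\hbX]$ vanishes outside the $\{1,\dots,d\}$-block is correct.

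The justification you give for the step you call the main obstacle is wrong, though the step itself can be rescued. The two Gubinelli derivatives do not coincide. Applying Proposition~\ref{prop: Lyons extension coincides with iterated integrals} linearly gives $\int\langle\ell^{I'},\hbbX^o\rangle\dd\hbX^{o,i_{|I|}}=\langle\ell^{I'}\otimes e_{i_{|I|}},\hbbX^o\rangle$ with derivative
\[
\langle\ell^{(I')'},\hbbX^o\rangle\, e_{i_{|I'|}}-\frac{1}{2}\langle\ell^{((I')')'},\hbbX^o\rangle\,\epsilon_{i_{|(I')'|}\,i_{|I'|}},
\]
whereas the derivative inherited from $\langle e_{I'},\hbbX\rangle$ is only the first summand. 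No uniqueness argument reconciles them. The path $\hX$ is not truly rough: its time and bracket components have finite $p/2$-variation, so Gubinelli derivatives in those directions are not determined by the path.

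What closes the step is that the rough integral is insensitive to those directions. For $k$ a time or bracket index, $(\hbbX^{o,(2)}_{u,v})^{kj}=\int_u^v\hX^k_{u,r}\dd\hX^j_r$ is a Young integral, bounded by a constant times $c(u,v)^{3/p}$ with $3/p>1$. Hence the extra compensator terms $\frac{1}{2}\langle\ell^{((I')')'},\hbbX^o_u\rangle(\hbbX^{o,(2)}_{u,v})^{k\,i_{|I|}}$, with $e_k=\epsilon_{i_{|(I')'|}i_{|I'|}}$, sum to zero as the mesh vanishes. The paper uses this same fact implicitly when it splits $\ell^{I'}$ inside the Riemann sum and treats the $\epsilon$-piece without a compensator. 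With this replacement your proof is complete.
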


\begin{proof}
  We show the statement for $t = T$. For $|I| = 0$ and $|I| = 1$, considering $\ell^{I} := e_I$, we have that $\langle e_I, \hbbX_T \rangle = \langle \ell^I, \hbbX^o_T \rangle$ by definition of Lyons' extension.
	
  For $|I| = 2$, i.e., $I = (i_1,i_2)$, $i_1, i_2 \in \{0, \ldots, \hd-1\}$, we obtain that $\ell^I=e_I-\frac{1}{2}\epsilon_{i_1i_2}$.
   
  Now let $|I| = 3$, i.e., $I=(i_1,i_2,i_3)$. Then, taking the limit along any sequence of partitions $\cP$ of $[0,T]$ with vanishing mesh size, we note that
  \begin{align*}
    &\langle e_I, \hbbX_T \rangle \\
    &\quad = \lim_{|\cP| \to 0} \sum_{[u,v] \in \cP} (\hbbX^{(2)}_{0,u})^{i_1i_2} \hX^{i_3}_{u,v} + \hX^{i_1}_{0,u} (\hbbX^{(2)}_{u,v})^{i_2i_3} \\
    &\quad = \lim_{|\cP| \to 0} \sum_{[u,v] \in \cP} (\hbbX^{o,(2)}_{0,u})^{i_1i_2} - \frac{1}{2} [\hbX]^{i_1i_2}_{0,u})\hX^{i_3}_{u,v} + \hX^{i_1}_{0,u} ((\hbbX^{o,(2)}_{u,v})^{i_2i_3} - \frac{1}{2} [\hbX]^{i_2i_3}_{u,v}) \\
    &\quad = \langle e_I, \hbbX^o_T \rangle - \frac{1}{2} \int_0^T [\hbX]^{i_1i_2}_{0,t} \dd \hX^{i_3}_t - \frac{1}{2} \int_0^T \hX^{i_1}_{0,t} \dd [\hbX]^{i_2i_3}_t \\
    &\quad = \langle e_I, \hbbX^o_T \rangle - \frac{1}{2} \langle \epsilon_{i_1i_2} \otimes e_{i_3}, \hbbX^o_T \rangle - \frac{1}{2} \langle e_{i_1} \otimes \epsilon_{i_2i_3}, \hbbX^o_T \rangle \\
    &\quad = \langle \ell^I, \hbbX^o_T \rangle,
  \end{align*}
  for $\ell^I := e_I - \frac{1}{2} \epsilon_{i_1i_2} \otimes e_{i_3} - \frac{1}{2} e_{i_1} \otimes \epsilon_{i_2i_3} = \ell^{(i_1,i_2)} \otimes e_{i_3} - \frac{1}{2} e_{i_1} \otimes \epsilon_{i_2i_3}$, where the latter two integrals exist as Young integrals because $[\hbX]$ is a path of finite $p/2$-variation. We apply an inductive argument: Assuming that the claim holds true for any $n < N$, for $N > 3$, we now let $n \geq N$. Let $I$ be a multi-index with entries in $\{0, \dots, \hd-1\}$ of length $n$. Then,
  \begin{align*}
    &\langle e_I, \hbbX_T \rangle \\
    &\quad = \lim_{|\cP| \to 0} \sum_{[u,v] \in \cP} \langle e_{I'}, \hbbX_u \rangle \hX^{i_n}_{u,v} + \langle e_{(I')'}, \hbbX_u \rangle (\hbbX^{(2)}_{u,v})^{i_{n-1} i_n} \\
    &\quad = \lim_{|\cP| \to 0} \sum_{[u,v] \in \cP} \langle \ell^{I'}, \hbbX^o_u \rangle  \hX^{i_n}_{u,v} + \langle \ell^{(I')'}, \hbbX^o_u \rangle ((\hbbX^{o,(2)}_{u,v})^{i_{n-1}i_n} - \frac{1}{2}[\hbX]_{u,v}^{i_{n-1} i_n}) \\
    &\quad= \lim_{|\cP|\to 0} \sum_{[u,v] \in \cP} \langle \ell^{(I')'} \otimes e_{i_{n-1}}, \hbbX^o_u \rangle \hX^{i_n}_{u,v} - \frac{1}{2} \langle \ell^{((I')')'} \otimes \epsilon_{i_{n-2}i_{n-1}}, \hbbX^o_u \rangle \hX^{i_n}_{u,v} \\
    &\qquad + \langle \ell^{(I')'}, \hbbX^o_u \rangle (\hbbX^{o,(2)})_{u,v}^{i_{n-1}i_n} - \frac{1}{2} \lim_{|\cP|\to 0} \sum_{[u,v] \in \cP} \langle \ell^{(I')'}, \hbbX^o_u \rangle [\hbX]^{i_{n-1}i_n}_{u,v} \\
    &\quad = \langle \ell^{(I')'} \otimes e_{i_{n-1}} \otimes e_{i_n}, \hbbX^o_T \rangle - \frac{1}{2} \langle \ell^{((I')')'} \otimes \epsilon_{i_{n-2}i_{n-1}} \otimes e_{i_n}, \hbbX^o_T \rangle - \frac{1}{2} \langle \ell^{(I')'} \otimes \epsilon_{i_{n-1}i_n}, \hbbX^o_T \rangle \\
    &\quad = \langle \ell^{I'} \otimes e_{i_{n}} - \frac{1}{2} \ell^{(I')'} \otimes \epsilon_{i_{n-1}i_n}, \hbbX^o_T \rangle,
  \end{align*}
  where we again used that the integral w.r.t.~$[\hbX]$ exists as a Young integral because $[\hbX]$ is a path of finite $p/2$-variation.
\end{proof}

The signature of a weakly geometric rough path $(1,X,\X^{o,(2)})$ is a group-like valued path and therefore satisfies the shuffle property. It turns out that extending the path $X$ to $\hX$ by the rough path bracket terms yields that the signature defined via Lyons' extension admits the so-called quasi-shuffle property, see~\cite{Hoffman2000}. This allows us to prove the universal approximation theorem for the signature of non-weakly geometric rough paths in Theorem~\ref{thm: UAT non-geometric}.

\begin{definition}\label{def: quasi shuffle}
  For every two multi-indices $I$, $J$ with entries in $\{0,\ldots,\hd-1\}$, we define the \emph{quasi-shuffle product} by
  \begin{equation*}
    e_I \widetilde{\shuffle} e_J = (e_{I'} \widetilde{\shuffle} e_J) \otimes e_{i_{|I|}} + (e_I \widetilde{\shuffle} e_{J'}) \otimes e_{j_{|J|}} + (e_{I'} \widetilde{\shuffle} e_{J'}) \otimes \epsilon_{i_{|I|} j_{|J|}},
  \end{equation*}
  with $e_I \widetilde{\shuffle} e_\emptyset = e_\emptyset \widetilde{\shuffle} e_I = e_I$.
\end{definition}

\begin{proposition}[Quasi-shuffle property]\label{prop: quasi shuffle property}
  Let $I, J$ be two multi-indices with entries in  $\{0, \dots, \hd-1\}$. Let $\bX = (X,\X^{(2)}) \in \cC^p([0,T];\R^d)$, $p \in (2,3)$, be a rough path, and $\hX := (\cdot,X,Q(X))$ be the extended path. Further, let $\hbbX$ be Lyons' extension of $(1,\hX,\hbbX^{(2)}) \in C_o^{p\textup{-var}}([0,T];T^2(\R^{\hd}))$. Then,
  \begin{equation*}
    \langle e_I, \hbbX_t \rangle \langle e_J, \hbbX_t \rangle = \langle e_I \widetilde{\shuffle} e_J, \hbbX_t \rangle, \qquad t \in [0,T].
  \end{equation*}
\end{proposition}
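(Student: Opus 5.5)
We prove the identity by induction on $|I|+|J|$, working at a fixed $t\in[0,T]$ (by Chen's relation it suffices to treat $\hbbX_{0,t}=\hbbX_t$). If $I$ or $J$ is empty the claim is trivial, since $\langle e_\emptyset,\hbbX_t\rangle=1$. The strategy is to express both sides as limits of compensated Riemann sums, in the spirit of the proof of Proposition~\ref{prop: linear functional}, and then match the terms using the recursive definition of $\widetilde{\shuffle}$.

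\textbf{Step 1: product of two paths.} Fix $I,J$ with $|I|,|J|\ge 1$ and set $A_t:=\langle e_I,\hbbX_t\rangle$ and $B_t:=\langle e_J,\hbbX_t\rangle$. For a partition $\cP$ of $[0,t]$ with vanishing mesh, telescoping gives
\begin{equation*}
  A_tB_t=\sum_{[u,v]\in\cP}\big(A_uB_{u,v}+B_uA_{u,v}+A_{u,v}B_{u,v}\big).
\end{equation*}
By Chen's relation for the multiplicative functional $\hbbX$, the increments satisfy
\begin{equation*}
  A_{u,v}=\langle e_{I'},\hbbX_u\rangle\hX^{i_{|I|}}_{u,v}+\langle e_{(I')'},\hbbX_u\rangle(\hbbX^{(2)}_{u,v})^{i_{|I|-1}i_{|I|}}+o,
\end{equation*}
and analogously for $B_{u,v}$. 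Here the remainder $o$ is of order $c(u,v)^{3/p}$, so its sum vanishes as the mesh tends to zero, because $3/p>1$.

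\textbf{Step 2: cross term.} Expanding $A_{u,v}B_{u,v}$, every product containing a second-level term, or a component of $\hX$ that is time or bracket, is of order $c(u,v)^{\theta}$ with $\theta>1$; this uses that time and $Q(X)$ have finite $p/2$-variation. The only surviving contribution is
\begin{equation*}
  \sum_{[u,v]\in\cP}\langle e_{I'},\hbbX_u\rangle\langle e_{J'},\hbbX_u\rangle X^{i}_{u,v}X^{j}_{u,v}
\end{equation*}
with $i=i_{|I|}$, $j=j_{|J|}\in\{1,\dots,d\}$. Using $X^i_{u,v}X^j_{u,v}=(\X^{(2)}_{u,v})^{ij}+(\X^{(2)}_{u,v})^{ji}+[\bX]^{ij}_{u,v}$, which follows from the definition of the bracket together with Chen's relation for the bracket increments, this sum splits into three parts: two second-order rough-sum terms plus $\int\langle e_{I'},\hbbX\rangle\langle e_{J'},\hbbX\rangle\dd[\bX]^{ij}$. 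The last is a Young integral.

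\textbf{Step 3: identify the limit and use induction.} By the induction hypothesis, $\langle e_{I'},\hbbX_u\rangle\langle e_{J'},\hbbX_u\rangle=\langle e_{I'}\widetilde{\shuffle} e_{J'},\hbbX_u\rangle$ and $\langle e_{I'},\hbbX_u\rangle B_u=\langle e_{I'}\widetilde{\shuffle} e_J,\hbbX_u\rangle$, and similarly with $I,J'$. The sums $\sum A_u\langle e_{J'},\hbbX_u\rangle\hX^{j}_{u,v}$, together with their second-level corrections, converge to the rough integral $\int\langle e_I\widetilde{\shuffle}e_{J'},\hbbX\rangle\dd\hbX^{j}$. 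By the integral representation of Lyons' extension (Proposition~\ref{prop: Lyons extension coincides with iterated integrals}), this integral equals $\langle (e_I\widetilde{\shuffle}e_{J'})\otimes e_j,\hbbX_t\rangle$. The bracket Young integral gives $\langle(e_{I'}\widetilde{\shuffle}e_{J'})\otimes\epsilon_{ij},\hbbX_t\rangle$, as noted after the definition of $\hX$. Summing the three contributions yields exactly the recursion of Definition~\ref{def: quasi shuffle}. When $i$ or $j$ is not a $d$-index, the cross term vanishes, consistent with the convention $\epsilon=0$ used in Proposition~\ref{prop: linear functional}.

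\textbf{Main obstacle: Gubinelli derivative bookkeeping.} The delicate step is Step~3: one must check that the compensated sums really converge to rough integrals of the product path, with the correct Gubinelli derivative. In particular, the terms $(\X^{(2)}_{u,v})^{ij}$ and $(\X^{(2)}_{u,v})^{ji}$ produced in Step~2 must be exactly the second-level compensators missing from the first two sums. Those sums have derivatives $\langle e_{I'},\hbbX\rangle B$ and $A\langle e_{J'},\hbbX\rangle$, so their mixed second-level corrections are precisely these extra $\X^{(2)}$ pieces. An alternative route, which may be cleaner, is to transfer the identity to $\hbbX^o$ via Proposition~\ref{prop: linear functional} and use the ordinary shuffle property there; but then one needs a combinatorial identity between $\ell^I\shuffle\ell^J$ and $\ell$ applied to the quasi-shuffle, which I expect to be messier. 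I would therefore pursue the direct Riemann-sum induction.
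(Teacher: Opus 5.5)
Your argument is correct, but it takes a different route from the paper.

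\textbf{The paper's route.} The paper treats the base case $|I|+|J|=2$ separately, using Proposition~\ref{prop: linear functional} and the ordinary shuffle property of the group-like path $\hbbX^o$. In the inductive step it regards $Z=(\langle e_I,\hbbX\rangle,\langle e_J,\hbbX\rangle)$ as the rough integral of a controlled path $Y$ against $\hbX$ and lifts $Z$ to a rough path $\mathbf Z$. It then applies the rough It\^o formula to $Z^1Z^2$ and uses associativity of rough integration to rewrite $\int (Z^2,Z^1)\dd Z$ as an integral against $\hbX$. Finally it cites $[\mathbf Z]=\int (Y\otimes Y)\dd[\hbX]$ for the bracket term.

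\textbf{Your route.} You prove the product formula by hand. You combine the discrete telescoping identity, Chen's relation, and the decomposition $\hX^i_{u,v}\hX^j_{u,v}=(\hbbX^{(2)}_{u,v})^{ij}+(\hbbX^{(2)}_{u,v})^{ji}+[\hbX]^{ij}_{u,v}$. This is exactly the mechanism hidden inside the rough It\^o formula and the bracket lemma: the cross term supplies the two missing compensators plus the bracket increment. Your induction also covers $|I|=|J|=1$ uniformly, without passing through $\hbbX^o$.

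\textbf{What each buys.} Your argument is self-contained. It uses only Chen's relation, the bounds $|\hbbX^{(n)}_{u,v}|\lesssim c(u,v)^{n/p}$ from Lyons' extension, and $\sum_{[u,v]\in\cP}c(u,v)^{3/p}\to 0$. The paper's argument is shorter and has a transparent ``product $=$ It\^o formula'' structure, but it outsources the compensator bookkeeping to cited results on lifting $Z$, associativity, and the bracket of an integral.

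\textbf{Something to write out in Step~3.} You identify the compensated sums with $\langle (e_I\widetilde{\shuffle} e_{J'})\otimes e_j,\hbbX_t\rangle$ via Proposition~\ref{prop: Lyons extension coincides with iterated integrals}. For this, state explicitly that your product-rule Gubinelli derivative agrees with the natural derivative of $\langle e_I\widetilde{\shuffle} e_{J'},\hbbX\rangle$, read off from the recursion of $\widetilde{\shuffle}$.
\begin{itemize}
\item This needs the induction hypothesis also for $A\langle e_{(J')'},\hbbX\rangle=\langle e_I\widetilde{\shuffle} e_{(J')'},\hbbX\rangle$, not only for the products you listed.
\item The coefficient in the $\epsilon$-direction only multiplies terms of order $c(u,v)^{3/p}$, so it is harmless.
\end{itemize}
The paper needs the same check with its $U'$ and leaves it equally brief.

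\textbf{Optional simplification.} You can bypass rough integrals entirely. Telescope $\langle (e_I\widetilde{\shuffle} e_{J'})\otimes e_j,\hbbX_t\rangle$ over the same partition and expand each increment by Chen's relation. The whole proof then becomes a comparison of finite sums up to $\sum O(c(u,v)^{3/p})$.
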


\begin{remark}
  For the quasi-shuffle property to hold, we actually do not need to extend the path $X$ by the time-component but only by its rough path bracket terms $Q(X)$.
\end{remark}

\begin{proof}
  Since $\langle e_\emptyset,\hbbX\rangle=1$, the statement immediately holds true for $I=\emptyset$ or $J=\emptyset$. Now let $|I|+|J|=2$, $I, J \neq \emptyset$, i.e., $I=(i)$, $J=(j)$, $i,j\in\{0,\ldots,\hd-1\}$. Then, for $t\in[0,T]$, we have by Proposition~\ref{prop: linear functional} that
  \begin{align*}
    \langle e_i,\hbbX_t\rangle \langle e_j,\hbbX_t\rangle&=\langle e_i,\hbbX^o_t\rangle \langle e_j,\hbbX^o_t\rangle\\
    &=\langle e_i \shuffle e_j,\hbbX^o_t\rangle\\
    &=\langle e_{(i,j)},\hbbX^o_t\rangle + \langle e_{(j,i)},\hbbX^o_t\rangle\\
    &=\langle e_{(i,j)},\hbbX_t\rangle+\frac{1}{2}\langle \epsilon_{ij},\hbbX_t\rangle+ \langle e_{(j,i)} ,\hbbX_t\rangle +\frac{1}{2} \langle \epsilon_{ji},\hbbX_t\rangle\\
    &=\langle (e_i\widetilde\shuffle e_\emptyset)\otimes e_j,\hbbX_t\rangle+\langle (e_{\emptyset}\widetilde\shuffle e_j)\otimes e_i,\hbbX_t\rangle+\langle (e_\emptyset\widetilde\shuffle e_\emptyset)\otimes \epsilon_{ij},\hbbX_t\rangle\\
    &=\langle e_i\widetilde\shuffle e_j,\hbbX_t\rangle,
  \end{align*}
  where we note that $\epsilon_{ji} = \epsilon_{ij}$ for $i \leq j$, and $\epsilon_{ij}=0$ for $i,j\notin\{1,\dots,d\}.$
    
  We apply an inductive argument: We assume that the claim holds true for any $I,J$ such that $|I|+|J|<n$, $n>2$ and let $I,J$ be such that $|I|+|J|\le n$, $I,J\neq \emptyset$.
    
  We start by noting that $\langle e_I, \hbbX \rangle$ is a controlled path w.r.t.~$\hX$.
    
  We consider $(Y,Y') \in \mathscr{C}^p_{\hX}$ to be the controlled path given by $Y \in C^{p\textup{-var}}([0,T];\mathcal{L}(\R^\hd;\R^2))$, where
  \begin{equation*}
    Y^{mn} = 
    \begin{cases}
      \langle e_{I'}, \hbbX \rangle, & \text{if } m = 1,\, n = i_{|I|}, \\
      \langle e_{J'}, \hbbX \rangle , & \text{if } m = 2,\, n = j_{|J|}, \\
      0, & \text{otherwise}.
    \end{cases}
  \end{equation*}
  Since $\hbbX^{N}_t := \int_0^t \hbbX^{N-1}_r \otimes \d \hbX_r$ is a rough integral against $\hbX = (\hX,\hbbX^{(2)}) \in \cC^p([0,T];\R^{\hd})$, the projection onto the tensor product $(\R^d)^{\otimes N}$, $\Pi_N(\hbbX^N)$, is the rough integral of the controlled path $(\Pi_{N-1}(\hbbX^N), \Pi_{N-2}(\hbbX^N))$ against $\hbX$. That is, for any multi-index $I$, we have that
  \begin{equation*}
    \langle e_I, \hbbX_t \rangle = \lim_{|\cP|\to 0} \sum_{[u,v] \in \cP \cap [0,t]} \langle e_{I'}, \hbbX_u \rangle X^{i_{|I|}}_{u,v} + \langle e_{(I')'}, \hbbX_u \rangle \hbbX^{(2),i_{|I'|}i_{|I|}}_{u,v}.
  \end{equation*}
  Particularly, it then is $Y'_t \in \mathcal L(\R^d;\mathcal L(\R^d;\R^2))$, given by $(Y'_t)^{1ij} = \langle e_{(I')'}, \hbbX_t \rangle$ for $i = i_{|I'|}$, $j = i_{|I|}$, and $(Y'_t)^{2ij} = \langle e_{(J')'}, \hbbX_t \rangle$ for $i = j_{|J'|}$, $j = j_{|J|}$, $(Y'_t)^{mij} = 0$ otherwise, $m = 1, 2$, $i, j = 0, \dots, \hd-1$. This gives us that
  \begin{align*}
    Z_t = \lim_{|\cP|\to 0} \sum_{[u,v] \in \cP \cap [0,t]} Y_u X_{u,v} + Y'_u \hbbX^{(2)}_{u,v}  = \lim_{|\cP| \to 0} \sum_{[u,v] \in \cP \cap [0,t]} \mathcal{Z}_{u,v},
  \end{align*}
  with 
  \begin{equation*}
    \mathcal{Z}_{u,v}:=(\langle e_{I'}, \hbbX_u \rangle X^{i_{|I|}}_{u,v} + \langle e_{(I')'}, \hbbX_u \rangle \hbbX^{(2),i_{|I'|}i_{|I|}}_{u,v}, \langle e_{J'}, \hbbX_u \rangle X^{j_{|J|}}_{u,v} + \langle e_{(J')'}, \hbbX_u \rangle \hbbX^{(2),j_{|J'|}j_{|J|}}_{u,v})^\top,
  \end{equation*}
  where we have used that $(Y'_u \hbbX^{(2)}_{u,v})^m = \sum_{i} \sum_{j} (Y'_u)^{mij} \hbbX_{u,v}^{(2),ij}$.
    
  Then, $Z:= \int Y \dd \hbX$ is the rough integral of $Y$ against $\hbX$, which equals $Z = (Z^1, Z^2)^\top =(\langle e_I, \hbbX \rangle, \langle e_J, \hbbX \rangle)^\top$.
       
  As a controlled path, $Z$ can now be canonically lifted to a rough path $\mathbf Z = (Z,\mathbb Z)$, with $\mathbb Z_{s,t} := \int_s^t Z_r \otimes Z_r - Z_s \otimes Z_{s,t}$, $(s,t) \in \Delta_T$, where the integral is defined as in Lemma~\ref{lemma: integration against controlled path}.

  Using the It{\^o}-formula for rough paths, see e.g.~\cite[Proposition~5.8]{Friz2020}, for $f(Z)=Z^1 \cdot Z^2$, we thus obtain that
  \begin{equation}\label{eq: ito formula rough path}
    \langle e_I, \hbbX_t\rangle \langle e_J, \hbbX_t \rangle = \int_0^t (Z^2,Z^1)_r \dd \mathbf Z_r +  \frac{1}{2} [\mathbf Z]^{12}_t + \frac{1}{2} [\mathbf Z]^{21}_t = \int_0^t (Z^2,Z^1)_r \dd Z_r + [\mathbf Z]^{12}_t,
  \end{equation}
  where the latter integral is well defined as a rough integral against the controlled path $(Z,Y)$, see also \cite[Lemma~A.4]{Allan2023a}.

  Due to the associativity of the rough integral, see \cite[Proposition~A.2]{Allan2023a}, we have 
  \begin{equation}\label{eq: quasi shuffle 1}
    \begin{split}
    \int_0^t (Z^2, Z^1)_r \dd Z_r &= \int_0^t (Z^2,Z^1)_r Y_r \dd \hbX_r \\
    &= \int_0^t (0, \dots, 0, \langle e_J, \hbbX_r \rangle \langle e_{I'}, \hbbX_r \rangle, 0, \dots, 0, \langle e_I, \hbbX_r \rangle \langle e_{J'}, \hbbX_r \rangle, 0, \dots, 0) \dd \hbX_r\\
    &= \int_0^t (0, \dots, 0, \langle e_J\widetilde\shuffle e_{I'}, \hbbX_r \rangle, 0, \dots, 0, \langle e_I\widetilde\shuffle e_{J'}, \hbbX_r \rangle, 0, \dots, 0) \dd \hbX_r \\
    &=\langle (e_J\widetilde\shuffle e_{I^\prime})\otimes e_{i_{|I|}},\hbbX_t\rangle+\langle (e_I\widetilde\shuffle e_{J^\prime})\otimes e_{j_{|J|}},\hbbX_t\rangle,
    \end{split}
  \end{equation}
  where we used the induction hypothesis in the second last step. 

  By definition, it holds that $e_J \widetilde{\shuffle} e_{I'} = (e_J \widetilde{\shuffle} e_{(I')'}) \otimes e_{i_{|I'|}} + (e_{J'} \widetilde{\shuffle} e_{I'}) \otimes e_{j_{|J|}} + (e_{J'} \widetilde{\shuffle} e_{(I')'}) \otimes \epsilon_{j_{|J|} i_{|I'|}}$. So setting $U_t := (0, \dots, 0, \langle e_J\widetilde\shuffle e_{I'}, \hbbX_t \rangle, 0, \dots, 0, \langle e_I \widetilde\shuffle e_{J'}, \hbbX_t \rangle, 0, \dots, 0)$, we observe that $U'_t \in \mathcal L(\R^{\hd}; \mathcal{L}(\R^{\hd};\R))$, given by $(U'_t)^{i_{|I'|}j} = \langle e_J \widetilde{\shuffle} e_{(I')'}, \hbbX_t \rangle$, $(U'_t)^{j_{|J|}j} = \langle e_{J'} \widetilde{\shuffle} e_{I'}, \hbbX_t \rangle$, and $(U'_t)^{kj} = \langle e_{J'} \widetilde{\shuffle} e_{(I')'}, \hbbX_t \rangle$, for $k$ such that $e_k = \epsilon_{j_{|J|} i_{|I'|}}$, for $j = i_{|I|}$. Similarly for $j = j_{|J|}$, and otherwise it is $(U'_t)^{ij} =0$. By a similar line of argument as above, it then follows the last step.
    
  Further, by \cite[Lemma~B.1]{Allan2023a}, we have that\footnote{In writing $Y_u \otimes Y_u$, we technically mean the $4$-tensor whose $ijk\ell$ component is given by $[Y_u \otimes Y_u]^{ijk\ell} = (Y_u)^{ij} (Y_u)^{k\ell}$, and we interpret the ``multiplication'' $(Y_u \otimes Y_u) [\hbX]_{u,v}$ as the matrix whose $ik$ component is given by $[(Y_u \otimes Y_u) [\hbX]_{u,v}^{ik} = \sum_{j=1}^d \sum_{\ell=1}^d (Y_u)^{ij} (Y_u)^{k\ell} [\hbX]^{j\ell}_{u,v}$.}
  \begin{equation*}
    [\mathbf Z]_t = \int_0^t (Y_r \otimes Y_r) \dd [\hbX]_r = \lim_{|\cP|\to 0} \sum_{[u,v] \in \cP} (Y_u \otimes Y_u) [\hbX]_{u,v},
  \end{equation*}
  and by definition of $Y$, we obtain
  \begin{equation}\label{eq: quasi shuffle 2}
    [\mathbf Z]_t^{12} = [\mathbf Z]_t^{21} = \int_0^t \langle e_{I'}, \hbbX_r \rangle \langle e_{J'}, \hbbX_r \rangle \dd [\hbX]^{i_{|I|}j_{|J|}}_r = \langle (e_{I'} \widetilde{\shuffle} e_{J'}) \otimes \epsilon_{i_{|I|}j_{|J|}}, \hbbX_t \rangle,
  \end{equation}
  where we again used the induction hypothesis in the last step. 
     
  Finally, combining \eqref{eq: ito formula rough path}, \eqref{eq: quasi shuffle 1} and \eqref{eq: quasi shuffle 2}, yields
  \begin{align*}
    \langle e_I,\hbbX_t\rangle \langle e_J,\hbbX_t\rangle&= \langle (e_J\widetilde\shuffle e_{I^\prime})\otimes e_{i_{|I|}},\hbbX_t\rangle+\langle (e_I\widetilde\shuffle e_{J^\prime})\otimes e_{j_{|J|}},\hbbX_t\rangle+\langle (e_{I'} \widetilde{\shuffle} e_{J'}) \otimes \epsilon_{i_{|I|}j_{|J|}}, \hbbX_t \rangle\\
    &=\langle e_I\widetilde\shuffle e_J,\hbbX_t\rangle.
  \end{align*}
\end{proof}

\subsection{Universal approximation with signatures of general rough paths}

We now present the pathwise universal approximation theorem of the signature of a general, i.e., not necessarily, weakly geometric rough path via Lyons' lift, as an extension of the classical result to a more general class of signatures. The proof is based on an application of the Stone--Weierstrass theorem, which requires that the linear span of the signature forms an algebra, making use of the quasi shuffle property. For this, we formulate the universal approximation theorem on the subspace of rough paths extended by time and the rough path bracket terms, defined by
\begin{equation*}
  \widehat{\cC}^p([0,T];\R^\hd) := \{(\hX,\hbbX^{(2)}) \in \cC^p([0,T];\R^\hd) : (X,\X^{(2)}) \in \cC^p([0,T];\R^d), \hX = (\cdot,X,Q(X)) \}.
\end{equation*}

\begin{theorem}\label{thm: UAT non-geometric}
  Let $p \in (2,3)$, and let $K \subset \widehat{\cC}^p([0,T];\R^{\hd})$ be a compact subset, bounded with respect to the rough path norm, and consider a continuous function $f \colon K \to \R$. Further for some $L > 0$, let $K_L \subset K$ be the subset defined by
  \begin{align*}
    K_L:= \{\hbX=(\hX,\hbbX^{(2)}) \in K: \, 
        &\|(\hX,\hbbX^{(2)})\|_p + \|[\widehat{\bX}]\|_{\p} \leq L
        \}.
  \end{align*}
  Then, for every $\epsilon > 0$, there exists a linear functional $\ell \in T(\R^{\hd})$ such that 
  \begin{equation*}
    \sup_{\widehat{\bX}\in K_L} |f(\hbX) - \langle \ell, \hbbX_T \rangle| < \epsilon,
  \end{equation*}
  where $\hbbX$ denotes the signature of $\widehat{\bX}$ given by Lyons' extension theorem to $T((\R^\hd))$.
\end{theorem}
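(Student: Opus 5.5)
The plan is to apply the Stone--Weierstrass theorem on $K_L$. The first issue is that $K_L$ must be compact, and for this I need it closed in $K$. The set $K_L$ is defined by the condition $\|\hbX\|_p + \|[\hbX]\|_{\p} \le L$. The map $\hbX \mapsto \|\hbX\|_p + \|[\hbX]\|_{\p}$ is lower semicontinuous with respect to the rough path topology, since $p$-variation norms are suprema over partitions of continuous functionals. Hence $K_L$ is a closed subset of the compact set $K$, and so it is compact.

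Next I will consider the class
\begin{equation*}
\mathcal{A} := \{\hbX \mapsto \langle \ell, \hbbX_T\rangle : \ell \in T(\R^{\hd})\}
\end{equation*}
of functions on $K_L$ and verify the Stone--Weierstrass hypotheses. There are four of them.

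\emph{(i) Continuity.} Each map $\hbX \mapsto \langle e_I, \hbbX_T\rangle$ must be continuous on $K_L$. This follows from the continuity of Lyons' extension on sets of rough paths with uniformly bounded $p$-variation, which is exactly why the bound by $L$ is imposed. Uniformly in $K_L$, one has $\|\Pi_N(\hbbX) ; \Pi_N(\widetilde{\hbbX})\|_p \to 0$ whenever $\hbX \to \widetilde{\hbX}$ in rough path distance; see \cite[Theorem~3.1.3]{Lyons2007}.

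\emph{(ii) Algebra.} $\mathcal{A}$ is a linear space by construction. It is closed under products by Proposition~\ref{prop: quasi shuffle property}, since $\langle e_I,\hbbX_T\rangle\langle e_J,\hbbX_T\rangle = \langle e_I\widetilde\shuffle e_J,\hbbX_T\rangle$ and $e_I\widetilde\shuffle e_J \in T(\R^{\hd})$.

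\emph{(iii) Constants.} $\mathcal{A}$ contains the constants via $\ell = e_\emptyset$.

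\emph{(iv) Point separation.} This is the main obstacle. Take $\hbX \ne \widetilde{\hbX}$ in $K_L$ and suppose $\langle \ell,\hbbX_T\rangle = \langle \ell,\widetilde{\hbbX}_T\rangle$ for all $\ell$. I will pass to the weakly geometric lifts $\hbbX^o$ and $\widetilde{\hbbX}^o$, inverting Proposition~\ref{prop: linear functional}. The recursion $\ell^I = e_I + (\text{terms of lower length})$ is triangular, because each $\epsilon_{ij}$ is a single letter and so the correction terms are strictly shorter words. It can therefore be inverted inductively in the word length, showing that every $\langle e_I,\hbbX^o_T\rangle$ is a linear functional of $\hbbX_T$. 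Consequently $\hbbX^o_T = \widetilde{\hbbX}^o_T$.

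By \cite{Boedihardjo2016}, the two weakly geometric rough paths $\hbbX^{o,2}$ and $\widetilde{\hbbX}^{o,2}$ then differ by a tree-like path. Since the time component $\langle e_0,\cdot\rangle$ is strictly increasing, no tree-like excursions are possible, so the weakly geometric lifts coincide on $[0,T]$. This step requires checking that the time-extended weakly geometric rough path is tree-reduced; I would argue this as in the proof of condition (iii) of Theorem~\ref{thm: UAT geometric}, or more directly by recovering the path from the signature via the iterated integrals $\langle e_I\otimes e_0^{\otimes k},\cdot\rangle$, which is essentially the appendix argument.

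Equality of the weakly geometric lifts gives $\hX = \widetilde{\hX}$, so the brackets $[\hbX] = [\widetilde{\hbX}]$ agree as components. Then $\hbbX^{(2)} = \hbbX^{o,(2)} - \frac12[\hbX]$ agrees as well, and hence $\hbX = \widetilde{\hbX}$, a contradiction.

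With (i)--(iv) in hand, Stone--Weierstrass on the compact Hausdorff space $K_L$ yields an $\ell\in T(\R^{\hd})$ with $\sup_{K_L}|f - \langle\ell,\hbbX_T\rangle|<\epsilon$.
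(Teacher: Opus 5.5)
Your argument is correct and follows the paper's skeleton: Stone--Weierstrass on $K_L$, with closure under products coming from Proposition~\ref{prop: quasi shuffle property}. Two of the verifications, however, take a different route.

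\emph{Continuity.} The paper does not use continuity of Lyons' extension for non-geometric multiplicative functionals. Instead it maps $\hbX$ to the weakly geometric lift $(1,\hX_{0,\cdot},\hbbX^{(2)}_{0,\cdot}+\frac12[\hbX])$. It uses continuity of the bracket map (Proposition~\ref{prop: rough path to bracket is continuous}) and the bracket part of the bound $L$ to land in a bounded set, applies the geometric result \cite[Corollary~9.11]{Friz2010}, and returns to $\langle e_I,\hbbX_T\rangle$ via Proposition~\ref{prop: linear functional}. Your direct route is shorter and does not need the bracket bound. It does rest on the continuity statement for general multiplicative functionals, and you leave implicit the usual passage from $p$-variation convergence to a common control.

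\emph{Point separation.} The paper never leaves $\hbX$'s own signature $\hbbX$. Since all brackets involving the time direction vanish, $e_I\widetilde{\shuffle}e_0^{\otimes k}$ acts as $e_I\shuffle e_0^{\otimes k}$ on $\hbbX$. Hence $\langle(e_I\shuffle e_0^{\otimes k})\otimes e_0,\hbbX_T\rangle=\int_0^T\langle e_I,\hbbX_t\rangle\frac{t^k}{k!}\dd t$, and density of polynomials gives $\langle e_I,\hbbX_t\rangle=\langle e_I,\widetilde{\hbbX}_t\rangle$ for $|I|\le 2$ and all $t$. That is directly $\hX=\widetilde{\hX}$ and $\hbbX^{(2)}=\widetilde{\hbbX}^{(2)}$.

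Your detour is also valid. You invert the unitriangular relation of Proposition~\ref{prop: linear functional}, identify the geometric lifts, and then recover $\hbbX^{(2)}=\hbbX^{o,(2)}-\frac12[\hbX]$, using that the bracket is stored in the $Q(X)$-components of $\hX$. This makes explicit that $\hbbX_T$ and $\hbbX^o_T$ determine each other through a universal linear bijection, so injectivity results for geometric signatures transfer.

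As you note, the tree-like route through \cite{Boedihardjo2016} still needs an extra argument: that a forward--backward loop with a strictly monotone coordinate is trivial. So the time-integral fallback is the one to use. But that fallback works verbatim on $\hbbX$ itself, because the $e_0$-letters are Riemann--Stieltjes integrals for the non-geometric signature too. Your functional $\langle e_I\otimes e_0^{\otimes k},\hbbX_T\rangle=\int_0^T\langle e_I,\hbbX_t\rangle\frac{(T-t)^{k-1}}{(k-1)!}\dd t$ even avoids shuffles. Used this way, the inversion and bracket-recovery steps become unnecessary.

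Finally, your check that $K_L$ is closed, hence compact, supplies a point the paper leaves implicit.
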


\begin{proof}
  The result follows by an application of the Stone--Weierstrass theorem to the set
  \begin{equation*}
	\mathcal A := \spn \{K_L \ni \hbX\mapsto \langle e_I,\hbbX_T \rangle \in \R: I \in \{0,\ldots,\hd-1\}^N, N \in \N_0\}.
  \end{equation*}
  Therefore we have to show that $\mathcal A$
  \begin{enumerate}
    \item[(i)] is a vector subspace of $C(K_L;\R)$,
	\item[(ii)] is a subalgebra and contains a non-zero constant function, and
    \item[(iii)] separates points.
  \end{enumerate}

  \emph{(i):} First, we note that any rough path $\hbX=(\hX,\hbbX^{(2)}) \in \cC^p([0,T];\R^\hd)$ canonically extends to a weakly geometric rough path via
  \begin{align*}
    \iota \colon \hbX \mapsto \hbbX^{o, 2} &:= (1,\hX_{0,\cdot},\mathbb{\hX}^{o,(2)}_{0,\cdot}) \\
    &:= (1,\hX_{0,\cdot},\mathbb{\hX}^{(2)}_{0,\cdot} +\frac{1}{2} [\hbX]),
  \end{align*}
  that is, $\iota(\hbX) \in (C_o^{p\textup{-var}}([0,T];G^2(\R^\hd)),d_{p\textup{-var}})$. Further, we observe that for any $\hbX \in K_L$, it holds that
  \begin{equation*}
    \|\hbbX^{o,(2)}\|_{\p} \leq \|\hbbX^{(2)}\|_\p + \|[\hbX]\|_{\p} \leq L,
  \end{equation*}
  thus we can embed $K_L$ into $\iota(K_L) := \{\iota(\hbX): \hbX\in K_L\}$, which is a subset of the compact set $K_L^2 := \{\hbbX^{o, 2}: \|\mathbf{1}; \hbbX^{o, 2}\|_p \leq L\}$ of $C_o^{p\textup{-var}}([0,T];G^2(\R^\hd))$.

  Because
  \begin{equation*}
    (\cC^p([0,T];\R^\hd), d_{p,\p\textup{-var}}) \ni \hbX \mapsto [\hbX] \in (C^{\p\textup{-var}}([0,T];\R^\hd), \|\cdot \, ; \cdot\|_{\p})
  \end{equation*}
  is continuous with respect to $d_{p,\p\textup{-var}} := \|\cdot - \cdot\|_p +  \|\cdot - \cdot\|_\p$, see Proposition~\ref{prop: rough path to bracket is continuous}, we notice that
  \begin{equation*}
    (K, d_{p,\p\textup{-var}}) \ni \hbX = (\hX,\hbbX^{(2)})\mapsto (1,\hX_{0,\cdot},\hbbX^{(2)}_{0,\cdot} + \frac{1}{2}[\hbX]) \in (C_o^{p\textup{-var}}([0,T];T^2(\R^{\hd})), d_{p\textup{-var}})
  \end{equation*}
  is continuous so that
  \begin{equation*}
    (K_L, d_{p,\p\textup{-var}}) \ni \hbX=(\hX,\hbbX^{(2)}) \mapsto \hbbX^{o,2} \in (\iota(K_L),d_{p\textup{-var}})
  \end{equation*}
  is continuous. By~\cite[Corollary~9.11]{Friz2010}, the map $\hbbX^{o, 2} \mapsto \langle e_I,\hbbX^{o}_T \rangle$ is continuous on bounded sets for every multi-index $I$ with respect to $d_{p\textup{-var}}$. More precisely, the map
  \begin{equation*}
    (\iota(K_L),d_{p\textup{-var}}) \ni \hbbX^{o, 2} \mapsto \hbbX^{o, N} \in (C_o^{p\textup{-var}}([0,T];G^N(\R^\hd)),d_{p\textup{-var}}),
  \end{equation*}
  is continuous with respect to $d_{p\textup{-var}}$, for every $N \geq 3$. Moreover, the evaluation map
  \begin{equation*}
	(C_o^{p\textup{-var}}([0,T];G^N(\R^\hd)),d_{p\textup{-var}}) \ni \hbbX^{o, N} \mapsto \hbbX^{o, N}_T \in (G^N(\R^\hd),\rho)
  \end{equation*}
  is continuous, where $\rho$ denotes the metric induced by the norm on $T_1^N(\R^\hd)$. Here, we used that we can equip $G^N(\R^\hd)$ with the metric $\rho$, see e.g.~\cite[Remark~7.31]{Friz2010}.
    
  Since $\hbbX^{o, N}_T \mapsto \langle e_I, \hbbX^{o}_T \rangle$ is continuous for any multi-index $I$, we can thus conclude that the map
  \begin{equation*}
	(G^N(\R^\hd),\rho) \ni \hbbX^{o, N}_T \mapsto \langle e_I, \hbbX_T^{o} \rangle \in \R
  \end{equation*}
  is continuous, and so is
  \begin{equation*}
    (K_L, d_{p,\p\textup{-var}}) \ni \hbX = (\hX,\hbbX^{(2)}) \mapsto \langle e_I, \hbbX^{o}_T \rangle \in \R.
  \end{equation*}
  By Proposition~\ref{prop: linear functional}, for every multi-index $I\in\{0,\ldots,\hd-1\}^N$, $N\in\N_0$, there exists $\ell^I \in T(\R^\hd)$ such that $\langle e_I, \hbbX_T \rangle = \langle \ell^I, \hbbX^{o}_T \rangle$. This finally yields that
  \begin{equation*}
    (K_L, d_{p,\p\textup{-var}}) \ni \hbX \mapsto \langle e_I, \hbbX_T \rangle \in \R
  \end{equation*}
  is continuous with respect to $d_{p,\p\textup{-var}}$.
   
  \emph{(ii):} By Proposition~\ref{prop: quasi shuffle property}, the quasi-shuffle property holds and thus $\mathcal A$ is a subalgebra. Moreover, since $\langle e_\emptyset, \hbbX_T \rangle = 1$, it contains a non-zero constant function.
	
  \emph{(iii):} For the point separation, let us consider $\hbX=(\hX,\hbbX^{(2)})$, $\widehat{\mathbf{Y}} =(\widehat{Y},\hbbY^{(2)})\in K_L$, with $\hbX \neq \widehat{\mathbf{Y}}$. We show that there exists a $k \in \N$, $I \in \{0,\ldots,\hd-1\}^N$, $N \in \{0,1,2\}$, such that
  \begin{equation*}
    \langle (e_I \shuffle e_0^{\otimes k}) \otimes e_0, \hbbX_T \rangle \neq \langle (e_I \shuffle e_0^{\otimes k}) \otimes e_0, \hbbY_T \rangle.
  \end{equation*}
  We proceed with a proof by contradiction. Assume that for all $k \in \N$, $I \in \{0, \ldots, \hd-1\}^N$, $N \in \{0,1,2\}$, we have
  \begin{equation*}
    \langle (e_I \shuffle e_0^{\otimes k}) \otimes e_0, \hbbX_T \rangle = \langle (e_I \shuffle e_0^{\otimes k}) \otimes e_0, \hbbY_T\rangle.
  \end{equation*}
  We first note that
  \begin{equation*}
	\langle e_0^{\otimes k}, \hbbX_t \rangle = \frac{t^k}{k!}.
  \end{equation*}
  Moreover, using the quasi-shuffle property of $\hbbX$, see Definition~\ref{def: quasi shuffle}, we have that
  \begin{equation*}
    \langle e_I \widetilde{\shuffle} e_0^{\otimes k}, \hbbX_t \rangle = \langle e_I \shuffle e_0^{\otimes k}, \hbbX_t \rangle,
  \end{equation*}
  because $\langle \epsilon_{i0}, \hbbX \rangle = 0$ for any $i = 0, \dots, \hd-1$, i.e., the quasi-shuffle product and the shuffle product coincide. Hence, by e.g.~\cite[Proposition~C.5]{Cuchiero2023b},
  \begin{equation*}
    \langle (e_I \widetilde{\shuffle} e_0^{\otimes k}) \otimes e_0, \hbbX_T \rangle 
	= \int_0^T \langle e_I \widetilde{\shuffle} e_0^{\otimes k}, \hbbX_t \rangle \dd t = \int_0^T \langle e_I, \hbbX_t \rangle \langle e_0^{\otimes k}, \hbbX_t \rangle \dd t
	= \int_0^T \langle e_I, \hbbX_t \rangle \frac{t^k}{k!} \dd t.
  \end{equation*}
  Similarly, we have
  \begin{equation*}
    \langle (e_I \shuffle e_0^{\otimes k}) \otimes e_0, \hbbY_T \rangle = \int_0^T \langle e_I, \hbbY_t \rangle \frac{t^k}{k!} \dd t.
  \end{equation*}
  By \cite[Corollary~4.24]{Brezis2011} and because $\langle e_I, \hbbX_0 \rangle = \langle e_I, \hbbY_0 \rangle = 0$, it then follows that
  \begin{equation*}
    \langle e_I, \hbbX_t \rangle = \langle e_I, \hbbY_t \rangle,
  \end{equation*}
  for all $t \in [0,T]$ and all $I \in \{0, \ldots,\hd-1\}^N$, $N \in \{0,1,2\}$. However, this contradicts the assumption that $\hbX, \widehat{\mathbf{Y}}$ are distinct. Thus we can conclude that $\mathcal{A}$ is point-separating.
\end{proof}

\begin{remark}
  Similarly, one can derive the universal approximation property also when considering functionals of stopped rough paths. The space of stopped rough paths is defined by  $\Lambda_T=\bigcup_{t\in[0,T]}\widehat\cC^{p}([0,t];\R^{\hd})$ with the semi-metric $d_{\Lambda_T}(\hbX_{[0,t]},\widehat{\mathbf{Y}}_{[0,s]}):=|t-s|+d_{p\textup{-var};[0,t\vee s]}(\widehat{\bX^t}_{[0,t\vee s]},\widehat{\mathbf{Y}^s}_{[0,t\vee s]})$. Here $\hbX_{[0,t]}$ denotes the restriction of $\hbX$, that is defined on $[0,T]$, to the sub-interval $[0,t]$, $t\le T$, and $\widehat{\bX^t}$ denotes the stopped rough path. More precisely, $\widehat{X^t}$ is the process $(\widehat{X^t})_s=(s,X^t_s,Q(X_s^t))$, $s \in [0,T]$, that is, the process where we stop the path $X$ and the bracket process $Q(X)$, but not the time-extension, and $\widehat{\bX^t}$ denotes the rough path lift of $\widehat{X^t}$, see e.g.~\cite{Kalsi2020,Cuchiero2025,Bayer2025} for similar definitions. This then allows to approximate non-anticipative path-functionals $f\colon \Lambda_T\to \R$ by linear functionals of the signature on subsets of compact sets.
\end{remark}

\begin{remark}\label{rem: path extension} 
  If the bracket is strictly increasing, i.e.,~$t\mapsto [\bX]_t$ is strictly increasing (for at least one component $[\bX]^{ij}$, $i,j\in\{1,\ldots,d\}$), it suffices to extend the path only by the bracket terms, so to consider only $\hX = (X,Q(X))$. In particular, the quasi-shuffle property and Proposition~\ref{prop: linear functional} still hold true. For point separation one may then apply the same arguments using, for some fixed $i,j\in\{1,\dots,d\}$, $\langle (e_I \shuffle \epsilon_{ij}^{\otimes k})\otimes \epsilon_{ij},\,\hbbX_T\rangle$, for $\langle \epsilon_{ij}, \hbbX^{\gamma,\pi} \rangle = [\bX]^{ij}$.
    
  And if the bracket is given as a linear functional of the time component and the path components of $X$, i.e., $[\bX]_t^{ij}=\sum_{|I|\le 1}a^{ij}_I\langle e_I,\hbbX_t\rangle$, where $I$ is a multi-index taking values in $\{0,\dots,d\}$, it suffices to extend the path only by time, so to consider only $\hX = (\cdot, X)$.
\end{remark}

\subsection{Discussion on approximation with weakly geometric rough paths}\label{sec: discussion}

When considering weakly geometric rough paths, Theorem~\ref{thm: UAT non-geometric} clearly can be applied because the rough path bracket of a weakly geometric rough path is equal to zero. Extending the path by its rough path brackets is therefore redundant. In this case, it is already known that the universal approximation theorem holds for only time-extended weakly geometric rough paths. That is, continuous functionals of time-extended weakly geometric rough paths can be approximated arbitrarily well on compact sets by linear functionals of its signature.

For completeness, let us recall and prove the universal approximation theorem for time-extended weakly geometric rough paths. (A direct proof can be found in Appendix~\ref{appendix: proof of UAT geometric}.) For this, we consider the subspace of time-extended weakly geometric rough paths, defined by
\begin{equation*}
  \widehat{C}_o^{p\textup{-var}}([0,T];G^2(\R^{1+d})) := \{\hbbX^{o, 2} \in C_o^{p\textup{-var}}([0,T];G^2(\R^{1+d})): \langle e_0, \hbbX^{o, 2}_t \rangle = t, \, t \in [0,T]\}.
\end{equation*}

\begin{theorem}\label{thm: UAT geometric}
  Let $p \in (2,3)$. Let $K \subset \widehat{C}_o^{p\textup{-var}}([0,T];G^2(\R^{1+d}))$ be a compact subset, bounded with respect to the $p$-variation norm, and consider a continuous function $f \colon K \to \R$. Then for every $\epsilon > 0$, there exists a linear functional $\ell \in T(\R^{1+d})$ such that
  \begin{equation*}
	  \sup_{\hbbX^{o, 2} \in K} |f(\hbbX^{o, 2}) - \langle \ell, \hbbX^o_{T}\rangle| < \epsilon,
  \end{equation*}
  where $\hbbX^o$ denotes the signature of $\hX := \Pi_{(1)}(\hbbX^{o, 2})$.
\end{theorem}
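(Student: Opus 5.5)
We derive Theorem~\ref{thm: UAT geometric} from Theorem~\ref{thm: UAT non-geometric}, following the remark that brackets of weakly geometric rough paths vanish. As a fallback, we also check directly the three Stone--Weierstrass conditions used in the proof of Theorem~\ref{thm: UAT non-geometric}.

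For the direct argument we take
\begin{equation*}
  \mathcal A := \spn\{K \ni \hbbX^{o,2} \mapsto \langle e_I, \hbbX^o_T\rangle : I \in \{0,\dots,d\}^N,\, N \in \N_0\}.
\end{equation*}
\emph{Continuity:} by \cite[Corollary~9.11]{Friz2010}, Lyons' extension $\hbbX^{o,2}\mapsto \hbbX^{o,N}$ is continuous in $d_{p\textup{-var}}$ on bounded sets. Since $K$ is compact, hence bounded, composing with evaluation at $T$ and projection onto $e_I$ shows that each generator is continuous on $K$.

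\emph{Algebra:} $\hbbX^o_T \in G((\R^{1+d}))$ is group-like, so the shuffle identity $\langle e_I,\hbbX^o_T\rangle\langle e_J,\hbbX^o_T\rangle = \langle e_I\shuffle e_J,\hbbX^o_T\rangle$ holds. Thus $\mathcal A$ is closed under products, and $\langle e_\emptyset,\hbbX^o_T\rangle = 1$ supplies a nonzero constant. This step is easier than in the non-geometric case: no quasi-shuffle is needed.

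\emph{Point separation:} this is the main step, and we copy part (iii) of the proof of Theorem~\ref{thm: UAT non-geometric}. Let $\hbbX^{o,2}\neq\hbbY^{o,2}$ in $K$. Since $\langle e_0^{\otimes k},\hbbX^o_t\rangle = t^k/k!$ and the shuffle property holds, \cite[Proposition~C.5]{Cuchiero2023b} gives
\begin{equation*}
  \langle (e_I\shuffle e_0^{\otimes k})\otimes e_0, \hbbX^o_T\rangle = \int_0^T \langle e_I,\hbbX^o_t\rangle \frac{t^k}{k!}\dd t,
\end{equation*}
and the same identity holds for $\hbbY^o$. Suppose these quantities agree for all $k$ and all $|I|\le 2$. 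Then $\langle e_I,\hbbX^o_t - \hbbY^o_t\rangle$ is a continuous function orthogonal to all polynomials on $[0,T]$. By \cite[Corollary~4.24]{Brezis2011} it vanishes for all $t$, so $\Pi_2(\hbbX^o_t) = \Pi_2(\hbbY^o_t)$ for every $t$. This says exactly that $\hbbX^{o,2} = \hbbY^{o,2}$, a contradiction.

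The only delicate point is that the integral identity requires the $e_0$-component to equal $t$ with time being of bounded variation. This is guaranteed by the definition of $\widehat{C}_o^{p\textup{-var}}$, so the integral against $e_0$ is a Young/Riemann--Stieltjes integral. The Stone--Weierstrass theorem on the compact $K$ then yields $\ell$. Alternatively, one can embed $K$ into $\widehat{\cC}^p$ via $\hbbX^{o,2}\mapsto(\hX,\Pi_{(2)}\hbbX^{o,2})$ with zero bracket and invoke Theorem~\ref{thm: UAT non-geometric} directly, since then $\hbbX = \hbbX^o$. This route, however, requires handling the extra zero bracket coordinates, so we prefer the direct argument.
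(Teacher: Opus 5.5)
Your direct argument is the paper's own proof in Appendix~A, step for step, and your alternative is the paper's main-text reduction to Theorem~\ref{thm: UAT non-geometric}. The weak point is the step you call delicate, and your justification for it does not hold; the same objection applies to the paper's argument.

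\textbf{The gap.} The identity
\begin{equation*}
  \langle (e_I\shuffle e_0^{\otimes k})\otimes e_0,\hbbX^o_T\rangle=\int_0^T\langle e_I,\hbbX^o_t\rangle\frac{t^k}{k!}\dd t
\end{equation*}
needs the level-two cross terms with time to be the canonical Riemann--Stieltjes integrals $\langle e_i\otimes e_0,\hbbX^{o,2}_t\rangle=\int_0^t\langle e_i,\hbbX^{o,2}_r\rangle\dd r$. This does not follow from $\langle e_0,\hbbX^{o,2}_t\rangle=t$. A weakly geometric lift may carry an extra antisymmetric component in the directions $[e_0,e_i]$ of finite $p/2$-variation.

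For instance, let $d=1$, let $X$ be smooth with $X_0=0$ and $X\not\equiv 0$, and set
\begin{equation*}
  \hbbX^{o,2}_t:=\exp_\otimes^2(te_0)\otimes\exp_\otimes^2(X_te_1),\qquad \hbbY^{o,2}_t:=\exp_\otimes^2(te_0)\otimes\exp_\otimes^2(-X_te_1).
\end{equation*}
Both lie in $\widehat{C}_o^{p\textup{-var}}([0,T];G^2(\R^2))$. The level-two increments of $\hbbX^{o,2}$ in the directions $e_0\otimes e_0$, $e_0\otimes e_1$, $e_1\otimes e_0$, $e_1\otimes e_1$ are $\frac{(t-s)^2}{2}$, $(t-s)X_t$, $-(t-s)X_s$, $\frac12X_{s,t}^2$. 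These have finite $p/2$-variation because $p/2>1$. Yet $\langle e_1\otimes e_0,\hbbX^o_t\rangle=0\neq\int_0^tX_r\dd r$.

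By Proposition~\ref{prop: Lyons extension coincides with iterated integrals}, Lyons' extension is given by compensated Riemann sums. Inserting $\langle e_1\otimes e_0,\hbbX^o_{u,v}\rangle=-(v-u)X_u$ gives
\begin{equation*}
  \langle e_J\otimes e_0,\hbbX^o_T\rangle=\int_0^T\langle e_J,\hbbX^o_t\rangle\dd t-\int_0^T\langle e_{J'},\hbbX^o_t\rangle X_t\dd t,
\end{equation*}
where the last term appears only if $J$ ends with the letter $1$. With this, the functionals of your family vanish on $\hbbX^o$ for $I=(1),(1,0),(0,1)$ and all $k$. For example, $I=(1)$ gives $\int_0^TX_t\frac{t^k}{k!}\dd t-\int_0^T\frac{t^k}{k!}X_t\dd t=0$.

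Now $\hbbY^o$ is the image of $\hbbX^o$ under the automorphism induced by $e_1\mapsto -e_1$. Hence every functional $\langle(e_I\shuffle e_0^{\otimes k})\otimes e_0,\cdot\rangle$ with $|I|\le 2$ takes the same value on $\hbbX^o_T$ and $\hbbY^o_T$, although $\hbbX^{o,2}\neq\hbbY^{o,2}$. So your family does not separate points on the space as defined. Your alternative route via the embedding into $\widehat{\cC}^p$ does not avoid this, because step~(iii) of Theorem~\ref{thm: UAT non-geometric} rests on the same identity.

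\textbf{Two ways to close it.}
\begin{enumerate}
  \item Build the canonical time extension into the space, i.e.\ require $\langle e_i\otimes e_0,\hbbX^{o,2}_t\rangle=\int_0^t\langle e_i,\hbbX^{o,2}_r\rangle\dd r$ for $i=1,\dots,d$. This is how $\hbbX^{(2)}$ is built in Section~\ref{sec: general pathwise signatures}. Then $\langle e_i\otimes e_0,\hbbX^o_{u,v}\rangle=\int_u^v\langle e_i,\hbbX^o_{u,r}\rangle\dd r$, the correction term vanishes in the limit, and your argument goes through unchanged.
  \item Keep the space as literally defined and prove separation for the full algebra $\mathcal A$ instead of this particular family. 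If $\hbbX^o_T=\hbbY^o_T$, the concatenation of $\hbbX^{o,2}$ with the reversal of $\hbbY^{o,2}$ has trivial signature, so it is tree-like by \cite{Boedihardjo2016}. Its time coordinate is injective on each half, so the real-tree factorisation identifies the parameters $u$ and $2T-u$. This gives $\hbbX^o_{0,u}=\hbbY^o_{0,u}$ for all $u\in[0,T]$.
\end{enumerate}
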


\begin{proof}
  We first notice that it is equivalent to consider the space of weakly geometric rough paths taken as a subset of the rough path space instead of taken as the space of continuous paths with finite $p$-variation and taking values in $G^2(\R^{1+d})$. The proof then follows by an application of Theorem~\ref{thm: UAT non-geometric}: The rough path bracket of a weakly geometric rough path is equal to zero. Thus, for $L > 0$ large enough, $K_L$ does coincide with $K$. It is therefore actually sufficient to reduce the extended path to $\hX = (\cdot, X)$, i.e., $\hd = d+1$, which then implies the claim.
\end{proof}

\section{The signature using general pathwise stochastic integration}\label{sec: signature via pathwise integration}

In this section, we give an example that fits into the framework developed in the previous section. We introduce a notion of signatures using the path assumption Property $\gamma$-\textup{(RIE)}, which allows to construct pathwise (iterated) integrals as limits of general Riemann sums. It is an extension of Property $\textup{(RIE)}$, which has been established in detail in~\cite{Das2025}. We now give the path properties and the statements required in this paper. For the proofs and an equivalent and more intuitive characterization of the path property, we refer to~\cite{Das2025}.

\subsection{On Property~\texorpdfstring{$\boldsymbol{\gamma}$}{γ}-\textup{(RIE)}}

\begin{gammaRIE}
  Let $X \in C([0,T];\R^d)$, $\gamma \in [0,1]$, $p \in (2,3)$, and $\pi = (\pi^n)_{n\in \N}$, with $\pi^n = \{0 = t^n_0 < t^n_1 < \dots < t^n_{N_n} = T\}$, $n \in \N$, be a sequence of partitions of the interval $[0,T]$ such that $\sup\{|X_{t^n_k,t^n_{k+1}}| \,: k=0, \dots, N_n-1\}$ converges to $0$ as $n \to \infty$. We assume that
  \begin{enumerate}
    \item[(i)] the Riemann sums
      \begin{equation*}
        \int_0^t X_r \otimes \d^{\gamma, \pi^n} X_r := \sum_{k=0}^{N_n-1} (X_{t^n_k} + \gamma X_{t^n_k,t^n_{k+1}}) \otimes X_{t^n_k \wedge t, t^n_{k+1} \wedge t}, \quad t \in [0,T],
     \end{equation*}
     converge uniformly as $n \to \infty$ to a limit, which we denote by $\int_0^t X_r \otimes \d^{\gamma,\pi} X_r$,
     \item[(ii)] there exists a control function $c$ such that
     \begin{equation}\label{eq: bounded p/2 variation assumption}
       \sup_{(s,t) \in \Delta_T} \frac{|X_{s,t}|^p}{c(s,t)} + \sup_{n \in \N} \sup_{0 \le k < \ell \le N_n} \frac{| (\int_0^{\cdot} X_r \otimes \d^{\gamma,\pi^n} X_r)_{t^n_k,t^n_\ell} - X_{t^n_k} \otimes X_{t^n_k,t^n_\ell}|^{\p}}{c(t^n_k,t^n_\ell)} \lesssim 1.
    \end{equation}
  \end{enumerate}
\end{gammaRIE}

We say that a path $X \in C([0,T];\R^d)$ satisfies \emph{Property~$\gamma$-\textup{(RIE)}} relative to $\gamma$, $p$ and $\pi$ if $\gamma$, $p$, $\pi$ and $X$ together satisfy Property $\gamma$-\textup{(RIE)}.

\begin{proposition}[Proposition~2.9 in \cite{Das2025}]\label{prop: rough path lift under gamma RIE}
  Suppose that $X \in C([0,T];\R^d)$ satisfies Property $\gamma$-\textup{(RIE)} relative to some $\gamma \in [0,1]$, $p \in (2,3)$ and a sequence of partitions $\pi = (\pi^n)_{n \in \N}$. Then, $X$ canonically extends to a continuous rough path $\bX^{\gamma,\pi} := (X, \X^{\gamma,\pi,(2)}) \in \cC^p([0,T];\R^d)$, where
  \begin{equation}\label{eq: rough path lift under gamma RIE}
    \X^{\gamma,\pi,(2)}_{s,t} := \int_0^t X_r \otimes \d^{\gamma,\pi} X_r - \int_0^s X_r \otimes \d^{\gamma,\pi} X_r - X_s \otimes X_{s,t}, \qquad (s,t) \in \Delta_T.
  \end{equation}
\end{proposition}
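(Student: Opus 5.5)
We have to verify the two defining conditions of a rough path in $\cC^p([0,T];\R^d)$ for the pair $(X,\X^{\gamma,\pi,(2)})$ given by \eqref{eq: rough path lift under gamma RIE}: Chen's relation, and the regularity conditions $X \in C^{p\textup{-var}}$ and $\X^{\gamma,\pi,(2)} \in C_2^{\p\textup{-var}}$, including continuity. Write $I_t := \int_0^t X_r \otimes \d^{\gamma,\pi} X_r$. By Property~$\gamma$-(RIE)(i) this is the uniform limit of the continuous functions $I^n_t := \int_0^t X_r\otimes \d^{\gamma,\pi^n}X_r$, so $I$ is continuous. Hence $\X^{\gamma,\pi,(2)}_{s,t} = I_{s,t} - X_s\otimes X_{s,t}$ is continuous on $\Delta_T$.

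\emph{Chen's relation} is purely algebraic. For $s\le u\le t$,
\begin{equation*}
  \X^{(2)}_{s,u}+\X^{(2)}_{u,t} = I_{s,t} - X_s\otimes X_{s,u} - X_u\otimes X_{u,t} = \X^{(2)}_{s,t} + X_s\otimes X_{u,t} - X_u\otimes X_{u,t},
\end{equation*}
and $X_s\otimes X_{u,t} - X_u \otimes X_{u,t} = -X_{s,u}\otimes X_{u,t}$. This gives Chen's relation.

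\emph{Regularity.} The bound $\|X\|_p<\infty$ is the first term of \eqref{eq: bounded p/2 variation assumption}. The main work, and the expected obstacle, is the estimate $|\X^{(2)}_{s,t}|^{p/2} \lesssim c(s,t)$ for arbitrary $(s,t)$, not only for partition points. Assumption (ii) gives this only along partition points: with $\X^{n}_{s,t} := I^n_{s,t} - X_s\otimes X_{s,t}$, we have $|\X^n_{t^n_k,t^n_\ell}|^{p/2}\lesssim c(t^n_k,t^n_\ell)$ uniformly in $n$.

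For general $s<t$, I would choose $k_n,\ell_n$ such that $t^n_{k_n}$ is the first partition point $\ge s$ and $t^n_{\ell_n}$ is the last partition point $\le t$. If no partition point lies in $[s,t]$, the step below is skipped. By Chen's relation for $\X^n$, which holds by the same algebra as above since $I^n$ is additive, we decompose
\begin{equation*}
  \X^n_{s,t} = \X^n_{s,t^n_{k_n}} + \X^n_{t^n_{k_n},t^n_{\ell_n}} + \X^n_{t^n_{\ell_n},t} + \text{cross terms } X_{a,b}\otimes X_{b,c}.
\end{equation*}
The cross terms are bounded by $c(s,t)^{2/p}$. The middle term is bounded by (ii), using superadditivity of $c$. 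The boundary pieces lie inside a single partition interval, so they are explicit: for $s\le a\le b \le t^n_{k+1}$ within $[t^n_k,t^n_{k+1}]$, one finds that $\X^n_{a,b}$ is of the form $\gamma X_{t^n_k,t^n_{k+1}}\otimes X_{a,b}$ plus a product of increments. Its size is therefore $\lesssim \sup_k|X_{t^n_k,t^n_{k+1}}|\cdot|X_{a,b}| + |X_{a,b}|^2$, and this tends to $0$ as $n \to \infty$ by the mesh assumption on $\pi$. Passing to the limit $n\to\infty$, using uniform convergence $I^n\to I$, gives $|\X^{\gamma,\pi,(2)}_{s,t}|^{p/2}\lesssim c(s,t)$.

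This bound immediately yields $\|\X^{\gamma,\pi,(2)}\|_{p/2}^{p/2}\lesssim c(0,T)<\infty$ by superadditivity of $c$. This completes the proof that $\bX^{\gamma,\pi}\in\cC^p$.
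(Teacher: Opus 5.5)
Your continuity argument and the algebraic check of Chen's relation are correct. Your strategy for the $\p$-variation bound is the standard argument: split $[s,t]$ at the first partition point $u=t^n_j\ge s$ and the last one $v=t^n_m\le t$, bound $\X^n_{u,v}$ by (ii), compute the edge pieces explicitly, and let $n\to\infty$. The paper gives no proof of its own here, only the citation. When $u\le v$ your argument works. The edges are $\X^n_{s,u}=\bigl((\gamma-1)X_{t^n_{j-1},u}+X_{s,u}\bigr)\otimes X_{s,u}$ and $\X^n_{v,t}=\gamma X_{v,t^n_{m+1}}\otimes X_{v,t}$. Under the stated hypothesis $|X_{s,u}|^2$ need not tend to zero, but it is $\lesssim c(s,t)^{2/p}$, which is enough.

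\textbf{The gap is the case you skip}, namely $t^n_k<s<t<t^n_{k+1}$. There
\[
\X^n_{s,t}=\bigl(\gamma X_{t^n_k,t^n_{k+1}}-X_{t^n_k,s}\bigr)\otimes X_{s,t},
\]
and the term $X_{t^n_k,s}\otimes X_{s,t}$ is neither small nor bounded by $c(s,t)^{2/p}$. Your single-cell estimate $\lesssim\sup_k|X_{t^n_k,t^n_{k+1}}|\,|X_{a,b}|+|X_{a,b}|^2$ holds only when $a$ or $b$ is a partition point.

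To close this case, you need it to occur for only finitely many $n$ for each fixed $s<t$, that is, $|\pi^n|\to0$. Alternatively, you need the oscillation of $X$ over the cells of $\pi^n$ to vanish, so that $X_{t^n_k,s}\to0$. This is the ``mesh assumption'' you invoke. However, Property $\gamma$-(RIE) as formulated in the paper only requires $\sup_k|X_{t^n_k,t^n_{k+1}}|\to0$, and that does not suffice.

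Here is a counterexample. Take $d=1$, $T=2$, $\pi^n=\{0,1,2\}$ for all $n$. Let $X$ vanish at $0$ and on $[1,2]$, have finite $p$-variation, and satisfy $|X|\ge\delta>0$ on an interval $J\subset[0,1]$ on which $X$ has infinite $\p$-variation. Then all Riemann sums vanish, and (i) and (ii) hold with $c(s,t)=\|X\|^p_{p,[s,t]}$. Yet $\X^{\gamma,\pi,(2)}_{s,t}=-X_sX_{s,t}$ has infinite $\p$-variation on $J$.

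So you must assume vanishing mesh explicitly. The paper imposes it wherever the result is actually used, e.g.\ in Section~\ref{sec: application to continuous semimartingales} and Theorem~\ref{thm: rough int against controlled path under RIE}. With that assumption, for fixed $s<t$ take $n$ large enough that $[s,t]$ contains a partition point, and your argument is complete.
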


We note that $\bX^{0,\pi}$ corresponds to the It{\^o}-rough path lift, $\bX^{\frac{1}{2},\pi}$ corresponds to the Stra\-to\-no\-vich\--rough path lift, and $\bX^{1,\pi}$ corresponds to the backward It{\^o} rough path lift of a stochastic process, since the ``iterated integral'' $\X^{0,\pi,(2)}$, $\X^{\frac{1}{2},\pi,(2)}$, and $\X^{1,\pi,(2)}$ is given as a limit of left-point, mid-point, and right-point Riemann sums, analogously to the stochastic It{\^o}, Stratonovich, and backward It{\^o} integral, respectively, see \cite[Chapter~5.4]{Friz2020} on backward rough integration for Brownian motion, and Section~\ref{sec: application to continuous semimartingales}. 

When assuming Property $\gamma$-\textup{(RIE)} for a path $X$, we will always work with the rough path $\bX^{\gamma,\pi} = (X,\X^{\gamma,\pi,(2)})$ defined via \eqref{eq: rough path lift under gamma RIE}.

\begin{lemma}[Lemma~2.10 in \cite{Das2025}]\label{lemma: quadratic variation under gamma RIE}
  Suppose that $X \in C([0,T];\R^d)$ satisfies Property $\gamma$-\textup{(RIE)} relative to some $\gamma \in [0,1]$, $p \in (2,3)$ and a sequence of partitions $\pi^n = \{0 = t^n_0 < \dots < t^n_{N_n} = T\}$, $n \in \N$. Let $1 \leq i, j \leq d$, and define for $\gamma = \frac{1}{2}$, $[X^i,X^j]^{\gamma,\pi} := 0$, and for $\gamma \neq \frac{1}{2}$,
  \begin{equation*}
    [X^i,X^j]^{\gamma,\pi}_t := X^i_t X^j_t - X^i_0 X^j_0 - \int_0^t X^i_r \dd^{\gamma,\pi} X^j_r - \int_0^t X^j_r \dd^{\gamma,\pi} X^i_r, \qquad t \in [0,T].
  \end{equation*}
  Then, $[X^i,X^j]^{\gamma,\pi}$ is a continuous function and
  \begin{equation*}
    [X^i,X^j]^{\gamma,\pi}_t = \lim_{n \to \infty} [X^i,X^j]^{\gamma,\pi^n}_t := \lim_{n \to \infty} (1-2\gamma) \sum_{k=0}^{N_n-1} X^i_{t^n_k \wedge t, t^n_{k+1} \wedge t} X^j_{t^n_k \wedge t, t^n_{k+1} \wedge t},\quad t \in [0,T].
  \end{equation*}
  The sequence $([X^i,X^j]^{\gamma,\pi^n})_{n \in \N}$ has uniformly bounded $1$-variation and, thus, $[X^i,X^j]^{\gamma,\pi}$ has finite $1$-variation. We write $[X]^{\gamma,\pi} = [X,X]^{\gamma,\pi} = ([X^i,X^j]^{\gamma,\pi})_{1 \leq i,j \leq d}$, and, analogously, $[X]^{\gamma,\pi^n}$, $n \in \N$.
\end{lemma}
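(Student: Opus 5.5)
We plan to prove Lemma~\ref{lemma: quadratic variation under gamma RIE} by working first at the level of the discrete Riemann sums and then passing to the limit using Property~$\gamma$-\textup{(RIE)}(i). Fix $1 \le i,j \le d$ and $t \in [0,T]$. Write $\Delta_k := X_{t^n_k \wedge t, t^n_{k+1} \wedge t}$.

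The starting point is the elementary telescoping identity
\begin{equation*}
  X^i_t X^j_t - X^i_0 X^j_0 = \sum_{k=0}^{N_n-1} \big( X^i_{t^n_k\wedge t}\Delta^j_k + X^j_{t^n_k \wedge t}\Delta^i_k + \Delta^i_k \Delta^j_k \big).
\end{equation*}
The definition of the $\gamma$-Riemann sums uses $X_{t^n_k} + \gamma X_{t^n_k,t^n_{k+1}}$ and not the stopped quantities. We therefore first observe that only the single interval containing $t$ behaves differently. On that interval the discrepancy is bounded by a constant times $\sup_k |X_{t^n_k,t^n_{k+1}}|\,\|X\|_\infty$, which tends to $0$ by the mesh assumption on $\pi$. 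Up to such a vanishing error, we may therefore rewrite
\begin{equation*}
  \int_0^t X^i_r \dd^{\gamma,\pi^n} X^j_r + \int_0^t X^j_r \dd^{\gamma,\pi^n} X^i_r = \sum_k \big(X^i_{t^n_k\wedge t}\Delta^j_k + X^j_{t^n_k\wedge t}\Delta^i_k\big) + 2\gamma \sum_k \Delta^i_k\Delta^j_k .
\end{equation*}
Subtracting the two displays gives
\begin{equation*}
  X^i_tX^j_t - X^i_0X^j_0 - \int_0^t X^i_r \dd^{\gamma,\pi^n} X^j_r - \int_0^t X^j_r \dd^{\gamma,\pi^n} X^i_r = [X^i,X^j]^{\gamma,\pi^n}_t + o(1),
\end{equation*}
where the error is uniform in $t$. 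By Property~$\gamma$-\textup{(RIE)}(i), the left-hand side converges uniformly in $t$ to $[X^i,X^j]^{\gamma,\pi}_t$. This yields both the limit representation and, via uniform convergence of continuous functions, the continuity of $[X^i,X^j]^{\gamma,\pi}$; note that the limit exists for all $\gamma$. For $\gamma = \tfrac12$ the prefactor $1-2\gamma$ vanishes, so the convention $[X^i,X^j]^{\gamma,\pi}:=0$ is consistent with the formula.

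The step we expect to be the main obstacle is the uniform bound on the $1$-variation. For the diagonal entries $i=j$ and $\gamma \ne \tfrac12$, the map $t\mapsto [X^i,X^i]^{\gamma,\pi^n}_t$ is monotone, being $(1-2\gamma)$ times a nondecreasing function. Its $1$-variation is therefore $|[X^i,X^i]^{\gamma,\pi^n}_T|$, which converges and hence is bounded in $n$. For off-diagonal entries we will polarize. Applying the diagonal argument to the path $X^i+X^j$, which satisfies the same Riemann-sum convergence by bilinearity, we write $[X^i,X^j]^{\gamma,\pi^n} = \tfrac12\big([X^i+X^j]^{\gamma,\pi^n} - [X^i]^{\gamma,\pi^n} - [X^j]^{\gamma,\pi^n}\big)$. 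This expresses it as a difference of monotone functions with convergent, hence bounded, endpoint values. Alternatively, we can use the Cauchy--Schwarz bound $\sum |\Delta^i_k\Delta^j_k| \le (\sum|\Delta^i_k|^2)^{1/2}(\sum|\Delta^j_k|^2)^{1/2}$ over any subinterval.

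Finally, lower semicontinuity of the $1$-variation under pointwise convergence transfers the uniform bound to the limit $[X^i,X^j]^{\gamma,\pi}$. We do not expect condition~(ii) of Property~$\gamma$-\textup{(RIE)} to be needed beyond guaranteeing the setting.
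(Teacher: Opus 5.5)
There are two genuine gaps. Your summation-by-parts identification of the limit and your continuity argument are the natural route, but the $1$-variation step rests on a false claim. The map $t\mapsto\sum_k (X^i_{t^n_k\wedge t,t^n_{k+1}\wedge t})^2$ is nondecreasing only along the points of $\pi^n$. On each $[t^n_k,t^n_{k+1}]$ it equals a constant plus $(X^i_{t^n_k,t})^2$, which rises and falls with $X^i$.

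Since $|x^2-y^2|=|x-y|\,|x+y|$, on any subinterval where $X^i_{t^n_k,\cdot}$ keeps one sign and stays away from $0$, its $1$-variation dominates a multiple of that of $X^i$. So for a path of infinite $1$-variation (e.g.\ a Brownian sample path), every $[X^i,X^i]^{\gamma,\pi^n}$ with $\gamma\neq\frac12$ has infinite $1$-variation on $[0,T]$. Polarization inherits this defect, and your Cauchy--Schwarz alternative only controls increments between partition points. A uniform bound is therefore available only for the discrete object.

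One correct route is to put $\widetilde Q^n_t:=(1-2\gamma)\sum_{k:\,t^n_{k+1}\le t}X^i_{t^n_k,t^n_{k+1}}X^j_{t^n_k,t^n_{k+1}}$. Its $1$-variation is at most $|1-2\gamma|\big(\sum_k (X^i_{t^n_k,t^n_{k+1}})^2\big)^{1/2}\big(\sum_k (X^j_{t^n_k,t^n_{k+1}})^2\big)^{1/2}$. This is bounded in $n$, because at $t=T$ your identity holds with no boundary term, so the diagonal sums converge by (i). Moreover $|\widetilde Q^n_t-[X^i,X^j]^{\gamma,\pi^n}_t|\le|1-2\gamma|\,\omega_n^2$, where $\omega_n:=\max_k\sup_{u,v\in[t^n_k,t^n_{k+1}]}|X_{u,v}|$. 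Hence $\widetilde Q^n\to[X^i,X^j]^{\gamma,\pi}$ pointwise once $\omega_n\to0$, and lower semicontinuity then gives finite $1$-variation of the limit.

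The second gap is your boundary estimate. On the interval $[t^n_k,t^n_{k+1}]\ni t$ the discrepancy is $\gamma\big(X^i_{t,t^n_{k+1}}X^j_{t^n_k,t}+X^j_{t,t^n_{k+1}}X^i_{t^n_k,t}\big)$. This involves increments over sub-intervals, so the correct bound is $2\gamma\,\omega_n^2$, not a multiple of $\sup_k|X_{t^n_k,t^n_{k+1}}|\,\|X\|_\infty$. You therefore need $\omega_n\to0$. That follows from $|\pi^n|\to0$ and uniform continuity of $X$, but not from $\sup_k|X_{t^n_k,t^n_{k+1}}|\to0$ alone.

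To see this, take $d=1$, let $X$ vanish outside $(a,b)$ and make a smooth excursion inside, and let $a,b$ be consecutive points of every $\pi^n$. Then Property $\gamma$-\textup{(RIE)} holds with all Riemann sums identically zero. Yet the defined bracket is $(X_{a,t})^2\neq(1-2\gamma)(X_{a,t})^2$ for $t\in(a,b)$ and $\gamma\notin\{0,\frac12\}$. So vanishing mesh has to be assumed and used explicitly, both here and when transferring the variation bound to the limit.
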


By a slight extension (to allow non-nested partitions) of \cite[Proposition~2.18]{Allan2023b}, the rough path bracket $[\bX^{\gamma,\pi}]$ coincides with $(1-2\gamma)[X]$, where $[X]$ denotes the quadratic variation of $X$ along $\pi$ in the sense of F{\"o}llmer~\cite{Follmer1981}, equal to $[X]^{\gamma,\pi}$.

\medskip

We will actually continue working under Property $\gamma$-\textup{(RIE)}, as it is more general, but we briefly want to point out the theoretical relation to Property \textup{(RIE)}, which has been introduced in~\cite{Perkowski2016} and~\cite{Allan2023a}.

\begin{RIE}
  Let $X \in C([0,T];\R^d)$, $p \in (2,3)$, and $\pi = (\pi^n)_{n\in \N}$, with $\pi^n = \{0 = t^n_0 < t^n_1 < \dots < t^n_{N_n} = T\}$, $n \in \N$, be a sequence of partitions of the interval $[0,T]$ such that $\sup\{|X_{t^n_k,t^n_{k+1}}| \,: k=0, \dots, N_n-1\}$ converges to $0$ as $n \to \infty$. We assume that
  \begin{enumerate}
    \item[(i)] the left-point Riemann sums
    \begin{equation*}
      \int_0^t X_r \otimes \d^{\pi^n} X_r := \sum_{k=0}^{N_n-1} X_{t^n_k} \otimes X_{t^n_k \wedge t, t^n_{k+1} \wedge t}, \quad t \in [0,T],
    \end{equation*}
    converge uniformly as $n \to \infty$ to a limit, which we denote by $\int_0^t X_r \otimes \d^\pi X_r$,
    \item[(ii)] there exists a control function $c$ such that
    \begin{equation*}
      \sup_{(s,t) \in \Delta_T} \frac{|X_{s,t}|^p}{c(s,t)} + \sup_{n \in \N} \sup_{0 \le k < \ell \le N_n} \frac{|(\int_0^{\cdot} X_r \otimes \d^{\pi^n} X_r)_{t^n_k,t^n_\ell} - X_{t^n_k} \otimes X_{t^n_k,t^n_\ell}|^{\p}}{c(t^n_k,t^n_\ell)} \lesssim 1.
    \end{equation*}
  \end{enumerate}
\end{RIE}

We say that a path $X \in C([0,T];\R^d)$ satisfies \emph{Property \textup{(RIE)}} relative to $p$ and $\pi$ if $p$, $\pi$ and $X$ together satisfy Property \textup{(RIE)}.

\begin{lemma}[Lemma~2.16 in \cite{Das2025}]\label{lemma: RIE eq to gamma RIE}
  Let $X \in C([0,T];\R^d)$, $\gamma \in [0,1]$, $p \in (2,3)$ and $\pi = (\pi^n)_{n\in \N}$ be a sequence of partitions.
  \begin{enumerate}
    \item[(i)] Suppose $\gamma \neq \frac{1}{2}$. Then, $X$ satisfies Property \textup{(RIE)} (i) relative to $\pi$ if and only if $X$ satisfies Property $\gamma$-\textup{(RIE)} (i) relative to $\gamma$ and $\pi$, and $X$ satisfies Property \textup{(RIE)} (ii) relative to $p$ and $\pi$ if and only if $X$ satisfies Property $\gamma$-\textup{(RIE)} (ii) relative to $\gamma$, $p$ and $\pi$.
    \item[(ii)] Suppose $\gamma = \frac{1}{2}$. If $X$ satisfies Property \textup{(RIE)} (i) relative to $\pi$, then $X$ satisfies Property $\gamma$-{\textup{(RIE)}} (i) relative to $\gamma$ and $\pi$, and if $X$ satisfies Property \textup{(RIE)} (ii) relative to $p$ and $\pi$, then $X$ satisfies Property $\gamma$-\textup{(RIE)} (ii) relative to $\gamma$, $p$ and $\pi$.
  \end{enumerate}
\end{lemma}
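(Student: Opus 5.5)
The statement to prove is Lemma~\ref{lemma: RIE eq to gamma RIE}. The plan is to express the $\gamma$-Riemann sums through the left-point sums plus a discrete quadratic variation term. For every $n$ and $t$ we have the identity
\begin{equation*}
  \int_0^t X_r \otimes \d^{\gamma,\pi^n} X_r = \int_0^t X_r \otimes \d^{\pi^n} X_r + \gamma \sum_{k} X_{t^n_k\wedge t,t^n_{k+1}\wedge t} \otimes X_{t^n_k\wedge t,t^n_{k+1}\wedge t},
\end{equation*}
up to a boundary term at the cell containing $t$. That term is bounded by $\sup_k |X_{t^n_k,t^n_{k+1}}|^2 \to 0$ uniformly, so it is harmless. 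Call the discrete bracket $S^n_t := \sum_k X_{t^n_k\wedge t,t^n_{k+1}\wedge t}\otimes X_{t^n_k\wedge t,t^n_{k+1}\wedge t}$.

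The key algebraic fact is the discrete integration by parts identity
\begin{equation*}
  X_t\otimes X_t - X_0\otimes X_0 = \int_0^t X\otimes \d^{\pi^n}X + \Big(\int_0^t X\otimes \d^{\pi^n}X\Big)^{\top} + S^n_t.
\end{equation*}
From it, the symmetric part of the left-point sums determines $S^n$. Similarly, the symmetric part of the $\gamma$-sums equals $\frac12(X_t\otimes X_t - X_0\otimes X_0) - \frac{1-2\gamma}{2} S^n_t$. The antisymmetric parts of the two sums coincide, since $S^n$ is symmetric. Hence
\begin{equation*}
  \int X\otimes\d^{\gamma,\pi^n}X = \mathbb{A}\Big(\int X\otimes \d^{\pi^n}X\Big) + \tfrac12(X\otimes X - X_0\otimes X_0) - \tfrac{1-2\gamma}{2}S^n .
\end{equation*}
For (i)(i) with $\gamma\neq\frac12$, this relation is invertible. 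The left-point sums converge uniformly if and only if $S^n$ and the antisymmetric part converge uniformly, and the $\gamma$-sums recover both. Their symmetric part gives $S^n$, because $1-2\gamma\neq0$. For (ii)(i) with $\gamma=\frac12$, the $S^n$ term drops out, so only the implication from left-point convergence survives. Conversely, $\frac12$-convergence gives no information on $S^n$, which explains why the statement is one-directional there.

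For the $p/2$-variation bounds (ii), the same identities apply on increments. The second-order quantity $\Xi^n_{k,\ell} := (\int X\otimes\d^{\gamma,\pi^n}X)_{t^n_k,t^n_\ell} - X_{t^n_k}\otimes X_{t^n_k,t^n_\ell}$ splits as $\Xi^{0,n}_{k,\ell} + \gamma\, S^n_{t^n_k,t^n_\ell}$. Symmetrizing, we get $\mathbb{S}(\Xi^{0,n}_{k,\ell}) = \frac12 X_{t^n_k,t^n_\ell}^{\otimes 2} - \frac12 S^n_{t^n_k,t^n_\ell}$, and the same computation for general $\gamma$ gives $\mathbb{S}(\Xi^n_{k,\ell}) = \frac12 X_{t^n_k,t^n_\ell}^{\otimes2} - \frac{1-2\gamma}{2}S^n_{t^n_k,t^n_\ell}$. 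Since $|X_{s,t}|^{p/2}\le c(s,t)$, bounding $|\Xi|^{p/2}\lesssim c$ for one sum bounds $|S^n_{t^n_k,t^n_\ell}|^{p/2}\lesssim c$. For $\gamma\neq\frac12$ this holds in either direction. The antisymmetric parts agree, so the other sum is bounded by the same control up to a constant; one may replace $c$ by $Cc$, which is still a control. For $\gamma=\frac12$ only the direction from (RIE) holds.

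The main obstacle is bookkeeping rather than analysis. One must check that the truncation at $t$ (the $\wedge t$ in the sums) does not break the identities, which is handled by the vanishing-oscillation assumption on $\pi$. One must also track that the relevant increments are only taken at partition points $t^n_k,t^n_\ell$, where the identities are exact.
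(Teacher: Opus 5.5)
Your treatment of the $p/2$-variation conditions is correct. There, only partition points $t^n_k,t^n_\ell$ enter, where $\Xi^n_{k,\ell}=\Xi^{0,n}_{k,\ell}+\gamma S^n_{t^n_k,t^n_\ell}$ and the discrete integration by parts hold exactly, and your symmetric/antisymmetric bookkeeping gives exactly the claimed implications. (The paper has no proof of its own to compare with; it cites \cite{Das2025}.)

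The gap is in the uniform-convergence part, at the boundary term you call harmless. For $t\in[t^n_k,t^n_{k+1})$ one has exactly
\begin{equation*}
  \int_0^t X_r\otimes\d^{\gamma,\pi^n}X_r-\int_0^t X_r\otimes\d^{\pi^n}X_r-\gamma S^n_t
  =\gamma X_{t,t^n_{k+1}}\otimes X_{t^n_k,t}
  =\gamma X_{t^n_k,t^n_{k+1}}\otimes X_{t^n_k,t}-\gamma X_{t^n_k,t}\otimes X_{t^n_k,t}.
\end{equation*}
The first term is at most $2\gamma\|X\|_\infty\sup_j|X_{t^n_j,t^n_{j+1}}|\to0$. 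The second term, however, is the square of the \emph{within-cell} increment $X_{t^n_k,t}$, and $\sup_j|X_{t^n_j,t^n_{j+1}}|$ does not control it. For example, if $X_{t^n_k}=X_{t^n_{k+1}}$ and $X$ makes an excursion of size one inside the cell, this term has norm $\gamma$, while $\sup_j|X_{t^n_j,t^n_{j+1}}|^2$ can be arbitrarily small. So your bound is false.

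The hypothesis on $\pi$ is vanishing increments between consecutive partition points, not vanishing oscillation inside cells, so it does not cover this. Moreover, the missing term is symmetric, so it lands exactly in the part of your argument that recovers $S^n$. The correct relation, uniformly in $t$, is
\begin{equation*}
  \mathbb{S}\Big(\int_0^t X_r\otimes\d^{\gamma,\pi^n}X_r\Big)
  =\tfrac12\big(X_t\otimes X_t-X_0\otimes X_0\big)-\tfrac{1-2\gamma}{2}S^n_t-\gamma\,X_{t^n_k,t}\otimes X_{t^n_k,t}+o(1).
\end{equation*}
In particular, for $\gamma=\frac12$ the term $-\frac12X_{t^n_k,t}\otimes X_{t^n_k,t}$ does not drop out. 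So your argument does not yet establish even the one-sided implication for the Riemann sums when $\gamma=\frac12$.

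\textbf{Closing the gap with vanishing mesh.} If you additionally assume $|\pi^n|\to0$ (as in Theorem~\ref{thm: rough int against controlled path under RIE} and the semimartingale results), uniform continuity of $X$ gives $\sup_k\sup_{u\in[t^n_k,t^n_{k+1}]}|X_{t^n_k,u}|\to0$. The extra term is then $o(1)$ uniformly, and your proof goes through unchanged.

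\textbf{Closing the gap under the stated hypothesis.} Here you need an extra step: show that $B^n_t:=X_{t^n_{k(t)},t}\otimes X_{t^n_{k(t)},t}$ converges uniformly whenever either family of sums does. One way, assuming $S^n$ converges uniformly, is as follows.
\begin{itemize}
  \item Fix $t$. Let $[u,u^+)$ be the $\pi^n$-cell containing $t$, and let $v$ be the last point of $\pi^m$ in $[0,t]$.
  \item If $u<v$, let $w$ be the last point of $\pi^m$ in $[0,u^+]$.
  \item Combine the exact formula $\mathrm{tr}\,S^n_r=\mathrm{tr}\,S^n_u+|X_{u,r}|^2$ for $r\in[u,u^+]$ with the fact that $\mathrm{tr}\,S^m$ is nondecreasing along points of $\pi^m$.
\end{itemize}
This gives $|X_{u,v}|^2\le\sup_j|X_{t^n_j,t^n_{j+1}}|^2+4\|\mathrm{tr}(S^n-S^m)\|_\infty$, so $(B^n)$ is uniformly Cauchy. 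A similar comparison handles the converse direction for $\gamma\neq\frac12$; when $1-2\gamma<0$, use instead the last point of $\pi^m$ to the left of $u$.
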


Analogously to Property~\textup{(RIE)}, see~\cite[Proposition~2.12]{Allan2023c}, Property $\gamma$-\textup{(RIE)} is stable under perturbation by a path of finite $q$-variation for $q \in (1,2)$, which then falls into the regime of Young integration. The proof of the following lemma can be found in Appendix~\ref{appendix: proof of lemma}. 

\begin{lemma}\label{lemma: stability of gamma RIE}
  Suppose that $X \in C([0,T];\R^d)$ satisfies Property $\gamma$-\textup{(RIE)} relative to some $\gamma \in [0,1]$, $p \in (2,3)$ and a sequence of partitions $\pi = (\pi^n)_{n \in \N}$. Let $\phi \in C^{q\textup{-var}}([0,T];\R^d)$ for some $q \in [1,2)$ such that $1/p + 1/q > 1$ and $\sup\{|\phi_{t^n_k,t^n_{k+1}}| : k = 0, \dots, N_n - 1\}$ converges to $0$ as $n \to \infty$. Then the path $\hX = X + \phi$ satisfies Property $\gamma$-\textup{(RIE)} relative to $\gamma$, $p$ and $\pi$.
\end{lemma}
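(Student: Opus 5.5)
We verify (i) and (ii) of Property $\gamma$-\textup{(RIE)} for $\hX = X+\phi$ directly, by expanding the Riemann sums bilinearly. For each $n$ and $t\in[0,T]$,
\begin{align*}
\int_0^t \hX_r \otimes \d^{\gamma,\pi^n}\hX_r
&= \int_0^t X_r\otimes \d^{\gamma,\pi^n} X_r + \sum_{k} (X_{t^n_k}+\gamma X_{t^n_k,t^n_{k+1}})\otimes \phi_{t^n_k\wedge t,t^n_{k+1}\wedge t}\\
&\quad + \sum_k (\phi_{t^n_k}+\gamma\phi_{t^n_k,t^n_{k+1}})\otimes X_{t^n_k\wedge t,t^n_{k+1}\wedge t} + \sum_k (\phi_{t^n_k}+\gamma\phi_{t^n_k,t^n_{k+1}})\otimes \phi_{t^n_k\wedge t,t^n_{k+1}\wedge t}.
\end{align*}
The first term converges uniformly by assumption. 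Each of the remaining three terms is a (possibly $\gamma$-shifted) Riemann sum of a Young pairing. The exponent condition $1/p+1/q>1$ holds for every pair among $(X,\phi)$ and $(\phi,\phi)$, since $2/q>1$.

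\textbf{Convergence, step (i).} Write each shifted sum as the left-point sum plus $\gamma\sum_k A_{t^n_k,t^n_{k+1}}\otimes B_{t^n_k\wedge t,t^n_{k+1}\wedge t}$, with $A,B\in\{X,\phi\}$.

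The left-point sums converge uniformly in $t$ to the Young integrals $\int_0^t X\otimes\d\phi$, $\int_0^t\phi\otimes\d X$ and $\int_0^t\phi\otimes \d\phi$. This follows from the standard Young estimate $|\int_u^v A\otimes \d B - A_u\otimes B_{u,v}|\lesssim \|A\|_{p,[u,v]}\|B\|_{q,[u,v]}$, whose summed error is bounded by
\[
\sup_k\bigl(\|A\|_{p,[t^n_k,t^n_{k+1}]}^{\theta}\,\|B\|^{\theta'}_{q,[t^n_k,t^n_{k+1}]}\bigr)
\]
times a finite superadditive quantity. Here $\theta,\theta'>0$ are small exponents carved out of the Young room, so that $1/p+1/q$ minus these exponents is still $\geq 1$.

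The diagonal correction terms are bounded by
\[
\sum_k |A_{t^n_k,t^n_{k+1}}|\,|B_{t^n_k,t^n_{k+1}}| \le \max_k |A_{t^n_k,t^n_{k+1}}|^{\delta}\,\cdots,
\]
and by Hölder this is $\lesssim \max_k(\ldots)^{\delta}\, c(0,T)$ for a small $\delta>0$. It tends to $0$ because the increments of $X$ and of $\phi$ along $\pi^n$ vanish uniformly, which is exactly the hypothesis on $\phi$ in the statement.

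\textbf{Main obstacle.} The delicate point is that the mesh of $\pi^n$ need not tend to $0$. Only the oscillations of $X$ and $\phi$ on the partition intervals do. So I will run every estimate with controls evaluated on $[t^n_k,t^n_{k+1}]$, never with the mesh.

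The key tool is the following bound, obtained by splitting off a small power of the control:
\[
\|\phi\|_{q,[t^n_k,t^n_{k+1}]}^{q}\le c_\phi(t^n_k,t^n_{k+1}).
\]
The max of the right-hand side over $k$ need not vanish either. To handle this, I will interpolate between the increment and the $q$-variation, viewing $\phi$ as having finite $q'$-variation for some $q'\in(q,2)$ with $1/p+1/q'>1$. The standard interpolation
\[
\|\phi\|_{q',[u,v]}\le \|\phi\|_{q,[u,v]}^{q/q'}\,\bigl(2\,\mathrm{osc}(\phi,[u,v])\bigr)^{1-q/q'}
\]
then gives smallness. The oscillation on $[t^n_k,t^n_{k+1}]$ tends to $0$ uniformly in $k$. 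This follows by uniform continuity of $\phi$ together with the vanishing increments, via a compactness argument; the same is done for $X$ with some $p'>p$, $p'<3$.

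If needed, I would instead conclude along a subsequence and use uniqueness of the limit.

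\textbf{Bound, step (ii).} For $k<\ell$, the quantity
\[
\Bigl(\int_0^\cdot \hX\otimes\d^{\gamma,\pi^n}\hX\Bigr)_{t^n_k,t^n_\ell}-\hX_{t^n_k}\otimes\hX_{t^n_k,t^n_\ell}
\]
equals the corresponding expression for $X$ plus three cross terms. Each cross term is a discrete Young sum of the form
\[
\sum_{j=k}^{\ell-1}\bigl(A_{t^n_k,t^n_j}+\gamma A_{t^n_j,t^n_{j+1}}\bigr)\otimes B_{t^n_j,t^n_{j+1}}.
\]
I will bound these by the discrete sewing / Young–Loève maximal inequality (removing points successively), which gives
\[
\lesssim \|A\|_{p,[t^n_k,t^n_\ell]}\,\|B\|_{q,[t^n_k,t^n_\ell]} \le \tilde c(t^n_k,t^n_\ell)^{1/p+1/q}.
\]
Since $1/p+1/q>1>2/p$ and controls are bounded, this is $\lesssim \tilde c(t^n_k,t^n_\ell)^{2/p}$. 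Combined with the assumption on $X$, the enlarged control $c+\tilde c$ with $\tilde c:=c+\|\phi\|_{q,[\cdot,\cdot]}^q$ works. Finally, $|\hX_{s,t}|^p\lesssim c(s,t)+\|\phi\|^p_{q,[s,t]}$ holds because $p>q$, which gives the first part of (ii).
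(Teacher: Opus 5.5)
\textbf{Gap in step (i).} The problem is exactly the point you flagged. You claim that $\max_k \mathrm{osc}(\phi,[t^n_k,t^n_{k+1}])\to 0$ (and likewise for $X$) follows from uniform continuity plus vanishing increments "via a compactness argument". This is false. Compactness only produces a limiting interval $[a,b]$ with $\phi_a=\phi_b$ and $\mathrm{osc}(\phi,[a,b])>0$, which is no contradiction.

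Without vanishing oscillation, the left-point cross sums need not converge to the Young integrals, and need not converge at all. So neither sharper estimates nor passing to a subsequence can close the step, since Property $\gamma$-\textup{(RIE)}(i) requires convergence of the whole sequence. For example, take $d=1$, $T=1$, $X\equiv 0$ (which satisfies Property $\gamma$-\textup{(RIE)} relative to any $\gamma$, $p$, $\pi$), $\phi_t=\sin(2\pi t)$, $\pi^{2m}=\{0,1\}$ and $\pi^{2m+1}=\{j2^{-m}:0\le j\le 2^m\}$. Then $\sup_k|\phi_{t^n_k,t^n_{k+1}}|\to 0$. However, $\int_0^t\phi_r\,\d^{\gamma,\pi^n}\phi_r=(\phi_0+\gamma\phi_{0,1})\phi_{0,t}=0$ for even $n$, while along odd $n$ these sums converge to $\frac{1}{2}\sin^2(2\pi t)$. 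So with only vanishing increments the conclusion genuinely fails.

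\textbf{What is missing.} You need the hypothesis $|\pi^n|\to 0$, or at least that the oscillations of $X$ and $\phi$ over the partition intervals vanish. The paper's proof uses this tacitly when it asserts that the piecewise-linear interpolations $\bar X^n$, $\bar\phi^n$ converge to $X$, $\phi$. In the paper's application, $\phi$ contains the time coordinate, so the increment condition on $\phi$ is exactly $|\pi^n|\to 0$.

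Once this is added, oscillations vanish by uniform continuity and the rest of your argument goes through:
the left-point cross sums converge to the Young integrals;
the $\gamma$-diagonal corrections vanish by H\"older, using only vanishing increments;
the discrete Young--Lo\`eve bound yields (ii) with your enlarged control.

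Your route then treats all $\gamma$ uniformly. The paper instead handles $\gamma\neq\frac{1}{2}$ by citing the equivalence with Property \textup{(RIE)} and the known stability result for it. It argues only $\gamma=\frac{1}{2}$ directly, via piecewise-linear interpolation and Young estimates.
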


\subsection{The \texorpdfstring{$\boldsymbol{\gamma}$}{γ}-signature}

We now show that the canonical rough path under Property $\gamma$-\textup{(RIE)} can be corrected to a weakly geometric rough path by adding the pathwise quadratic variation term, which seems natural when comparing stochastic It{\^o} and Stratonovich integration.

\begin{lemma}\label{lemma: weakly geometric rough path under gamma RIE}
  Suppose that $X \in C([0,T];\R^d)$ satisfies Property $\gamma$-\textup{(RIE)} relative to some $\gamma \in [0,1]$, $p \in (2,3)$ and a sequence of partitions $\pi = (\pi^n)_{n \in \N}$. Let $(X,\X^{\gamma,\pi,o,(2)}) \in \cC^p([0,T];\R^d)$ be a continuous rough path, with $\X^{\gamma,\pi,o,(2)} \colon \Delta_T \to \R^d$ given by
  \begin{equation*}
	\X^{\gamma,\pi,o,(2)}_{s,t} := \X^{\gamma,\pi,(2)}_{s,t} + \frac{1}{2} [X]^{\gamma,\pi}_{s,t}, \qquad (s,t) \in \Delta_T,
  \end{equation*}
  where $\X^{\gamma,\pi,(2)}$ is the canonical rough path lift defined in Proposition~\ref{prop: rough path lift under gamma RIE} and $[X]^{\gamma,\pi}$ is defined in Lemma~\ref{lemma: quadratic variation under gamma RIE}. Then, $\X^{\gamma,\pi,o, 2} \colon [0,T] \to G^2(\R^d)$ is a weakly geometric rough path, where we define
  \begin{equation*}
	\X^{\gamma,\pi,o, 2}_t := (1, X_{0,t}, \X^{\gamma,\pi,o,(2)}_{0,t}), \qquad t \in [0,T].
  \end{equation*}
\end{lemma}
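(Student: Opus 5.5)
The plan is to reduce the statement to Lemma~\ref{lemma: weakly geometric rough path}. That lemma says a continuous rough path whose second level has symmetric part $\frac12 X_{0,t}\otimes X_{0,t}$ gives a weakly geometric rough path. So two things are needed: $(X,\X^{\gamma,\pi,o,(2)})\in\cC^p([0,T];\R^d)$, and the symmetric-part identity.

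\emph{Step 1 (rough path property).} By Proposition~\ref{prop: rough path lift under gamma RIE}, $(X,\X^{\gamma,\pi,(2)})\in\cC^p$. By Lemma~\ref{lemma: quadratic variation under gamma RIE}, $[X]^{\gamma,\pi}$ is continuous with finite $1$-variation, hence finite $\p$-variation since $\p\ge1$. Its increment $[X]^{\gamma,\pi}_{s,t}=[X]^{\gamma,\pi}_t-[X]^{\gamma,\pi}_s$ is additive, i.e.\ $[X]^{\gamma,\pi}_{s,t}=[X]^{\gamma,\pi}_{s,u}+[X]^{\gamma,\pi}_{u,t}$. Adding it to $\X^{\gamma,\pi,(2)}$ therefore preserves Chen's relation, and continuity and $\p$-variation are preserved by the triangle inequality.

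\emph{Step 2 (symmetric part).} For $s=0$, formula \eqref{eq: rough path lift under gamma RIE} gives
\begin{equation*}
  \X^{\gamma,\pi,(2)}_{0,t}+(\X^{\gamma,\pi,(2)}_{0,t})^\top=\int_0^t X_r\otimes \d^{\gamma,\pi}X_r+\Big(\int_0^t X_r\otimes \d^{\gamma,\pi}X_r\Big)^\top - X_0\otimes X_{0,t}-X_{0,t}\otimes X_0 .
\end{equation*}
For $\gamma\neq\frac12$, the definition of $[X]^{\gamma,\pi}$ in Lemma~\ref{lemma: quadratic variation under gamma RIE} says that the $(i,j)$ entry of the two integral terms equals $X^i_tX^j_t-X^i_0X^j_0-([X]^{\gamma,\pi}_t)^{ij}$. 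A short algebraic step, $X_t\otimes X_t-X_0\otimes X_0-X_0\otimes X_{0,t}-X_{0,t}\otimes X_0=X_{0,t}\otimes X_{0,t}$, then yields $2\,\mathbb S(\X^{\gamma,\pi,(2)}_{0,t})=X_{0,t}\otimes X_{0,t}-[X]^{\gamma,\pi}_{0,t}$. Since $[X]^{\gamma,\pi}_{0,t}=[X]^{\gamma,\pi}_t-[X]^{\gamma,\pi}_0$ is symmetric, adding $\frac12[X]^{\gamma,\pi}_{0,t}$ gives $\mathbb S(\X^{\gamma,\pi,o,(2)}_{0,t})=\frac12X_{0,t}\otimes X_{0,t}$. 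Equivalently, this says the rough path bracket $[\bX^{\gamma,\pi}]$ equals $[X]^{\gamma,\pi}$.

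\emph{Step 3 (the case $\gamma=\frac12$, the main obstacle).} Here $[X]^{\gamma,\pi}:=0$ by convention, so one must show directly that the midpoint Riemann sums produce a symmetric part $\frac12 X_{0,t}\otimes X_{0,t}$. I would argue at the level of the Riemann sums. Using $(a+\frac12 b)\otimes b+b\otimes(a+\frac12 b)=(a+b)\otimes(a+b)-a\otimes a$, the symmetrized sums telescope exactly to $X_t\otimes X_t-X_0\otimes X_0$ at each $n$ (for $t$ a partition point; a boundary term at non-partition $t$ vanishes as $n\to\infty$ by the small-increment assumption on $\pi$). Passing to the uniform limit gives the claim, and the same computation shows $[X]^{\gamma,\pi}$ genuinely vanishes, consistent with the $(1-2\gamma)$ formula. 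The telescoping identity is the same one behind Lemma~\ref{lemma: quadratic variation under gamma RIE}, so for $\gamma\neq\frac12$ one can also simply cite that lemma. With Steps 1--3 in hand, Lemma~\ref{lemma: weakly geometric rough path} gives $\X^{\gamma,\pi,o,2}\in C^{p\textup{-var}}_o([0,T];G^2(\R^d))$.
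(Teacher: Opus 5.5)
Your proposal is correct and has the same skeleton as the paper's proof. You check that $(X,\X^{\gamma,\pi,o,(2)})$ is a rough path and that $\mathbb{S}(\X^{\gamma,\pi,o,(2)}_{0,t})=\frac12 X_{0,t}\otimes X_{0,t}$, and then invoke Lemma~\ref{lemma: weakly geometric rough path}. The difference is in how you verify the symmetric-part identity.

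The paper does one computation for all $\gamma$ at the level of Riemann sums. It inserts the Riemann-sum representation of $[X]^{\gamma,\pi}$ from Lemma~\ref{lemma: quadratic variation under gamma RIE}, which turns the $\gamma$-sums plus half the bracket into midpoint sums, and then telescopes. For $\gamma\neq\frac12$ you instead read the identity directly off the defining formula
\[
[X^i,X^j]^{\gamma,\pi}_t=X^i_tX^j_t-X^i_0X^j_0-\int_0^t X^i_r\,\d^{\gamma,\pi}X^j_r-\int_0^t X^j_r\,\d^{\gamma,\pi}X^i_r .
\]
This is an exact algebraic identity with no limits taken, and it does not use the Riemann-sum representation of the bracket, so it is shorter and more robust. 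Only for $\gamma=\frac12$, where $[X]^{\frac12,\pi}:=0$ is a mere convention, do you need telescoping, and there your argument coincides with the paper's. You also check Chen's relation for $\X^{\gamma,\pi,o,(2)}$ explicitly, which the paper takes for granted.

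One correction in Step~3. For $t\in(t^n_k,t^n_{k+1})$, the symmetrized midpoint sum differs from $X_t\otimes X_t-X_0\otimes X_0$ by
\[
\tfrac12\big(X_{t^n_k,t^n_{k+1}}\otimes X_{t^n_k,t}+X_{t^n_k,t}\otimes X_{t^n_k,t^n_{k+1}}\big)-X_{t^n_k,t}\otimes X_{t^n_k,t}.
\]
The last term is \emph{not} controlled by $\sup_k|X_{t^n_k,t^n_{k+1}}|$. You need the oscillation of $X$ on the partition intervals to vanish, for example from $|\pi^n|\to0$ together with uniform continuity of $X$. Alternatively, pass to the limit along partition points converging to $t$ and use continuity of the limit; this also requires the partition points to become dense.

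This is not pedantry. Suppose $X$ vanishes outside an interval $(a,b)$, makes a smooth excursion inside it, $a,b\in\pi^n$ for all $n$, and no partition point ever lies in $(a,b)$. Then all Riemann sums vanish identically and Property $\frac12$-\textup{(RIE)} holds. Yet $\X^{\frac12,\pi,(2)}_{0,t}=0\neq\frac12X_{0,t}\otimes X_{0,t}$ wherever $X_t\neq0$ in $(a,b)$. The paper's computation silently drops the $\wedge t$, so it rests on the same tacit vanishing-mesh assumption. That assumption is imposed explicitly in Theorem~\ref{thm: rough int against controlled path under RIE} and in Section~\ref{sec: application to continuous semimartingales}.
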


\begin{proof}
  Since $\X^{\gamma,\pi,(2)}$ has finite $\p$-variation and $[X^i,X^j]^{\gamma,\pi}$ has finite $1$-variation, $\X^{\gamma,\pi,o,(2)}$ has finite $\p$-variation, see Proposition~\ref{prop: rough path lift under gamma RIE} and Lemma~\ref{lemma: quadratic variation under gamma RIE}, and particularly, $\|\mathbf{1};\X^{\gamma,\pi,o, 2}\|_p < \infty$.
	
  We show that $\mathbb{S}(\X^{\gamma,\pi,o,(2)}_{0,t}) = \frac{1}{2} X_{0,t} \otimes X_{0,t}$, for any $t \in [0,T]$, where $\mathbb{S}$ denotes the symmetric part of the matrix. Then applying Lemma~\ref{lemma: weakly geometric rough path}, the proof is complete.
	
  By definition, it holds that, for any $1 \leq i, j \leq d$ and any $t \in [0,T]$,
  \begin{align*}
	&(\X^{\gamma,\pi,o,(2)}_{0,t})^{ij} + (\X^{\gamma,\pi,o,(2)}_{0,t})^{ji} \\
	&\quad = \int_0^t X_r^i \dd^{\gamma,\pi} X_r^j - X_0^i X_{0,t}^j + \frac{1}{2} [X^i,X^j]^{\gamma,\pi}_t + \int_0^t X_r^j \dd^{\gamma,\pi} X_r^i- X_0^j X_{0,t}^i + \frac{1}{2} [X^j,X^i]^{\gamma,\pi}_t \\
	&\quad = \lim_{n \to \infty} \sum_{k=0}^{N_n-1} (X^i_{t^n_k} + \gamma X^i_{t^n_k, t^n_{k+1}}) X^j_{t^n_k \wedge t, t^n_{k+1} \wedge t} + (\frac{1}{2} - \gamma) X^i_{t^n_k, t^n_{k+1}} X^j_{t^n_k \wedge t, t^n_{k+1} \wedge t} - X_0^i X_{0,t}^j \\
	&\qquad + \lim_{n \to \infty} \sum_{k=0}^{N_n-1} (X^j_{t^n_k} + \gamma X^j_{t^n_k, t^n_{k+1}}) X^i_{t^n_k \wedge t, t^n_{k+1} \wedge t} + (\frac{1}{2} - \gamma) X^j_{t^n_k, t^n_{k+1}} X^i_{t^n_k \wedge t, t^n_{k+1} \wedge t} - X_0^j X_{0,t}^i \\
	&\quad = \lim_{n \to \infty} \sum_{k=0}^{N_n-1} \frac{1}{2} (X^i_{t^n_k} + X^i_{t^n_{k+1}}) X^j_{t^n_k,t^n_{k+1}} - X_0^i X_{0,t}^j \\
	&\qquad + \lim_{n \to \infty} \sum_{k=0}^{N_n-1} \frac{1}{2} (X^j_{t^n_k} + X^j_{t^n_{k+1}}) X^i_{t^n_k,t^n_{k+1}} - X_0^j X_{0,t}^i \\
	&\quad = X_t^i X_t^j - X_0^i X_0^j - X_0^i X_{0,t}^j - X_0^j X_{0,t}^i \\
	&\quad = X_{0,t}^i X_{0,t}^j.
  \end{align*}
\end{proof}

\begin{remark}\label{rem: Stratonovich rough path is weakly geometric}
  Since $[X]^{\frac{1}{2},\pi} = 0$, see Lemma~\ref{lemma: quadratic variation under gamma RIE}, we notice that $\X^{\frac{1}{2},\pi,o,(2)} = \X^{\frac{1}{2},\pi,(2)}$, which implies that $(1,X_{0,\cdot},\X^{\frac{1}{2},\pi,(2)}_{0,\cdot}) \in C^{p\textup{-var}}_o([0,T];G^2(\R^d))$. That is, the Stratonovich-type rough path is indeed a weakly geometric rough path, which is very reasonable.

  Further, for any $\gamma \in [0,1]$, it holds that $\X^{\gamma,\pi,o,(2)} = \X^{\frac{1}{2},\pi,(2)}$.
\end{remark}

\begin{remark}\label{rem: definition of the signature}
  If $X$ satisfies Property $\gamma$-\textup{(RIE)}, one can consider the signature $\X^{\gamma,\pi,o}$ of $X$ defined in Definition~\ref{def: Lyons' lift of weakly geometric rough path} as the unique path extension of $\X^{\gamma,\pi,o, 2}$. This then coincides with $\X^{\frac{1}{2},\pi,o}$, the unique path extension of $(1,X_{0,\cdot},\X^{\frac{1}{2},\pi,(2)})$.
\end{remark}

Now, we define the signature as the collection of all iterated integrals over a fixed interval associated to a sufficiently regular path. Here, we utilize Property $\gamma$-\textup{(RIE)} and the corresponding iterated integral, which allows for a unifying framework for It{\^o}-type and Stratonovich-type signatures.

To that end, we assume that the path $X$ satisfies Property $\gamma$-\textup{(RIE)} relative to $\gamma$, $p$ and $\pi$, and define the signature $\X^{\gamma,\pi}$ of $X$ using Lyons' extension theorem on $T((\R^d))$, see e.g.~\cite[Theorem~3.7]{Lyons2007}. More precisely, we note that $(1, X_{0,\cdot}, \X^{\gamma,\pi,(2)}) \colon [0,T] \to T^2(\R^d)$ is a multiplicative functional of finite $p$-variation controlled by $c$, i.e., $|X_{s,t}| \lesssim c(s,t)$, $|\X^{\gamma,\pi,(2)}_{s,t}| \lesssim c(s,t)$, $(s,t) \in \Delta_T$, where $\X^{\gamma,\pi,(2)}_t := \int_0^t X_{0,r} \otimes \d^{\gamma,\pi} X_r := \int_0^t X_r \otimes \d^{\gamma,\pi} X_r - X_0 \otimes X_{0,t}$, and $c$ is the control function for which \eqref{eq: bounded p/2 variation assumption} holds, see Proposition~\ref{prop: rough path lift under gamma RIE}. Applying Lyons' extension theorem, for every $N \geq 3$, there exists a unique continuous path $\X^{\gamma,\pi,(N)} \colon [0,T] \to (\R^d)^{\otimes N}$ such that
\begin{equation*}
  (1, X_{0,\cdot}, \X^{\gamma,\pi,(2)}, \dots, \X^{\gamma,\pi,(N)}, \dots) \in T((\R^d))
\end{equation*}
is a multiplicative functional with finite $p$-variation, that is, in particular, $|\X_{s,t}^{\gamma,(N)}|\lesssim c(s,t)^{\frac{N}{p}}$, $(s,t) \in \Delta_T$, for any $N \geq 1$.

Proposition~\ref{prop: Lyons extension coincides with iterated integrals} now states that Lyons' extension $\X^{\gamma,\pi} \colon [0,T] \to T((\R^d))$ coincides with the collection of iterated rough integrals of controlled paths with respect to $\bX^{\gamma,\pi}=(X,\X^{\gamma,\pi,(2)})$, that is, for $N \geq 3$,
\begin{equation*}
  \X_{s,t}^{\gamma,\pi,(N)}=\int_s^t\X_{s,r}^{\gamma,\pi,(N-1)} \otimes \d \bX_r^{\gamma,\pi}, \qquad (s,t) \in \Delta_T,
\end{equation*}
where the rough integral is defined in Lemma~\ref{lemma: integration against controlled path}.

We notice that due to Theorem~\ref{thm: rough int against controlled path under RIE}, the integral exists as a limit of Riemann sums along $\pi$. Moreover, for any multi-index $I = (i_1,\dots,i_N)$ of length $N$, because $\langle e_I,\cdot\rangle \colon (\R^d)^{\otimes N} \to\R$ is continuous, we observe that
\begin{align*}
  &\langle e_I,\X^{\gamma,\pi}_{s,t}\rangle = \langle e_I, \X^{\gamma,\pi,(N)}_{s,t} \rangle \\
  &\quad = \langle e_I, \lim_{n\to\infty}\sum_{k=0} ^{N_n-1}
  \bigl(\X^{\gamma,\pi,(N-1)}_{s,t^n_k} + \gamma (\X^{\gamma,\pi,(N-1)}_{s,t^n_{k+1}}-\X^{\gamma,\pi,(N-1)}_{s,t^n_{k}})\bigr)\otimes X_{t^n_k \vee s,t^n_{k+1} \wedge t}\rangle\\
  &\quad= \lim_{n\to\infty}\sum_{k=0}^{N_n-1}
  \langle e_I,(\X^{\gamma,\pi,(N-1)}_{s,t^n_k} + \gamma (\X^{\gamma,\pi,(N-1)}_{s,t^n_{k+1}}-\X^{\gamma,\pi,(N-1)}_{s,t^n_{k}}))\otimes X_{t^n_k \vee s,t^n_{k+1} \wedge t} \rangle,
\end{align*}
for $(s,t) \in \Delta_T$. Hence,
\begin{align*}
  \langle e_I,\X^{\gamma,\pi}_{s,t}\big\rangle
  &= \lim_{n\to\infty}\sum_{k=0}^{N_n-1}
  \langle e_{I'},\X^{\gamma,\pi,(N-1)}_{s,t^n_k} + \gamma (\X^{\gamma,\pi,(N-1)}_{s,t^n_{k+1}}-\X^{\gamma,\pi,(N-1)}_{s,t^n_{k}})\rangle
  \, X^{i_N}_{t^n_k \vee s,t^n_{k+1} \wedge t},
\end{align*}
which yields that
\begin{equation*}
  \langle e_I,\X^{\gamma,\pi}_{s,t}\big\rangle
  = \int_s^t \langle e_{I'},\X^{\gamma,\pi}_{s,r}\big\rangle\,\dd^{\gamma,\pi}X^{i_N}_r,
\end{equation*}
where the integral on the right-hand side is well-defined as a rough integral with respect to a controlled path and exists as a limit of Riemann sums along $\pi$, see Lemma~\ref{lemma: integration against controlled path}, Remark~\ref{rem: tensored controlled path} and Theorem~\ref{thm: rough int against controlled path under RIE}, because $r\mapsto\langle e_{I^\prime},\X^{\gamma,\pi}_{s,r}\rangle$ is a controlled path w.r.t.~$X$ on $[s,t]$ and so is $X^{i_N}$.

\medskip

This allows us to define the \emph{$\gamma$-signature} as follows.

\begin{definition}\label{def: gamma-signature}
  Suppose that $X \in C([0,T];\R^d)$ satisfies Property $\gamma$-\textup{(RIE)} relative to some $\gamma \in [0,1]$, $p \in (2,3)$ and a sequence of partitions $\pi = (\pi^n)_{n \in \N}$. 
  
  We recursively set
  \begin{align*}
    &\langle e_\emptyset, \X^{\gamma,\pi}_t \rangle := 1, \quad \langle e_I, \X^{\gamma,\pi}_t \rangle := X_{0,t}^{i_1}, \quad I = (i_1),\\
	&\langle e_I, \X^{\gamma,\pi}_t \rangle := \int_0^t X^{i_1}_r \dd^{\gamma,\pi} X^{i_2}_r - X_0^{i_1} X_{0,t}^{i_2} = (\X^{\gamma,\pi,(2)}_{0,t})^{i_1i_2}, \quad I = (i_1,i_2), \\
	&\langle e_I, \X^{\gamma,\pi}_t \rangle :=\int_0^t\langle e_{I^\prime},\X_r^{\gamma,\pi} \rangle \dd^{\gamma,\pi} X_r^{i_{|I|}} \\
    &\qquad \qquad = \lim_{n \to \infty} \sum_{k=0}^{N_n-1} (\langle e_{I'}, \X^{\gamma,\pi}_{t^n_k} \rangle + \gamma (\langle e_{I'}, \X^{\gamma,\pi}_{t^n_{k+1}} \rangle - \langle e_{I'}, \X^{\gamma,\pi}_{t^n_k} \rangle)) X^{i_{|I|}}_{t^n_k \wedge t, t^n_{k+1} \wedge t},
  \end{align*}
  where the integral exists as a rough integral of controlled paths, for $I = (i_1, \ldots, i_{|I|})$, $|I| > 2$, and $t \in [0,T]$. Then $\X^{\gamma,\pi} \colon [0,T] \to T((\R^d))$ is well-defined and is  called the \emph{$\gamma$-signature} of $X$. Its projection $\X^{\gamma,\pi, N}$ on $T^N(\R^d)$ is given by
  \begin{equation*}
	  \X^{\gamma,\pi, N}_t = \Pi_{N}(\X^{\gamma,\pi}_t) = \sum_{|I|\leq N} \langle e_I, \X^{\gamma,\pi}_t \rangle e_I,
  \end{equation*}
  and called \emph{$\gamma$-signature of $X$ truncated at level $N$}, which takes values in $T^N(\R^d)$ for all $t \in [0,T]$. The increments of the $\gamma$-signature $\X^{\gamma,\pi}$ are defined by
  \begin{equation*}
    \X^{\gamma,\pi}_{s,t} := (\X^{\gamma,\pi}_s)^{-1} \otimes \X^{\gamma,\pi}_t, \qquad (s,t) \in \Delta_T.
  \end{equation*}
\end{definition}

\begin{remark}
  Remark~\ref{rem: Stratonovich rough path is weakly geometric} states that
  \begin{equation*}
	\Pi_{2}(\X^{\gamma,\pi,o}_t) = \X^{\gamma,\pi,o, 2}_t = (1, X_{0,t}, \X^{\gamma,\pi,o,(2)}_{0,t}) = (1, X_{0,t}, \X^{\frac{1}{2},\pi,(2)}_{0,t}) = \Pi_{ 2}(\X^{\frac{1}{2},\pi}_t),
  \end{equation*}
  that is, if $X$ satisfies Property $\gamma$-\textup{(RIE)}, the signature of the corresponding weakly geometric rough path truncated at level $2$ and the $1/2$-signature of $X$ truncated at level $2$ coincide. Moreover, since the $\gamma$-signature is defined to be the unique Lyons' extension, the $1/2$-signature $\X^{\frac{1}{2},\pi}$ is a group-like valued path, i.e., $\X^{\frac{1}{2},\pi}_t \in G((\R^d))$, and coincides with the signature of the weakly geometric rough path $\X^{\gamma,\pi,o,2} = \X^{\frac{1}{2},\pi,o,2} = (1,X_{0,\cdot},\X^{\frac{1}{2},\pi})$, see Definition~\ref{def: Lyons' lift of weakly geometric rough path}.
\end{remark}

Now, suppose that $X \in C([0,T];\R^d)$ satisfies Property $\gamma$-\textup{(RIE)} relative to some $\gamma \in [0,1]$, $p \in (2,3)$ and a sequence of partitions $\pi = (\pi^n)_{n \in \N}$. We set
\begin{equation}\label{eq: extended path}
  \hX := (\cdot,X,\QX) \in C([0,T];\R^\hd),
\end{equation}
where $\hd = 1 + d + \frac{d(d+1)}{2}$, and
\begin{equation*}
  \QX := ([X^1,X^1]^{\gamma,\pi}, \dots, [X^1,X^d]^{\gamma,\pi}, [X^2,X^2]^{\gamma,\pi}, \dots, [X^2,X^d]^{\gamma,\pi}, \dots, [X^d,X^d]^{\gamma,\pi}).
\end{equation*}
We first notice that $\hX$ is the path extended by its rough path bracket and time, as introduced in Section~\ref{subsec: signature of non-weakly geometric rough paths}, because $[\bX^{\gamma,\pi}] = [X]^{\gamma,\pi}$. We also recall that $[X]^{\frac{1}{2},\pi} = 0$, see Lemma~\ref{lemma: quadratic variation under gamma RIE}, that is, if $\gamma = \frac{1}{2}$, we may consider $\hX = (\cdot,X)$, and we have that $\hd = 1 + d$.

It follows by applying Lemma~\ref{lemma: quadratic variation under gamma RIE}, and Lemma~\ref{lemma: stability of gamma RIE} to $(\cdot,0,0) + (0,X,0) + (0,0,\QX)$ that $\hX$ satisfies Property $\gamma$-\textup{(RIE)} relative to $\gamma$, $p$ and $\pi$.

As before we use $e_0$ for the time component and $\epsilon_{ij}$ for the component of $\hX$ referring to $[X^i,X^j]^{\gamma,\pi}$, i.e.~$\langle \epsilon_{ij},\hbbX_t^{\gamma,\pi}\rangle:=[X^i,X^j]_t^{\gamma,\pi}$, $i,j=1,\ldots,d$, $i\le j$,  $t\in[0,T]$.

\subsection{A universal approximation theorem with \texorpdfstring{$\boldsymbol{\gamma}$}{γ}-signatures}\label{sec: pathwise universal approximation theorem}

As an example of Theorem~\ref{thm: UAT non-geometric}, the universal approximation property holds true for the $\gamma$-signature of paths satisfying Property $\gamma$-\textup{(RIE)} for $\gamma \in [0,1]$, $\gamma \neq \frac{1}{2}$.

\begin{corollary}\label{cor: pathwise UAT under gamma RIE}
  Let $\gamma \in [0,1]$, $\gamma \neq \frac{1}{2}$, $p \in (2,3)$ and $\pi = (\pi^n)_{n \in \N}$ be a sequence of partitions of the interval $[0,T]$. Let $K \subset \widehat{\cC}^p([0,T];\R^\hd) $ be a compact subset, bounded with respect to the rough path norm, and consider a continuous function $f \colon K \to \R$. Further, for some $L > 0$, let $K_{\gamma,\pi,L} \subset K$ be the subset defined by
  \begin{align*}
    K_{\gamma,\pi,L} := \Biggl\{\hbX^{\gamma,\pi}=(\hX,\hbbX^{\gamma,\pi,(2)}) \in K \, : \, 
    \begin{aligned}
    &X \text{ satisfies Property $\gamma$-\textup{(RIE)} relative to $\gamma$, $p$} \\
    &\text{and } \pi \text{ such that }\|(\hX,\hbbX^{\gamma,\pi,(2)})\|_p + \|[\hX]^{\gamma,\pi}\|_{\p} \leq L
    \end{aligned}
     \Biggr\}.
  \end{align*}
  Then for every $\epsilon > 0$, there exists a linear functional $\ell^\gamma \in T(\R^\hd)$ such that
  \begin{equation*}
	\sup_{\hbX^{\gamma,\pi} \in K_{\gamma,\pi,L}} |f(\hbX^{\gamma,\pi}) - \langle \ell^\gamma, \hbbX^{\gamma,\pi}_T \rangle| < \epsilon,
  \end{equation*}
  where $\hbbX^{\gamma,\pi}$ denotes the $\gamma$-signature of $\hX$.
\end{corollary}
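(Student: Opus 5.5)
The plan is to reduce the corollary to Theorem~\ref{thm: UAT non-geometric} by checking two things. First, the set $K_{\gamma,\pi,L}$ is a subset of the set $K_L$ appearing there. Second, for every element of $K_{\gamma,\pi,L}$, the $\gamma$-signature $\hbbX^{\gamma,\pi}$ coincides with Lyons' extension of the rough path $\hbX^{\gamma,\pi}$. Once these hold, we apply Theorem~\ref{thm: UAT non-geometric} to $K$, $f$ and $L$, obtain $\ell\in T(\R^\hd)$ with $\sup_{K_L}|f-\langle \ell,\hbbX_T\rangle|<\epsilon$, and set $\ell^\gamma:=\ell$. Since $K_{\gamma,\pi,L}\subseteq K_L$ and the two signatures agree, the supremum over $K_{\gamma,\pi,L}$ is bounded by the supremum over $K_L$, which gives the claim.

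For the first point, let $\hbX^{\gamma,\pi}\in K_{\gamma,\pi,L}$. By Lemma~\ref{lemma: quadratic variation under gamma RIE} and Lemma~\ref{lemma: stability of gamma RIE}, applied to the decomposition $(\cdot,0,0)+(0,X,0)+(0,0,\QX)$ as noted after \eqref{eq: extended path}, the extended path $\hX$ satisfies Property $\gamma$-\textup{(RIE)}. Proposition~\ref{prop: rough path lift under gamma RIE} then gives the canonical lift $\hbbX^{\gamma,\pi,(2)}$. I will check that this lift agrees with the lift used in Section~\ref{subsec: signature of non-weakly geometric rough paths}. On the $X$-block this holds because it is $\X^{\gamma,\pi,(2)}$. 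On the blocks involving time or $\QX$, both constructions give Young integrals, since $\QX$ has finite $1$-variation and $\hX$ has finite $p$-variation with $1/p+1>1$. I will also use that $\gamma$-Riemann sums converge to the Young integral, because the $\gamma$-correction is of order $\sum |\hX_{t_k,t_{k+1}}||\QX_{t_k,t_{k+1}}|\to 0$.

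Next, I will identify the rough path bracket $[\hbX^{\gamma,\pi}]$. Its $X$-block is $[X]^{\gamma,\pi}$, by Lemma~\ref{lemma: quadratic variation under gamma RIE} together with the identification $[\bX^{\gamma,\pi}]=[X]^{\gamma,\pi}$ stated there. Its other blocks vanish, since brackets involving a finite $1$-variation component are zero. Hence $\|[\hbX^{\gamma,\pi}]\|_{\p}=\|[X]^{\gamma,\pi}\|_{\p}$, and the defining constraint of $K_{\gamma,\pi,L}$ is exactly the constraint of $K_L$. Membership of $\hbX^{\gamma,\pi}$ in $\widehat{\cC}^p$ is given by assumption, so $K_{\gamma,\pi,L}\subseteq K_L$.

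For the second point, I will use that the $\gamma$-signature of $\hX$ (Definition~\ref{def: gamma-signature}) was shown above to coincide with Lyons' extension of $(1,\hX_{0,\cdot},\hbbX^{\gamma,\pi,(2)})$. The ingredients are Proposition~\ref{prop: Lyons extension coincides with iterated integrals} and Theorem~\ref{thm: rough int against controlled path under RIE}, applied now in dimension $\hd$. Since Lyons' extension is unique, this is exactly the signature $\hbbX$ appearing in Theorem~\ref{thm: UAT non-geometric}.

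The main obstacle I expect is the bookkeeping in the first point: verifying that the canonical $\gamma$-(RIE) lift of the extended path agrees componentwise with the paper's lift $\hbbX^{(2)}$ of Section~\ref{subsec: signature of non-weakly geometric rough paths}, and that the bracket in the constraint is the rough path bracket of $\hbX$. The cross terms between $X$ and $\QX$ need the argument that the $\gamma$-shifted Riemann sums still converge to the Young integrals. I would handle this with the standard estimate that the additional term is bounded by $\max_k|\hX_{t_k,t_{k+1}}|\cdot\|\QX\|_{1}\to0$.
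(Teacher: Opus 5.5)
Your proposal is correct and follows the paper's route. The paper also reduces the corollary to Theorem~\ref{thm: UAT non-geometric} by checking $K_{\gamma,\pi,L}\subseteq K_L$: it uses $[\bX^{\gamma,\pi}]=[X]^{\gamma,\pi}$ (so $Q(X)=\QX$) and Lemma~\ref{lemma: stability of gamma RIE} to get $[\hbX^{\gamma,\pi}]=[\hX]^{\gamma,\pi}$. Your explicit identification of the $\gamma$-signature with Lyons' extension is established in Section~\ref{sec: signature via pathwise integration} and Appendix~\ref{sec: finite p-variation}, and the paper uses it there implicitly.

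One step you leave tacit: the constraint in $K_{\gamma,\pi,L}$ involves $[\hX]^{\gamma,\pi}$, not $[X]^{\gamma,\pi}$. The same definitional computation, applied to the $\gamma$-\textup{(RIE)} path $\hX$, gives $[\hbX^{\gamma,\pi}]=[\hX]^{\gamma,\pi}$, so the two constraints do coincide.
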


\begin{proof}
  We only need to check that $K_{\gamma,\pi,L} \subset K_L$, where $K_L$ denotes the set defined in Theorem~\ref{thm: UAT non-geometric}. Then the claim follows immediately from Theorem~\ref{thm: UAT non-geometric}. 
    
  Let $\hbX^{\gamma,\pi}=(\hX,\hbbX^{\gamma,\pi,(2)}) \in K_{\gamma,\pi,L}$. Since $X$ satisfies Property $\gamma$-\textup{(RIE)} relative to $\gamma$, $p$ and $\pi$, it holds that $\bX^{\gamma,\pi} = (X,\X^{\gamma,\pi,(2)}) \in \cC^p([0,T];\R^d)$, see Proposition~\ref{prop: rough path lift under gamma RIE}. Further, by \cite[Proposition~2.18]{Allan2023b}, we have that the rough path bracket $[\bX^{\gamma,\pi}]$ coincides with $(1-2\gamma)[X]$, where $[X]$ denotes the quadratic variation of $X$ along $\pi$ in the sense of F\"ollmer, equal to $[X]^{\gamma,\pi}$, which yields that $[\bX^{\gamma,\pi}]=[X]^{\gamma,\pi}$, i.e. $Q(X) = Q^{\gamma,\pi}(X)$. Also, $\hX$ satisfies Property $\gamma$-\textup{(RIE)}, see Lemma~\ref{lemma: stability of gamma RIE}, so that $[\hbX^{\gamma,\pi}] = [\hX]^{\gamma,\pi}$, and thus, $\hbbX^{\gamma,\pi} \in K_L$.
\end{proof}

We immediately obtain the following corollary. Assuming that the path satisfies Property $1/2$-\textup{(RIE)}, by Remark~\ref{rem: Stratonovich rough path is weakly geometric}, the Stratonovich-type lift is weakly geometric, and the statement also applies directly to the time-extended Stratonovich-type signature.

\begin{corollary}
  Let $K \subset \widehat{C}_o^{p\textup{-var}}([0,T];G^2(\R^{1+d}))$ be a compact subset, bounded with respect to the $p$-variation norm, and consider a continuous function $f \colon K \to \R$. Further, define
  \begin{align*}
	K_{\frac{1}{2},\pi} := \Biggl\{\hbbX^2 \in K \, : \, 
    \begin{aligned}
    &X \text{ satisfies Property $1/2$-\textup{(RIE)} relative to $p$ and $\pi$}\\
    &\text{such that }\hbbX^{(1)} = \hX_{0,\cdot} \text{ and } \hbbX^{(2)} = \hbbX^{\frac{1}{2},\pi,(2)} \\
    \end{aligned}
    \Biggr\}.
  \end{align*}
  Then for every $\epsilon > 0$, there exists a linear functional $\ell \in T(\R^{1+d})$ such that
  \begin{equation*}
	\sup_{\hbbX^{\frac{1}{2},\pi, 2} \in K_{\frac{1}{2},\pi}} |f(\hbbX^{\frac{1}{2},\pi, 2}) - \langle \ell, \hbbX^{\frac{1}{2},\pi}_{T}\rangle| < \epsilon.
  \end{equation*}
\end{corollary}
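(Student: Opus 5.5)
The plan is to reduce this corollary to Theorem~\ref{thm: UAT geometric}, just as Corollary~\ref{cor: pathwise UAT under gamma RIE} was reduced to Theorem~\ref{thm: UAT non-geometric}. The reduction needs two facts. First, the set $K_{\frac{1}{2},\pi}$ is a subset of the compact set $K\subset \widehat{C}_o^{p\textup{-var}}([0,T];G^2(\R^{1+d}))$. Second, for elements of $K_{\frac{1}{2},\pi}$, the $\frac12$-signature $\hbbX^{\frac{1}{2},\pi}$ of the time-extended path $\hX=(\cdot,X)$ coincides with the signature $\hbbX^{o}$ of Theorem~\ref{thm: UAT geometric}, that is, with Lyons' extension of the weakly geometric rough path $\hbbX^{\frac12,\pi,2}$. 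Once both are in place, I apply Theorem~\ref{thm: UAT geometric} to $f$ on $K$ to get $\ell\in T(\R^{1+d})$ with $\sup_{K}|f-\langle \ell,\hbbX^o_T\rangle|<\epsilon$. Restricting the supremum to the subset $K_{\frac12,\pi}$ and substituting $\hbbX^o_T=\hbbX^{\frac12,\pi}_T$ then gives the claim.

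\textbf{Step 1 (the extended path satisfies the property).} I show that $\hX=(\cdot,X)$ satisfies Property $1/2$-\textup{(RIE)} relative to $p$ and $\pi$. The time component $t\mapsto t$ has finite $1$-variation, and its increments along $\pi^n$ vanish as the mesh goes to zero. So Lemma~\ref{lemma: stability of gamma RIE}, applied to $(0,X)+(\cdot,0)$ with $q=1$, gives the property for $\hX$. Proposition~\ref{prop: rough path lift under gamma RIE} then yields the canonical lift $\hbX^{\frac12,\pi}$. Lemma~\ref{lemma: quadratic variation under gamma RIE} gives $[\hX]^{\frac12,\pi}=0$. Lemma~\ref{lemma: weakly geometric rough path under gamma RIE} (equivalently, Remark~\ref{rem: Stratonovich rough path is weakly geometric}) then shows that $(1,\hX_{0,\cdot},\hbbX^{\frac12,\pi,(2)})$ is weakly geometric, with time component $\langle e_0,\cdot\rangle=t$. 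Hence it lies in $\widehat{C}_o^{p\textup{-var}}$, consistent with membership in $K$.

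\textbf{Step 2 (the two signatures coincide).} By Definition~\ref{def: gamma-signature} together with Proposition~\ref{prop: Lyons extension coincides with iterated integrals}, the $\frac12$-signature equals the Lyons extension of its level-$2$ truncation. Lyons' extension is unique, so this is the same path as the signature in Definition~\ref{def: Lyons' lift of weakly geometric rough path}; this is recorded in the remark following Definition~\ref{def: gamma-signature}. Consequently $\hbbX^{\frac12,\pi}_T=\hbbX^o_T$ for every element of $K_{\frac12,\pi}$.

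\textbf{Main obstacle.} The only delicate point is the bookkeeping in Step~2. The $\gamma$-signature is defined through Riemann-sum integrals along $\pi$, whereas Theorem~\ref{thm: UAT geometric} uses the abstract Lyons extension. I therefore need the identification via uniqueness of Lyons' extension, together with Theorem~\ref{thm: rough int against controlled path under RIE}, which ensures the Riemann sums converge to the rough integrals. Steps~1 and~2 need to be checked only at the level of the truncated objects, because $f$ is evaluated on $\hbbX^{\frac12,\pi,2}$, and that is exactly the element of $K$.
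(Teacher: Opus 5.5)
Your proposal is correct and follows essentially the paper's route: the paper also obtains this corollary directly from Theorem~\ref{thm: UAT geometric}. It uses that the $\frac12$-type lift is weakly geometric (Remark~\ref{rem: Stratonovich rough path is weakly geometric}) and that the $\frac12$-signature is Lyons' extension of that lift, so restricting the uniform bound from $K$ to $K_{\frac12,\pi}$ gives the claim. In Step~1, applying Lemma~\ref{lemma: stability of gamma RIE} to the time component tacitly needs $|\pi^n|\to 0$, which Property $\gamma$-\textup{(RIE)} does not itself require; the paper makes the same implicit assumption.
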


\section{Application to continuous semimartingales}\label{sec: application to continuous semimartingales}

In this section, we apply the deterministic theory developed in Section~\ref{sec: general pathwise signatures} and Section~\ref{sec: pathwise universal approximation theorem} to continuous semimartingales.

In fact, continuous semimartingales fit well into the theory of signatures when adopting the notion of stochastic integration. That is, the signature can be defined as the collection of iterated integrals via stochastic integration. Because it is obeying first order calculus, one usually considers Stratonovich integration, which almost surely coincides with Lyons' lift, thus classically implying a universal approximation theorem for continuous path functionals.

\medskip 

Throughout, let $X=(X_t)_{t\in [0,T]}$ be a $d$-dimensional continuous semimartingale, defined on a probability space $(\Omega, \mathcal{F}, \P)$ with a filtration $(\mathcal{F}_t)_{t \in [0,T]}$ satisfying the usual conditions, i.e., completeness and right-continuity.

\begin{definition}
  Let $X$ be a $d$-dimensional continuous semimartingale. Its \emph{Stratonovich signature} is the stochastic process $\X^{\circ} = (\X^{\circ}_t)_{t \in [0,T]}$ with values in $T_1((\R^d))$, whose components are recursively defined by
  \begin{equation*}
	\langle e_\emptyset, \X^{\circ}_t \rangle := 1, \qquad \langle e_I, \X^{\circ}_t \rangle := \int_0^t \langle e_{I'}, \X^{\circ}_r \rangle \circ \d X_r^{i_{|I|}},
  \end{equation*}
  for each $I = (i_1, \dots, i_{|I|})$ and $t \in [0,T]$, where $\circ$ denotes the Stratonovich integral. Its projection $\X^{\circ,\leq N}$ on $T^N(\R^d)$ is given by
  \begin{equation*}
	\X^{\circ, N}_t = \Pi_{N}(\X^{\circ}_t) = \sum_{|I|\leq N} \langle e_I, \X^{\circ}_t \rangle e_I,
  \end{equation*}
  and called \emph{Stratonovich signature of $X$ truncated at level $N$}, which takes values in $G^N(\R^d)$ for all $t \in [0,T]$. The increments of the Stratonovich signature $\X^{\circ}$ are defined by
  \begin{equation*}
	\X^{\circ}_{s,t} := (\X^{\circ}_s)^{-1} \otimes \X^{\circ}_t, \qquad (s,t) \in \Delta_T.
  \end{equation*}
\end{definition}

It turns out that, if the semimartingale $X$ satisfies Property $\gamma$-\textup{(RIE)} relative to $\gamma \in [0,1]$, $p \in (2,3)$ and a suitable sequence of partitions, we obtain a canonical signature which corresponds $\P$-almost surely with the signature defined via Lyons' lift and the Stratonovich signature.

\begin{lemma}
  Let $\gamma \in [0,1]$, let $p \in (2,3)$ and let $\pi^n = \{\tau^n_k\}$, $n \in \N$, be a sequence of adapted partitions (so that each $\tau^n_k$ is a stopping time), such that for almost every $\omega \in \Omega$, $(\pi^n(\omega))_{n \in \N}$ is a sequence of (finite) partitions of $[0,T]$ with vanishing mesh size. 
    
  Let $X$ be a continuous $d$-dimensional semimartingale, and suppose that for almost every $\omega \in \Omega$, $\sup\{|X_{\tau^n_k(\omega),\tau^n_{k+1}(\omega)}(\omega)| \, : k=0, \dots, N_n-1\}$ converges to $0$ as $n \to \infty$, and that the sample path $X(\omega)$ satisfies Property $\gamma$-\textup{(RIE)} relative to $\gamma$, $p$ and $(\pi^n(\omega))_{n \in \N}$.
  \begin{enumerate}
    \item[(i)] The random weakly geometric rough path pathwise defined via Proposition~\ref{prop: rough path lift under gamma RIE} for $\gamma = \frac{1}{2}$ and the random weakly geometric rough path pathwise defined via Lemma~\ref{lemma: weakly geometric rough path under gamma RIE} for $\gamma \in [0,1]$ coincide $\P$-almost surely.
	\item[(ii)] The random weakly geometric rough path pathwise defined via Lemma~\ref{lemma: weakly geometric rough path under gamma RIE} and the Stratonovich signature of $X$ truncated at level $2$ coincide $\P$-almost surely.
	\item[(iii)] The random signature $\X^{o}$ pathwise defined via Definition~\ref{def: Lyons' lift of weakly geometric rough path}, or more precisely, Remark~\ref{rem: definition of the signature}, the random signature $\X^{\frac{1}{2},\pi}$ pathwise defined via Definition~\ref{def: gamma-signature} and the Stratonovich signature $\X^{\circ}$ of $X$ coincide $\P$-almost surely.
  \end{enumerate}
\end{lemma}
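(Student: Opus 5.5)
For (i), I will work pathwise on the full-measure set $\Omega_0$ on which the sample path satisfies Property $\gamma$-\textup{(RIE)} relative to $\gamma$, $p$ and $\pi(\omega)$. The first task is to check that Property $\gamma$-\textup{(RIE)}(i) for one $\gamma$ gives convergence of the $\tfrac12$-Riemann sums. Write
\begin{equation*}
  (X_{t_k}+\tfrac12 X_{t_k,t_{k+1}})\otimes X_{t_k,t_{k+1}} = (X_{t_k}+\gamma X_{t_k,t_{k+1}})\otimes X_{t_k,t_{k+1}} + (\tfrac12-\gamma)X_{t_k,t_{k+1}}\otimes X_{t_k,t_{k+1}}.
\end{equation*}
For the symmetric part, the discrete quadratic variation converges by Lemma~\ref{lemma: quadratic variation under gamma RIE} (the case $\gamma=\frac12$ is trivial). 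This gives
\begin{equation*}
  \X^{\frac12,\pi,(2)}=\X^{\gamma,\pi,(2)}+\tfrac12[X]^{\gamma,\pi}=\X^{\gamma,\pi,o,(2)},
\end{equation*}
which is exactly the identity recorded in Remark~\ref{rem: Stratonovich rough path is weakly geometric}. The bound (ii) for $\gamma=\frac12$ follows the same way, using the uniformly bounded $1$-variation of $[X]^{\gamma,\pi^n}$ (equivalently, one can invoke Lemma~\ref{lemma: RIE eq to gamma RIE}). Part (i) is then deterministic and holds for every $\omega\in\Omega_0$.

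For (ii), I will identify the pathwise limits with stochastic integrals. Take $\gamma=0$ for concreteness, or reduce general $\gamma$ to this case via (i). The left-point Riemann sums along the adapted partitions $\pi^n$ converge in probability, uniformly in $t$, to the It\^o integral $\int_0^t X_r\otimes \d X_r$; this is the standard result for continuous integrands and stopping-time partitions with vanishing mesh. Likewise, $\sum X^i_{\tau_k,\tau_{k+1}}X^j_{\tau_k,\tau_{k+1}}\to\langle X^i,X^j\rangle_t$ in probability. Passing to a subsequence gives almost sure convergence. Since the pathwise limits exist along the full sequence by assumption, they agree almost surely with the stochastic limits. Hence $\X^{\gamma,\pi,o,(2)}_{0,t}$ equals the It\^o iterated integral plus $\frac12\langle X\rangle_t$, which is the Stratonovich integral $\int_0^t X_{0,r}\otimes\circ\d X_r$. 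Continuity of both sides in $t$ lets me upgrade "a.s. for each $t$" to "a.s. for all $t$". I expect this identification of limits (the subsequence argument, and the use of stopping-time partitions) to be the main technical point.

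For (iii), the signature $\X^o$ from Remark~\ref{rem: definition of the signature} and the $\frac12$-signature $\X^{\frac12,\pi}$ are both Lyons' extensions of the same level-$2$ object by (i). Uniqueness in Lyons' extension theorem therefore makes them coincide on $\Omega_0$. It then remains to show that the Stratonovich signature equals this Lyons extension almost surely. I will argue by induction on the level $N$. Suppose the components agree at level $N-1$. By Proposition~\ref{prop: Lyons extension coincides with iterated integrals} and Definition~\ref{def: gamma-signature}, the level-$N$ Lyons component is the limit of $\frac12$-Riemann sums of the controlled path $\langle e_{I'},\X^{\frac12,\pi}\rangle$ against $X^{i_N}$ along $\pi$ (Theorem~\ref{thm: rough int against controlled path under RIE}). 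By the induction hypothesis, this integrand is a.s. the continuous semimartingale $\langle e_{I'},\X^\circ\rangle$. For continuous semimartingales, midpoint-type sums $\sum(Y_{\tau_k}+\frac12 Y_{\tau_k,\tau_{k+1}})X^{i_N}_{\tau_k,\tau_{k+1}}$ converge in probability to $\int Y\circ\d X^{i_N}$: this is the It\^o sum plus half the discrete covariation. Repeating the subsequence argument from (ii) identifies the two limits almost surely. Since there are countably many multi-indices, intersecting the resulting null sets completes the proof.
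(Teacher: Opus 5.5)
Your proposal is correct. Parts (i) and (ii) follow the paper's argument. The midpoint sums are the $\gamma$-sums plus $(\tfrac12-\gamma)$ times the discrete covariation, which gives $\X^{\frac12,\pi,(2)}=\X^{\gamma,\pi,o,(2)}$. The pathwise limits are then identified with the ucp limits of the same sums along an a.s.\ convergent subsequence.

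Two small repairs are needed there.
\begin{enumerate}
  \item[(a)] In (ii) you cannot reduce $\gamma=\tfrac12$ to $\gamma=0$. Property $1/2$-\textup{(RIE)} does not imply Property \textup{(RIE)} (Lemma~\ref{lemma: RIE eq to gamma RIE}(ii) is one-directional), so the pathwise left-point limits need not exist. No reduction is needed, however. For every $\gamma$, the $\gamma$-sums are the left-point sums plus $\gamma$ times the discrete covariation, up to one boundary term that vanishes uniformly. Hence they converge ucp to the It\^o integral plus $\gamma[X]$, and your subsequence argument applies unchanged.
  \item[(b)] In (i), a uniform bound on the total $1$-variation of $[X]^{\gamma,\pi^n}$ does not by itself give one control function valid for all $n$. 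Your fallback to Lemma~\ref{lemma: RIE eq to gamma RIE} is the right justification, and it is what the paper uses. Alternatively, for $\gamma\neq\tfrac12$, the symmetric part of the discrete $\gamma$-lift on $[t^n_k,t^n_\ell]$ equals $\tfrac12X_{t^n_k,t^n_\ell}\otimes X_{t^n_k,t^n_\ell}-\tfrac12(1-2\gamma)\sum_{j=k}^{\ell-1}X_{t^n_j,t^n_{j+1}}\otimes X_{t^n_j,t^n_{j+1}}$. This identity bounds the discrete covariation by a constant times $c(t^n_k,t^n_\ell)^{2/p}$.
\end{enumerate}

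The genuine difference is in the final step of (iii).

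The paper identifies $\X^{\circ}$ with the Lyons extension via uniqueness in Lyons' extension theorem \cite[Theorem~9.5]{Friz2010}, since both are $G^N(\R^d)$-valued extensions of the same level-$2$ path. This tacitly requires that the Stratonovich signature is a.s.\ of finite $p$-variation at every level. That is a standard but nontrivial fact about enhanced semimartingales.

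Your induction on the level avoids that input. By Proposition~\ref{prop: Lyons extension coincides with iterated integrals} and Theorem~\ref{thm: rough int against controlled path under RIE}, each level-$N$ component of the Lyons extension is the pathwise limit of midpoint sums of a lower-level component against $X^{i_N}$. These results apply with $\gamma=\tfrac12$ because Property $\gamma$-\textup{(RIE)} implies Property $1/2$-\textup{(RIE)}. The same sums built from the indistinguishable semimartingale $\langle e_{I'},\X^\circ\rangle$ converge ucp to $\int\langle e_{I'},\X^\circ_r\rangle\circ\d X^{i_N}_r$.

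What each route buys:
\begin{itemize}
  \item Your route never uses group-likeness, and it yields the $p$-variation regularity of $\X^\circ$ as a by-product. Its cost is reliance on the Riemann-sum representation of rough integrals of controlled integrands, which is exactly what Property $\gamma$-\textup{(RIE)} supplies.
  \item The paper's argument is shorter, given the external theory it leans on.
\end{itemize}
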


\begin{proof}
  \emph{(i):} By Lemma~\ref{lemma: RIE eq to gamma RIE}, we know that if a path satisfies Property $\gamma$-\textup{(RIE)} relative to some $\gamma \in [0,1]$, then it particularly satisfies Property $1/2$-\textup{(RIE)}. Then the claim holds true because of Lemma~\ref{lemma: weakly geometric rough path} and $\X^{\frac{1}{2},\pi,(2)}_{0,t} = \X^{\frac{1}{2},\pi,o,(2)}_{0,t}$, $t \in [0,T]$.
	
  \emph{(ii):} By construction, the pathwise rough integral $\int_0^t X_r(\omega) \otimes \d^{\gamma,\pi} X_r(\omega)$ constructed via Property $\gamma$-\textup{(RIE)} is given by the limit as $n \to \infty$ of Riemann sums:
  \begin{equation*}
    \sum_{k=0}^{N_n-1} (X_{\tau^n_k(\omega)}(\omega) + \gamma X_{\tau^n_k(\omega), \tau^n_{k+1}(\omega)}) \otimes X_{\tau^n_k(\omega) \wedge t, \tau^n_{k+1}(\omega) \wedge t} (\omega).
  \end{equation*}
  Suppose that $\gamma = \frac{1}{2}$. Then it is known that these Riemann sums converge uniformly in probability to the Stratonovich integral $\int_0^t X_r \otimes \circ \d X_r$, see e.g.~\cite[Chapter~II, Theorem~21, Theorem~22]{Protter2005}. And the result follows from the (almost sure) uniqueness of limits; see also part~(i) of~\cite[Lemma~4.3]{Das2025}.
	
  Suppose that $\gamma \neq \frac{1}{2}$. Then adding $[X(\omega)]^{\gamma,\pi^n}_{0,t}$,
  \begin{align*}
    &\sum_{k=0}^{N_n-1} (X_{\tau^n_k(\omega)}(\omega) + \gamma X_{\tau^n_k(\omega), \tau^n_{k+1}(\omega)}) \otimes X_{\tau^n_k(\omega) \wedge t, \tau^n_{k+1}(\omega) \wedge t} (\omega) \\
	&\qquad + \frac{1}{2}(1-2\gamma) X_{\tau^n_k(\omega) \wedge t, \tau^n_{k+1}(\omega) \wedge t} (\omega) \otimes X_{\tau^n_k(\omega) \wedge t, \tau^n_{k+1}(\omega) \wedge t} (\omega) \\
	&\quad = \sum_{k=0}^{N_n-1} (X_{\tau^n_k(\omega)}(\omega) + \frac{1}{2} X_{\tau^n_k(\omega) \wedge t, \tau^n_{k+1}(\omega) \wedge t}) \otimes X_{\tau^n_k(\omega) \wedge t, \tau^n_{k+1}(\omega) \wedge t} (\omega) \\
	&\qquad + \gamma (X_{\tau^n_k(\omega), \tau^n_{k+1}(\omega)}) - X_{\tau^n_k(\omega) \wedge t, \tau^n_{k+1}(\omega) \wedge t} (\omega)) \otimes X_{\tau^n_k(\omega) \wedge t, \tau^n_{k+1}(\omega) \wedge t} (\omega),
  \end{align*}
  which again converges uniformly in probability to the Stratonovich integral $\int_0^t X_r \otimes \circ \dd X_r$.
	
  \emph{(iii):} By (i), we have that $\X_{0,t}^{\frac{1}{2},\pi,(2)}=\X_{0,t}^{\frac{1}{2},\pi,o,(2)}$, $t\in [0,T]$. And since Lyons' lift is unique, the random signatures pathwise defined via Definition~\ref{def: Lyons' lift of weakly geometric rough path} (Lyons' lift of the weakly geometric rough path) and via Definition~\ref{def: gamma-signature} ($1/2$-signature) coincide $\P$-almost surely, noting that the $1/2$-signature coincides with the Lyons’ extension of $(1,\hX,\hbbX^{\frac{1}{2},\pi,(2)})$, see Definition~\ref{def: gamma-signature} and Appendix~\ref{sec: finite p-variation}.

  By (ii), the random weakly geometric rough path and the Stratonovich signature of $X$ truncated at level $2$ coincide $\P$-almost surely, and take values in $G^2(\R^d)$. Since Lyons' lift is unique, see \cite[Theorem~9.5]{Friz2010}, and the Stratonovich signature of $X$ truncated at any level $N \geq 3$ takes values in $G^N(\R^d)$, and so does the random signature truncated at level $N$ pathwise defined via Lyons' lift of the weakly geometric rough path, the proof is complete.
\end{proof}

\begin{corollary}\label{cor: UAT Stratonovich}
  Let $X$ be a $d$-dimensional continuous semimartingale, $\hX := (\cdot, X)$, and let $\mathcal{S}^{(2)} := \{\hbbX^{\circ, 2} (\omega): \omega \in \Omega\}$. Further, let $p \in (2,3)$ and $K \subset \widehat{C}_o^{p\textup{-var}}([0,T];G^2(\R^{1+d}))$ be a compact subset of the subspace of time-extended weakly geometric rough paths, see Theorem~\ref{thm: UAT geometric}, bounded with respect to the $p$-variation norm, and consider a continuous function $f \colon K \to \R$. Then for every $\epsilon > 0$, there exists a linear functional $\ell \in T(\R^{1+d})$ such that for almost every $\omega \in \Omega$,
  \begin{equation*}
    |f(\hbbX^{\circ, 2}(\omega)) - \langle \ell, \hbbX^{\circ}_T(\omega) \rangle | < \epsilon \qquad \text{for all} \quad \hbbX^{\circ, 2}(\omega) \in K \cap \mathcal{S}^{(2)},
  \end{equation*}
  where $\hbbX^{\circ}$ denotes the Stratonovich signature of $\hX$.
\end{corollary}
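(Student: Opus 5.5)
The plan is to reduce Corollary~\ref{cor: UAT Stratonovich} to the deterministic Theorem~\ref{thm: UAT geometric}, applied pathwise. The essential point is that the approximating functional $\ell$ comes from the deterministic theorem on the fixed compact set $K$. It therefore depends only on $f$, $K$ and $\epsilon$, and not on $\omega$.

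First, I apply Theorem~\ref{thm: UAT geometric} to $K$ and $f$. This yields $\ell \in T(\R^{1+d})$ with
\begin{equation*}
\sup_{\hbbX^{o,2}\in K}|f(\hbbX^{o,2}) - \langle \ell, \hbbX^o_T\rangle| < \epsilon,
\end{equation*}
where $\hbbX^o$ is Lyons' extension of $\hbbX^{o,2}$ as in Definition~\ref{def: Lyons' lift of weakly geometric rough path}.

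Second, I show there is a full-measure set $\Omega_0$ on which the identification with the Stratonovich signature holds. Specifically, on $\Omega_0$ the truncated Stratonovich signature $\hbbX^{\circ,2}(\omega)$ is a weakly geometric $p$-rough path, and the full Stratonovich signature $\hbbX^{\circ}(\omega)$ coincides with Lyons' extension of $\hbbX^{\circ,2}(\omega)$. Two ingredients give this.
\begin{itemize}
\item The time-extended process $\hX=(\cdot,X)$ is again a continuous semimartingale, and its time component has finite variation, so all integrals against it are Riemann--Stieltjes integrals.
\item The preceding lemma (part~(iii)) identifies the Stratonovich signature with Lyons' lift almost surely.
\end{itemize}
The lemma requires Property $\gamma$-(RIE) along suitable adapted partitions. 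I will invoke the known fact that a continuous semimartingale almost surely admits such a sequence: for instance dyadic-type stopping-time partitions $\tau^n_{k+1}=\inf\{t>\tau^n_k: |X_{\tau^n_k,t}|\ge 2^{-n}\}\wedge T$, passing to a subsequence if needed, as in \cite{Das2025}. Lemma~\ref{lemma: stability of gamma RIE} then transfers the property to $\hX$, since time is of bounded variation. Alternatively, if one prefers to avoid the RIE route, it is classical that the Stratonovich lift has finite $p$-variation for every $p>2$ and that its Stratonovich iterated integrals are a.s.\ multiplicative and of finite $p$-variation; uniqueness in Lyons' extension theorem then gives the identification directly.

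Third, I fix $\omega\in\Omega_0$ with $\hbbX^{\circ,2}(\omega)\in K\cap\mathcal S^{(2)}$. Since $\hbbX^{\circ}_T(\omega)=\hbbX^{o}_T$ for $\hbbX^{o,2}=\hbbX^{\circ,2}(\omega)$, the uniform bound from the first step gives $|f(\hbbX^{\circ,2}(\omega))-\langle\ell,\hbbX^\circ_T(\omega)\rangle|<\epsilon$. This proves the claim for almost every $\omega$.

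The main obstacle is the second step: making the almost sure identification of the stochastic Stratonovich iterated integrals of all orders with Lyons' extension rigorous for the time-extended process. I would handle it by citing the lemma's part~(iii) together with the existence of RIE-partitions, and fall back on the uniqueness of Lyons' extension plus the a.s.\ $p$-variation bounds if the RIE hypothesis is not directly available.
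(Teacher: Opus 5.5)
Your proposal is correct and follows essentially the paper's route; the paper states the corollary without a separate proof. You take $\ell$ from Theorem~\ref{thm: UAT geometric} on the fixed compact $K$, so it does not depend on $\omega$. You then conclude pathwise via the almost sure identification of the Stratonovich signature of $\hX$ with Lyons' extension of its weakly geometric level-$2$ lift, which is the preceding lemma, resp.\ the classical fact recalled at the start of Section~\ref{sec: application to continuous semimartingales}.

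One minor caveat on your RIE route: both that lemma and Lemma~\ref{lemma: stability of gamma RIE} require vanishing mesh, which Lebesgue-type stopping-time partitions built from $X$ alone need not have. Build them from $\hX=(\cdot,X)$ instead, or refine them by a dyadic time grid; your fallback via uniqueness in Lyons' extension theorem avoids this issue altogether.
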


Analogously to the Stratonovich signature, we now define the It{\^o} signature of a continuous semimartingale via iterated stochastic It{\^o} integration, which may be the preferred choice from a modeling perspective when having, for example, a financial application in mind.

\begin{definition}
  Let $X$ be a $d$-dimensional continuous semimartingale. Its \emph{It{\^o} signature} is the stochastic process $\X = (\X_t)_{t \in [0,T]}$ with values in $T_1((\R^d))$, whose components are recursively defined by
  \begin{equation*}
	\langle e_\emptyset, \X_t \rangle := 1, \qquad \langle e_I, \X^{\infty}_t \rangle := \int_0^t \langle e_{I'}, \X_r \rangle  \dd X_r^{i_{|I|}},
  \end{equation*}
  for each $I = (i_1, \dots, i_{|I|})$ and $t \in [0,T]$, where the integral is given as an It{\^o} integral. Its projection $\X^{ N}$ on $T^N(\R^d)$ is given by
  \begin{equation*}
	\X^{ N}_t = \Pi_{N}(\X_t) = \sum_{|I|\leq N} \langle e_I, \X_t \rangle e_I,
  \end{equation*}
  and called \emph{It{\^o} signature of $X$ truncated at level $N$}. The increments of the signature $\X^{\infty}$ are defined by
  \begin{equation*}
	\X_{s,t} := (\X_s)^{-1} \otimes \X_t, \qquad (s,t) \in \Delta_T.
  \end{equation*}
\end{definition}

Due to Theorem~\ref{thm: UAT non-geometric}, we can now formulate a probabilistic version of the universal approximation theorem, using the notion of the It{\^o} signature.

\begin{corollary}\label{cor: UAT Ito}
  Let $X$ be a $d$-dimensional continuous semimartingale. We define the random variable
  \begin{equation*}
	\hX := (\cdot, X, Q(X)),
  \end{equation*}
  with values in $C([0,T];\R^\hd)$ for $\hd = 1 + d + \frac{d(d+1)}{2}$, where
  \begin{equation*}
	Q(X):= ([X^1,X^1], \ldots, [X^1,X^d], [X^2,X^2], \ldots, [X^2,X^d], \ldots, [X^d,X^d]),
  \end{equation*}
  where $[X] = ([X^i,X^j])_{1\leq i,j \leq d}$ denotes the quadratic (co-)variation of $X$. Let $p \in (2,3)$. For some $L > 0$, let
  \begin{align*}
    \mathcal{S}^{(2)}_L := \left\{\hbX(\omega) = (\hX(\omega),\hbbX^{(2)}(\omega)) \, : \,
    \begin{aligned}
    &\omega \in \Omega,\,(X(\omega), \X^{(2)}(\omega)) \in \cC^p([0,T];\R^d), \\
    &\hX(\omega) = (\cdot,X(\omega),Q(X)(\omega)), \\
    &\|(\hX(\omega),\hbbX^{(2)}(\omega))\|_p + \|[\hX](\omega)\|_{\p} \leq L
   \end{aligned}
    \right\},
  \end{align*}
  where $\X^{(2)}_{s,t} := \int_s^t (X_r - X_s) \otimes \d X_r$, $(s,t) \in \Delta_T$, similarly we define $\hbbX^{(2)}$. Further, let $K \subset \widehat{\cC}^p([0,T];\R^{\hd})$ be a compact subset of the subspace of rough paths extended by time and the bracket terms, bounded with respect to the rough path norm, and consider a continuous function $f \colon K \to \R$. Then for every $\epsilon > 0$, there exists a linear functional $\ell \in T(\R^{\hd})$ such that for almost every $\omega \in \Omega$,
  \begin{equation*}
    |f(\hbX(\omega)) - \langle \ell, \hbbX_T(\omega) \rangle | < \epsilon \qquad \text{for all} \quad \hbX(\omega) \in K \cap \mathcal{S}^{(2)}_L,
  \end{equation*}
  where $\hbbX$ denotes the It\^o signature of $\hX$. 
\end{corollary}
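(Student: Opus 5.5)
The plan is to reduce Corollary~\ref{cor: UAT Ito} to the deterministic Theorem~\ref{thm: UAT non-geometric}, applied pathwise. The work is to identify, for almost every $\omega$, the It{\^o} signature $\hbbX_T(\omega)$ of the semimartingale $\hX$ with Lyons' extension of the rough path $\hbX(\omega)=(\hX(\omega),\hbbX^{(2)}(\omega))$. Once this holds, the functional $\ell$ given by Theorem~\ref{thm: UAT non-geometric} for the set $K_L$ works uniformly, because $K\cap\mathcal{S}^{(2)}_L\subseteq K_L$ by the definitions of the two sets.

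\emph{Step 1 (a pathwise lift and the identification of the bracket).} I would choose a sequence of adapted partitions $\pi^n$ with vanishing mesh, for instance dyadic or stopping-time partitions, along which the left-point Riemann sums converge almost surely and uniformly to the It{\^o} integrals $\int X\otimes\d X$. This is possible by passing to a subsequence, since the convergence holds uniformly in probability by \cite[Chapter~II, Theorem~21]{Protter2005}. Along this subsequence, Property (RIE) holds almost surely. Part (ii) of that property, the uniform $p/2$-variation bound, follows from BDG-type estimates: see \cite[Lemma~4.3]{Das2025} and \cite{Allan2023a}. By Lemma~\ref{lemma: RIE eq to gamma RIE}, this is Property $0$-(RIE). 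Proposition~\ref{prop: rough path lift under gamma RIE} then shows that $\X^{0,\pi,(2)}(\omega)$ coincides almost surely with the It{\^o} lift $\X^{(2)}(\omega)$, since both are uniform limits of the same sums. Lemma~\ref{lemma: quadratic variation under gamma RIE} with $\gamma=0$, together with the remark after it, gives $[\bX^{0,\pi}]=[X]^{0,\pi}=[X]$, the quadratic covariation, almost surely. Hence $Q(X)(\omega)$ equals the rough path bracket, and $\hbX(\omega)\in\widehat{\cC}^p$.

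\emph{Step 2 (the It{\^o} signature equals Lyons' extension).} Lemma~\ref{lemma: stability of gamma RIE} shows that $\hX$ satisfies Property $0$-(RIE) along the same partitions, because the time and $Q(X)$ components have finite $1$-variation. By Definition~\ref{def: gamma-signature} and Proposition~\ref{prop: Lyons extension coincides with iterated integrals}, the $0$-signature $\hbbX^{0,\pi}$ is Lyons' extension of $\hbX$, and each component is a limit of left-point Riemann sums of $\langle e_{I'},\hbbX^{0,\pi}\rangle$ against $\hX^{i}$. I would prove $\langle e_I,\hbbX_t\rangle=\langle e_I,\hbbX^{0,\pi}_t\rangle$ almost surely by induction on $|I|$. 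If the identity holds for $I'$, the integrands are the same adapted continuous process, so the It{\^o} integral is the in-probability limit of the same left-point sums. Passing to a further subsequence, which is possible since there are only countably many multi-indices (diagonal argument), the limits agree almost surely.

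\emph{Step 3 (conclusion).} Fix the null set $N$ on which the identifications fail. For $\omega\notin N$ with $\hbX(\omega)\in K\cap\mathcal{S}^{(2)}_L$, we have $\hbX(\omega)\in K_L$ and $\hbbX_T(\omega)$ is its Lyons' signature. Theorem~\ref{thm: UAT non-geometric} then gives the bound with a deterministic $\ell$.

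The main obstacle is Step 1, namely checking Property (RIE)(ii) almost surely along a single sequence of partitions that works for all the iterated integrals at once. I would first try citing \cite[Lemma~4.3]{Das2025} or \cite{Allan2023a} for the existence of such partitions for continuous semimartingales, and then refine that sequence by the diagonal subsequence argument.
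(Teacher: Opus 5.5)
Your architecture matches the paper's. You reduce $\omega$-wise to Theorem~\ref{thm: UAT non-geometric} using $K\cap\mathcal{S}^{(2)}_L\subseteq K_L$, the almost sure identification of the rough path bracket with $[X]$, and the identification of the It\^o signature with Lyons' extension.

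\textbf{The gap.} You route these identifications through Property~$0$-\textup{(RIE)}, and Step~1 asserts without proof that every continuous semimartingale satisfies \textup{(RIE)}~(ii) almost surely along some sequence of partitions. The corollary makes no such assumption. The paper keeps Property~$0$-\textup{(RIE)} as an explicit hypothesis in the separate Corollary~\ref{thm: UAT Ito}, and only remarks that it holds for ``various semimartingales''. Moreover, \cite[Lemma~4.3]{Das2025} is an identification result that presupposes \textup{(RIE)}; it does not produce it.

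``BDG-type estimates'' do not close this on their own. For each fixed $n$ they bound moments of the $\p$-variation of the discrete It\^o areas, so they give tightness in $n$. Property \textup{(RIE)}~(ii) instead requires a single pathwise control $c(\omega)$ dominating the discrete areas at all levels $n$ simultaneously. That is an equicontinuity statement. It would need, for instance, almost sure convergence of the discrete lifts to the It\^o lift in $p$-variation along a fast subsequence, which is a separate and nontrivial result. As written, your argument proves Corollary~\ref{thm: UAT Ito} under its extra hypothesis, not the present statement.

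\textbf{The fix.} The detour is unnecessary. The paper works directly with the stochastic lift. The It\^o lift of a continuous semimartingale is almost surely a $p$-rough path for every $p\in(2,3)$ \cite[Theorem~6.5]{Friz2018}. Its rough path bracket coincides almost surely with $[X]$ \cite[Remark~2.7]{Friz2020}. Hence $Q(X)$ is the bracket, and Theorem~\ref{thm: UAT non-geometric} applies pathwise.

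The paper treats the identification of the It\^o signature $\hbbX$ with Lyons' extension of $\hbX$ as standard. You can obtain it from your Step~2 induction by replacing the \textup{(RIE)} Riemann-sum representation with the usual consistency of rough and It\^o integration. For an adapted controlled integrand $(F,F')$, the compensated sums $\sum F_u\hX_{u,v}+F'_u\hbbX^{(2)}_{u,v}$ differ from the It\^o Riemann sums by $\sum F'_u\hbbX^{(2)}_{u,v}$. This difference tends to $0$ in probability when $\hbbX^{(2)}$ is the It\^o lift. Combined with Proposition~\ref{prop: Lyons extension coincides with iterated integrals}, this identifies the two signatures level by level almost surely, without choosing any partitions.
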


\begin{proof}
  We use that a semimartingale can be lifted to a random rough path via its iterated It{\^o} integrals, see e.g.~\cite[Theorem~6.5]{Friz2018}, and that the corresponding rough path bracket coincides almost surely with the quadratic variation of the semimartingale, see e.g.~\cite[Remark~2.7]{Friz2020}. The claim then immediately follows from the pathwise universal approximation theorem for linear functionals of the signature of general rough paths, which is Theorem~\ref{thm: UAT non-geometric}.
\end{proof}

\begin{remark}\label{rem: Brownian sig}
  For example, in the case where $X=W$ is a (correlated) $d$-dimensional Brownian motion with correlation matrix $\rho$, the quadratic (co-)variation is $[W]^{ij}_t=\rho_{ij} t$, $t\in[0,T]$, $i,j=1,\ldots,d$. That is, the quadratic variation is a linear function in time, and coincides almost surely with the rough path bracket of the random It{\^o} rough path. Consequently, see also Remark~\ref{rem: path extension}, in Brownian settings it is sufficient to consider only the time-extended Brownian motion $\hW=(\cdot,W)$ (i.e., without extending by quadratic variation), and a universal approximation result formulated analogously to Corollary~\ref{cor: UAT Ito} holds true.
\end{remark}

It turns out that, if the semimartingale $X$ satisfies Property $0$-\textup{(RIE)}, which is equivalent to Property $\textup{(RIE)}$, see Lemma~\ref{lemma: RIE eq to gamma RIE}, then the $0$-signature and the It{\^o} signature coincide almost surely. In particular, for almost every $\omega \in \Omega$, the limits of left-point Riemann sums exist $\omega$-wise.

\begin{lemma}\label{lemma: Ito properties}
  Let $p \in (2,3)$ and let $\pi^n = \{\tau^n_k\}$, $n \in \N$, be a sequence of adapted partitions (so that each $\tau^n_k$ is a stopping time), such that for almost every $\omega \in \Omega$, $(\pi^n(\omega))_{n \in \N}$ is a sequence of (finite) partitions of $[0,T]$ with vanishing mesh size. 
    
  Let $X$ be a $d$-dimensional continuous semimartingale, and suppose that for almost every $\omega \in \Omega$, $\sup\{|X_{\tau^n_k(\omega),\tau^n_{k+1}(\omega)}(\omega)| \, : k=0, \dots, N_n-1\}$ converges to $0$ as $n \to \infty$, and that the sample path $X(\omega)$ satisfies Property $0$-\textup{(RIE)} relative to $p$ and $(\pi^n(\omega))_{n \in \N}$.
  \begin{enumerate}
    \item[(i)] The random rough path pathwise defined via Proposition~\ref{prop: rough path lift under gamma RIE} for $\gamma = 0$ and the It{\^o} signature of $X$ truncated at level $2$ coincide $\P$-almost surely.
	\item[(ii)] The random $0$-signature $\X^{0,\pi}$ pathwise defined via Definition~\ref{def: gamma-signature} and the It{\^o} signature $\X$ of $X$ coincide $\P$-almost surely.
  \end{enumerate}
\end{lemma}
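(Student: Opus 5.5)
The plan mirrors the proof of the preceding lemma on the Stratonovich signature: identify the pathwise objects with the stochastic ones as limits of the same Riemann sums. Then use uniqueness of limits, together with uniqueness of Lyons' extension or of the recursive construction, to pass to all levels.

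\emph{(i):} For almost every $\omega$, the pathwise integral $\int_0^t X_r(\omega)\otimes \d^{0,\pi}X_r(\omega)$ from Property $0$-\textup{(RIE)} is, by definition, the uniform limit as $n\to\infty$ of the left-point sums
\begin{equation*}
\sum_{k} X_{\tau^n_k}\otimes X_{\tau^n_k\wedge t,\tau^n_{k+1}\wedge t}.
\end{equation*}
Since the $\tau^n_k$ are stopping times and the mesh vanishes almost surely, these sums converge uniformly on compacts in probability to the It{\^o} integral $\int_0^t X_r\otimes \d X_r$; see \cite[Chapter~II, Theorem~21]{Protter2005}. Passing to a subsequence converging almost surely, and using that the full sequence already converges pathwise, the two limits agree almost surely, simultaneously for all $t$ by continuity. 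Subtracting $X_0\otimes X_{0,t}$ yields $\X^{0,\pi,(2)}_{0,t}=\int_0^t X_{0,r}\otimes \d X_r$ almost surely, which is the level-$2$ It{\^o} signature. Levels $0$ and $1$ agree trivially.

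\emph{(ii):} I proceed by induction on $|I|$, with the base cases $|I|\le 2$ given by (i). Assume $\langle e_{I'},\X^{0,\pi}\rangle=\langle e_{I'},\X\rangle$ almost surely as processes. By Definition~\ref{def: gamma-signature} and Theorem~\ref{thm: rough int against controlled path under RIE}, the component $\langle e_I,\X^{0,\pi}_t\rangle$ is, for almost every $\omega$, the limit of
\begin{equation*}
\sum_k \langle e_{I'},\X^{0,\pi}_{\tau^n_k}\rangle X^{i_{|I|}}_{\tau^n_k\wedge t,\tau^n_{k+1}\wedge t}.
\end{equation*}
By the induction hypothesis, this sum equals the same expression with $\X$ in place of $\X^{0,\pi}$ almost surely. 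The integrand $\langle e_{I'},\X\rangle$ is a continuous adapted process, so these left-point sums converge in probability, uniformly on compacts, to the It{\^o} integral $\int_0^t\langle e_{I'},\X_r\rangle\dd X^{i_{|I|}}_r=\langle e_I,\X_t\rangle$. Uniqueness of limits along an almost surely convergent subsequence again gives equality almost surely. Since there are only countably many multi-indices, the null sets can be united, so $\X^{0,\pi}=\X$ almost surely.

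\textbf{Main obstacle.} The delicate point is ensuring that the pathwise rough integral of Definition~\ref{def: gamma-signature} really is the limit of the plain left-point Riemann sums along $\pi$, without the second-order compensator term. This is needed for the sums to match the It{\^o} approximations; it is guaranteed by Theorem~\ref{thm: rough int against controlled path under RIE} for $\gamma=0$, which I would invoke. A secondary point is measurability and adaptedness of $\langle e_{I'},\X^{0,\pi}\rangle$. I would avoid this issue by replacing the integrand with the It{\^o} process via the induction hypothesis before applying the stochastic convergence theorem.
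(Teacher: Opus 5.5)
Your proposal is correct. Part (i) matches the paper in substance. The paper cites \cite[Lemma~3.1(i)]{Allan2023c} after noting that Property $0$-\textup{(RIE)} is Property \textup{(RIE)}, and you carry out that argument directly via \cite[Chapter~II, Theorem~21]{Protter2005} and an almost surely convergent subsequence.

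For (ii) your route genuinely differs. The paper argues in one line: the $0$-signature is Lyons' extension of $(1,X_{0,\cdot},\X^{0,\pi,(2)})$ (Appendix~\ref{sec: finite p-variation}), so (i) and uniqueness in Lyons' extension theorem give the claim. That is shorter, but uniqueness only applies if the It\^o signature is itself, almost surely, a multiplicative functional of finite $p$-variation controlled by a control function at every level. Chen's relation for the increments $\X_s^{-1}\otimes\X_t$ is automatic, but the bounds $|\X^{(N)}_{s,t}|\lesssim c(s,t)^{N/p}$ for the iterated It\^o integrals are left implicit in the paper. They follow, for instance, from identifying It\^o integrals with rough integrals against the It\^o lift.

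Your level-by-level induction avoids this regularity question entirely. It uses only two facts: Definition~\ref{def: gamma-signature} with Theorem~\ref{thm: rough int against controlled path under RIE} makes each component a pathwise uniform limit of plain left-point sums; and left-point sums of the continuous adapted integrand $\langle e_{I'},\X\rangle$ along stopping-time partitions converge ucp to the It\^o integral. Substituting the It\^o integrand via the induction hypothesis before invoking the stochastic limit theorem also neatly sidesteps adaptedness and measurability of the pathwise object.

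In effect, your induction supplies exactly the identification of pathwise rough integrals with It\^o integrals that the uniqueness argument tacitly needs. The cost is invoking the stochastic convergence theorem once per multi-index rather than once at level $2$.
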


\begin{proof}
  \emph{(i):} Since Property $0$-\textup{(RIE)} and Property \textup{(RIE)} are equivalent, see also Lemma~\ref{lemma: RIE eq to gamma RIE}, this is the statement of part~(i) of~\cite[Lemma~3.1]{Allan2023c}.
	
  \emph{(ii):} Since the $0$-signature coincides with then Lyons' lift of the rough path $(1,X_{0,\cdot},\X^{0,\pi,(2)})$, see Definition~\ref{def: gamma-signature} and Appendix~\ref{sec: finite p-variation}, and by (i) the random rough path pathwise defined via Proposition ~\ref{prop: rough path lift under gamma RIE} and the It\^o signature of $X$ truncated at level $2$ coincide $\P$-almost surely, we have by the uniqueness of Lyons' lift, that the signatures coincide $\P$-almost surely.
\end{proof}

Moreover, if the semimartingale $X$ satisfies Property $\gamma$-\textup{(RIE)}, the pathwise quadratic variation and the stochastic quadratic variation coincide almost surely.

\begin{lemma}\label{lemma: stochastic quadratic variation}
  Let $\gamma \in [0,1]$, $p \in (2,3)$ and let $\pi^n = \{\tau^n_k\}$, $n \in \N$, be a sequence of adapted partitions (so that each $\tau^n_k$ is a stopping time), such that for almost every $\omega \in \Omega$, $(\pi^n(\omega))_{n \in \N}$ is a sequence of (finite) partitions of $[0,T]$ with vanishing mesh size.
    
  Let $X$ be a $d$-dimensional continuous semimartingale, and suppose that for almost every $\omega \in \Omega$, $\sup\{|X_{\tau^n_k(\omega),\tau^n_{k+1}(\omega)}(\omega)| \, : k=0, \dots, N_n-1\}$ converges to $0$ as $n \to \infty$, and that the sample path $X(\omega)$ satisfies Property $\gamma$-\textup{(RIE)} relative to $\gamma$, $p$ and $(\pi^n(\omega))_{n \in \N}$. We define the random variable
  \begin{equation*}
	\hX := (\cdot, X, (1-2\gamma)Q(X)),
  \end{equation*}
  with values in $C([0,T];\R^\hd)$ for $\hd = 1 + d + \frac{d(d+1)}{2}$, where
  \begin{equation*}
	Q(X):= ([X^1,X^1], \ldots, [X^1,X^d], [X^2,X^2], \ldots, [X^2,X^d], \ldots, [X^d,X^d]),
  \end{equation*}
  where $[X] = ([X^i,X^j])_{1\leq i,j \leq d}$ denotes the quadratic (co-)variation of $X$. Then $\hX$ and the random variable that is pathwise defined via~\eqref{eq: extended path} coincide $\P$-almost surely.
\end{lemma}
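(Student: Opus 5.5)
The time and $X$ components of the two random variables agree by construction, so the statement reduces to an almost sure identity for the last block. Concretely, it suffices to show that for every $1\le i\le j\le d$,
\begin{equation*}
  [X^i,X^j]^{\gamma,\pi(\omega)}_t(\omega) = (1-2\gamma)[X^i,X^j]_t(\omega)\qquad\text{for all } t\in[0,T],
\end{equation*}
for $\P$-almost every $\omega$. Here the left-hand side is the pathwise bracket from Lemma~\ref{lemma: quadratic variation under gamma RIE}, which enters \eqref{eq: extended path} through $\QX$, and the right-hand side is the stochastic quadratic covariation. For $\gamma=\frac12$ both sides vanish by definition, so I assume $\gamma\neq\frac12$.

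By Lemma~\ref{lemma: quadratic variation under gamma RIE}, for almost every $\omega$ the pathwise bracket is the pointwise limit
\begin{equation*}
  [X^i,X^j]^{\gamma,\pi}_t=\lim_{n\to\infty}(1-2\gamma)\sum_k X^i_{\tau^n_k\wedge t,\tau^n_{k+1}\wedge t}X^j_{\tau^n_k\wedge t,\tau^n_{k+1}\wedge t}.
\end{equation*}
On the stochastic side, the partitions $\pi^n$ consist of stopping times with mesh tending to $0$ almost surely. For such random partitions, the classical result on quadratic covariation of continuous semimartingales (e.g.\ \cite[Chapter~II, Theorem~22--23]{Protter2005}) gives that the same sums $\sum_k X^i_{\tau^n_k\wedge t,\tau^n_{k+1}\wedge t}X^j_{\tau^n_k\wedge t,\tau^n_{k+1}\wedge t}$ converge to $[X^i,X^j]_t$ uniformly on $[0,T]$ in probability.

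To combine the two limits, I fix $t$ and pass to a subsequence $(n_m)$ along which the convergence in probability becomes almost sure convergence. Along this subsequence the pathwise limit still exists and is unchanged, so uniqueness of limits gives the identity at $t$ almost surely. Taking a countable dense set of times, such as the rationals together with $T$, and intersecting the corresponding null sets yields the identity simultaneously for all such $t$. Both sides are continuous in $t$, by Lemma~\ref{lemma: quadratic variation under gamma RIE} and by continuity of the stochastic bracket, so the identity extends to all of $[0,T]$ outside a single null set. Taking a further finite union of null sets over the pairs $(i,j)$ gives the almost sure equality of $\hX$ with the path defined in \eqref{eq: extended path}.

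The main obstacle is justifying the convergence in probability along \emph{random} partitions of stopping times with almost surely vanishing mesh, rather than deterministic ones. I would handle it with the polarization/It\^o product identity $X^i_{a,b}X^j_{a,b}=\int_a^b X^i_{a,r}\,\d X^j_r+\int_a^b X^j_{a,r}\,\d X^i_r+[X^i,X^j]_{a,b}$. This rewrites the discretised sum as $[X^i,X^j]_t$ plus stochastic integrals whose integrands are the piecewise processes $X_r-X_{\tau^n_k}$ on $(\tau^n_k,\tau^n_{k+1}]$. These integrands are predictable, locally bounded, and tend to $0$ by continuity of paths, so dominated convergence for stochastic integrals (ucp) shows the stochastic integrals vanish in the limit.
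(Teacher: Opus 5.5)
Your proposal is correct and follows the same route as the paper: you reduce to the bracket block and identify the pathwise bracket via Lemma~\ref{lemma: quadratic variation under gamma RIE} as the limit of $(1-2\gamma)$ times the discretised covariation sums, then use Protter's ucp convergence of these sums to $[X^i,X^j]$ and conclude with an almost surely convergent subsequence and uniqueness of limits. Your extra steps (the countable dense set plus continuity, and the integration-by-parts sketch) are valid but unnecessary: Protter's Theorems~II.22--23 are already stated for random partitions of stopping times tending to the identity, and the convergence is uniform in $t$, so a single subsequence gives the identity for all $t$ at once.
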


\begin{proof}
  This clearly holds true for $\gamma = \frac{1}{2}$. Therefore suppose that $\gamma \neq \frac{1}{2}$. By definition, the pathwise quadratic variation $[X^i(\omega),X^j(\omega)]^{\gamma,\pi}$ is given by the limit as $n \to \infty$ of:
  \begin{equation*}
	(1-2\gamma)\sum_{k=0}^{N_n-1} X^i_{\tau^n_k(\omega) \wedge t, \tau^n_{k+1}(\omega) \wedge t} (\omega) X^j_{\tau^n_k(\omega) \wedge t, \tau^n_{k+1}(\omega) \wedge t} (\omega).
  \end{equation*}
  We know that these sums converge uniformly (in $t \in [0,T]$) in probability to the quadratic variation $(1-2\gamma) [X^i,X^j]$, see e.g.~\cite[Chapter~II, Theorem~22]{Protter2005}. By taking a subsequence, if necessary, it follows the (almost sure) uniqueness of limits.
\end{proof}

As a consequence of Corollary~\ref{cor: pathwise UAT under gamma RIE} (or Corollary \ref{cor: UAT Ito}), Lemma~\ref{lemma: Ito properties} and Lemma~\ref{lemma: stochastic quadratic variation}, we formulate universality of the It{\^o} signature of a continuous semimartingale, whose sample paths almost surely satisfy Property $0$-\textup{(RIE)} or, equivalently, Property \textup{(RIE)}, that is, so that the It{\^o} signature almost surely exists as collection of iterated integrals, where the integral exists as limit of pathwise left-point Riemann sums. This holds true for various semimartingales relative to suitable sequences of partitions. We refer to~\cite[Section~3]{Allan2023c}.

\begin{corollary}[Universal approximation theorem for the It{\^o} signature]\label{thm: UAT Ito}
  Let $p \in (2,3)$ and let $\pi^n = \{\tau^n_k\}$, $n \in \N$, be a sequence of adapted partitions (so that each $\tau^n_k$ is a stopping time), such that for almost every $\omega \in \Omega$, $(\pi^n(\omega))_{n \in \N}$ is a sequence of (finite) partitions of $[0,T]$ with vanishing mesh size. 
    
  Let $X$ be a $d$-dimensional continuous semimartingale, and suppose that for almost every $\omega \in \Omega$, $\sup\{|X_{\tau^n_k(\omega),\tau^n_{k+1}(\omega)}(\omega)| \, : k=0, \dots, N_n-1\}$ converges to $0$ as $n \to \infty$, and that the sample path $X(\omega)$ satisfies Property $0$-\textup{(RIE)} relative to $p$ and $(\pi^n(\omega))_{n \in \N}$. 
  
  Let $\hX := (\cdot, X, Q(X))$, where $Q(X)$ is defined as in Corollary~\ref{cor: UAT Ito}, and $\mathcal{S}^{(2)} := \{(\hX,\hbbX^{(2)})(\omega): \omega \in \Omega\}$, where $\hbbX^{(2)}_{s,t} := \int_s^t (\hX_r - \hX_s) \otimes \d \hX_r$, $(s,t) \in \Delta_T$. Further, let $K \subset \widehat{\cC}^{p\textup{-var}}([0,T];\R^\hd)$ be a compact subset of the subspace of rough paths extended by time and the bracket terms, bounded with respect to the rough path norm, and consider a continuous function $f \colon K \to \R$. For some $L > 0$, let $K_{0,\pi,L} \subset K$ be the subset defined by
  \begin{align*}
	K_{0,\pi,L} := \Biggl\{(\hX,\hbbX^{0,\pi,(2)})\in K \, : \,
    \begin{aligned}
    &X \text{ satisfies Property $0$-\textup{(RIE)} relative to $p$ and $\pi$} \\
    &\text{such that } \|(\hX,\hbbX^{0,\pi,(2)})\|_p + \|[\hX]^{0,\pi}\|_{\p} \leq L
    \end{aligned}
    \Biggr\}.
  \end{align*}
  Then for every $\epsilon > 0$, there exists a linear functional $\ell \in T(\R^\hd)$ such that for almost every $\omega \in \Omega$,
  \begin{equation*}
    |f((\hX,\hbbX^{(2)})(\omega)) - \langle \ell, \hbbX_T (\omega)\rangle| < \epsilon \qquad \text{for all} \quad (\hX,\hbbX^{(2)})(\omega) \in K_{0,\pi,L} \cap \mathcal{S}^{(2)},
  \end{equation*}
  where $\hbbX$ denotes the It{\^o} signature of $\hX$.
\end{corollary}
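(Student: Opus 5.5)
The plan is to reduce the statement to Corollary~\ref{cor: pathwise UAT under gamma RIE} with $\gamma=0$, applied pathwise, and then to identify the pathwise objects with their stochastic counterparts on a full-measure set. First, apply Corollary~\ref{cor: pathwise UAT under gamma RIE} with $\gamma=0$, the given $p$, $L$, $K$, $f$ and $\epsilon$. This is a purely deterministic statement for a fixed partition sequence. However, here the partitions $\pi(\omega)$ depend on $\omega$. To handle this, note that the proof of Corollary~\ref{cor: pathwise UAT under gamma RIE} only uses that $K_{0,\pi,L}\subset K_L$, with $K_L$ as in Theorem~\ref{thm: UAT non-geometric}. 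The functional $\ell$ produced by Theorem~\ref{thm: UAT non-geometric} depends only on $K_L$, $f$ and $\epsilon$, not on $\pi$. So I will fix one $\ell\in T(\R^{\hd})$ such that $\sup_{\hbX\in K_L}|f(\hbX)-\langle\ell,\hbbX_T\rangle|<\epsilon$, where $\hbbX$ is Lyons' extension of $\hbX$. The union over all $\omega$ of the sets $K_{0,\pi(\omega),L}$ is then also contained in $K_L$, because each element is a rough path in $\widehat{\cC}^p$ whose bracket equals $[\hX]^{0,\pi}$, by \cite[Proposition~2.18]{Allan2023b} as in the proof of Corollary~\ref{cor: pathwise UAT under gamma RIE}.

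Second, I identify the objects almost surely. By Lemma~\ref{lemma: stochastic quadratic variation} with $\gamma=0$, the pathwise bracket $Q^{0,\pi}(X)(\omega)$ agrees with the stochastic quadratic covariation $Q(X)(\omega)$ for a.e.\ $\omega$. Hence the stochastically defined $\hX$ agrees a.s.\ with the pathwise path \eqref{eq: extended path}. By Lemma~\ref{lemma: stability of gamma RIE}, $\hX(\omega)$ satisfies Property $0$-(RIE) relative to $p$ and $\pi(\omega)$. For this step I treat the time and bracket components as finite-$1$-variation perturbations whose increments along $\pi^n(\omega)$ vanish, which follows from the vanishing mesh and the continuity of $Q$. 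Next I apply Lemma~\ref{lemma: Ito properties} to the continuous semimartingale $\hX$; its components are continuous semimartingales, since time and $[X]$ are continuous adapted finite-variation processes. This gives, on a full-measure set $\Omega_0$, that $\hbbX^{(2)}(\omega)=\hbbX^{0,\pi,(2)}(\omega)$ and that the It\^o signature of $\hX$ coincides with the $0$-signature $\hbbX^{0,\pi}(\omega)$. By Definition~\ref{def: gamma-signature} and Appendix~\ref{sec: finite p-variation}, the $0$-signature is Lyons' extension of $(\hX,\hbbX^{0,\pi,(2)})(\omega)$.

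Third, I conclude. For $\omega\in\Omega_0$ with $(\hX,\hbbX^{(2)})(\omega)\in K_{0,\pi,L}\cap\mathcal S^{(2)}$, the point lies in $K_L$. Its Lyons extension at time $T$ equals $\hbbX_T(\omega)$, the It\^o signature. The uniform bound therefore gives $|f((\hX,\hbbX^{(2)})(\omega))-\langle\ell,\hbbX_T(\omega)\rangle|<\epsilon$.

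I expect the main obstacle to be the measure-theoretic bookkeeping. The almost-sure identifications in Lemmas~\ref{lemma: Ito properties} and \ref{lemma: stochastic quadratic variation} involve subsequences and limits in probability. I would handle this by intersecting countably many null-set complements, once for each multi-index $I$; each level of the signature is a single a.s.\ identity, so this is a countable intersection. A second point to check is that Lemma~\ref{lemma: Ito properties} legitimately applies to the extended semimartingale $\hX$ rather than to $X$. The hypothesis on $\hX$'s increments along $\pi^n(\omega)$ follows from uniform continuity of the time component and of $Q(X)(\omega)$.
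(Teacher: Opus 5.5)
Your proposal is correct and follows the paper's proof. Like the paper, you identify the It\^o objects of $\hX$ with the pathwise $0$-(RIE) objects almost surely via Lemma~\ref{lemma: stochastic quadratic variation} and part~(ii) of Lemma~\ref{lemma: Ito properties}, and then invoke the pathwise result, namely Corollary~\ref{cor: pathwise UAT under gamma RIE}, equivalently Theorem~\ref{thm: UAT non-geometric} through $K_{0,\pi,L}\subset K_L$. Your added observation is worthwhile: $\ell$ depends only on $K_L$, $f$ and $\epsilon$, not on the $\omega$-dependent partitions, so one $\ell$ serves every $\omega$; the paper's proof leaves this implicit.
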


\begin{proof}
  We use that for almost every $\omega \in \Omega$, the random $0$-signature of $\hX(\omega)$ and the It{\^o} signature $\hbbX(\omega)$ coincide, see Lemma~\ref{lemma: stochastic quadratic variation} and part~(ii) of Lemma~\ref{lemma: Ito properties}. The claim then immediately follows from the pathwise universal approximation theorem for linear functionals of the $\gamma$-signature, which is Corollary~\ref{cor: pathwise UAT under gamma RIE}.

  Alternatively, one could use Corollary~\ref{cor: UAT Ito} to prove the claim.
\end{proof}

\begin{remark}
  An analogous result also holds true when considering the Stratonovich signature of $X$ instead of the It{\^o} signature of $X$ (also if almost all sample paths only satisfy Property $1/2$-\textup{(RIE)}). This can be shown using the results of the previous sections. This is, however, weaker than the classical universal approximation theorem stated in Corollary~\ref{cor: UAT Stratonovich} since we impose an assumption on the sample paths of the semimartingale to allow for a statement about the It{\^o} signature.
\end{remark} 

\begin{remark}\label{rem: Brownian sig 2}
  Similar to Remark~\ref{rem: Brownian sig}, for example, for a correlated $d$-dimensional Brownian motion $X$, the quadratic (co-)variation is actually a linear function in time. Consequently, it suffices to consider only the time-extended path $(\cdot,X)$, and a universal approximation result formulated analogously to Corollary~\ref{thm: UAT Ito} holds true.
\end{remark}

With Remark~\ref{rem: Brownian sig} (and Remark~\ref{rem: Brownian sig 2}) in mind, when approximating continuous functionals using the Brownian signature, we expect the It{\^o} signature and the Stratonovich signature of the time-extended Brownian motion to perform equally well because both do satisfy the universal approximation property.

\section{Numerical examples}\label{sec: numerical results}

As seen in the previous section, the universal approximation property holds true for (linear functionals of) the Stratonovich signature of the time-extended path, whereas for the It{\^o} signature we need to consider the path extended by time and its quadratic (co)-variation. This naturally leads to the question when it is beneficial in practice to extend the path additionally by its quadratic (co-)variation. Therefore, we conclude the study of It{\^o} signatures with a numerical analysis that illustrates briefly the practical implications of using It{\^o} signatures in the context of mathematical finance.\footnote{The code used to generate the results in this section is available at \url{https://github.com/mihribanceylan/Ito-signatures-Calibration-and-Pricing}.}

We consider calibration to time-series data, payoff approximation, and pricing tasks for options that naturally depend on quadratic variation. In particular, we cover options on realized volatility, as well as covariance and correlation swaps and calls. Throughout these experiments, we find the following: When assuming that the underlying price dynamics are driven by Brownian motion whose quadratic variation is equal to time, here: Heston model, it turns out that---as one may expect---both the It{\^o} and the Stratonovich signature achieve very small approximation errors in this setting, with only minor quantitative differences.

When assuming that the underlying price dynamics are time-changed or when considering payoffs that depend explicitly on realized variance, covariance, or correlation, It{\^o} feature maps perform noticeably better -- due to the quadratic variation extension -- in the sense that we observe substantially lower test errors and more accurate prices.

In each experiment, we compare Stratonovich and It{\^o} feature maps under a common protocol (same truncation level, regularization, and train/test split). For the It{\^o} features, we extend the driving path by time and quadratic variation or, in Brownian settings, only by time as we classically do it for the Stratonovich features. These numerical experiments aim at demonstrating that the It{\^o} signature results in better out-of-sample performance when the quadratic variation actually contains additional information of the path.

A classical problem in mathematical finance, that we do not address here, is the hedging of financial derivatives, for example the mean variance optimal hedging problem. Since the profit of a trading strategy is defined in terms of an It{\^o} integral and the It{\^o} integral can naturally be written as a linear functional of the It{\^o} signature, it might be worth exploring at some point the performance of It{\^o} signatures for the hedging problem, or more precisely, for a linearized version of that problem which has already been studied in \cite{Lyons2020} in the context of signatures.

\subsection{Calibration of signature models}

We first consider the calibration of a signature model to simulated time-series data and take a similar approach as in \cite{Cuchiero2023a}. We fix a time horizon $T>0$, and let $X = (X^1, \dots, X^d)$ be a $d$-dimensional continuous local martingale. We then consider the extended process $\hX$ with values in $\R^{\hd}$, where $\hd$ denotes the dimension of the extended path. We fix a truncation level $N\ge 1$ and denote by $\hbbX^N_t$ the truncated signature of $\hX$ at time $t$ at level $N$. The model we want to calibrate is a signature model, see e.g.~\cite{Cuchiero2023a}, of the form
\begin{equation*}
  S(\ell)_t=S_0+\sum_{0<|I|\le N}\ell_I\langle e_I,\hbbX_t\rangle,\quad \ell_I\in\R,
\end{equation*}
where we set $\ell_\emptyset:=S_0$. We fix a time grid $0=t_0< t_1 < \ldots < t_n = T$, and observe a price path $(S_{t_i})_{i=1}^n$ on this grid, where $n \ge 1$ denotes the number of time steps between $0$ and $T$. The calibration problem consists of finding $\ell^\ast\in \R^{d^\ast}$ such that the loss function
\begin{align*}
  L_{\alpha}(\ell)
  &:=\sum_{i=1}^n\big(S(\ell)_{t_i}-S_{t_i}\big)^2+\alpha\|\ell\|_1\\
  &= \sum_{i=1}^n\bigg(S_0+\sum_{0<|I|\le N}\ell_I\langle e_I,\hbbX_{t_i}\rangle-S_{t_i}\bigg)^2+\alpha\|\ell\|_1,
\end{align*}
is minimized, where $d^\ast:=\frac{\hd^{N+1}-1}{d}$ is the dimension of the signature truncated at level $N$ and $\alpha \|\ell\|_1$ denotes a fixed $L^1$ penalization, so we perform a Lasso regression.

We train on one trajectory on the time interval $[0,T]$ with $T=1$, $n=2000$, and test on $1000$ different realizations on $[0,0.5]$. To assess the accuracy, we report mean squared errors (MSEs) between the predicted and observed trajectories, i.e.,
\begin{equation*}
  \frac{1}{n}\sum_{i=1}^n (S(\ell^\ast)_{t_i}-S_{t_i})^2,
\end{equation*}
on both the training trajectory (in-sample MSE) and the test trajectories (out-of-sample MSE), where the latter is computed as an average of the MSEs on the $1000$ test paths. For the computation of the signature we use available packages, e.g.~the package \verb|iisignature| developed by \cite{Reizenstein2018}, or \verb|esig|\footnote{https://pypi.org/project/esig/}. Throughout, both feature maps use the same penalty $\alpha=10^{-5}$, and the same train and test paths.

\begin{example}[Heston model]\label{ex: heston calibration}
  We generate time series (via an Euler scheme) from the Heston stochastic volatility model under the physical measure $\P$
  \begin{align*}
    \dd S_t&=\mu S_t\dd t +S_t\sqrt{V_t}\dd W_t,\\
    \dd V_t&=\kappa(\theta-V_t)\dd t+\sigma\sqrt{V_t}\dd B_t,
  \end{align*}
  with $[B,W]_t=\rho t$, $\rho\in[-1,1],$ see \cite{Heston1993}. We set the model parameters to be
  \begin{equation*}
    \{S_0,V_0,\mu,\kappa,\theta,\sigma,\rho\}:=\{1,0.08,0.001,0.5,0.15,0.25,-0.5\}.
  \end{equation*}
  We work under an equivalent local martingale measure~$\mathbb Q$, and specify the processes $W^{\mathbb{Q}}$ and $B^{\mathbb{Q}}$ by
  \begin{equation*}
    \dd W^{\mathbb{Q}}_t=\frac{\dd S_t}{S_t\sqrt{V_t}}, \qquad \dd B_t^{\mathbb{Q}}=\frac{\dd V_t}{\sigma\sqrt{V_t}}.
  \end{equation*}
  Then, $W^{\mathbb Q}$ and $B^{\mathbb Q}$ are Brownian motions with respect to $\mathbb Q$ with correlation~$\rho$, and the dynamics of $(S,V)$ can be written as $\dd S_t=S_t\sqrt{V_t}\dd W_t^{\mathbb{Q}}$ and $\dd V_t=\sigma\sqrt{V_t}\dd B^{\mathbb{Q}}_t$. We now aim to approximate these dynamics with signature models with $(W^{\mathbb Q}, B^{\mathbb Q})$ being the underlying process. To this end, we consider two signature models: one based on Stratonovich signatures, the other based on It{\^o} signatures of the time extended path $\hX=(\cdot,W^{\mathbb{Q}},B^{\mathbb{Q}})$. More precisely, we calibrate the following signature models:
  \begin{align*}
    S^{\text{Strat}}(\ell)_t&=S_0+ \sum_{|I|\le N}\ell_I\langle \tilde{e}_I,\hbbX_t^{\text{Strat}}\rangle=S_0+\int_0^t\sum_{|I|\le N}\ell_I\langle e_I,\hbbX^{\text{Strat}}_r\rangle \dd W^{\mathbb Q}_r,\\
    S^{\text{It\^o}}(\ell)_t&=S_0+ \sum_{|I|\le N}\ell_I\langle e_I\otimes e_1,\hbbX_t^{\text{It\^o}}\rangle=S_0+\int_0^t\sum_{|I|\le N}\ell_I\langle e_I,\hbbX^{\text{It\^o}}_r\rangle \dd W^{\mathbb Q}_r,  
  \end{align*}
  where $\tilde{e}_I=e_I\otimes e_1-\frac{1}{2}\rho_{i_{|I|}1}e_{I^\prime}\otimes e_0$, $\rho_{i_{|I|}1}$ is the correlation between $\hX^{i_{|I|}}$ and the Brownian motion $W^{\mathbb Q}$. Here, we choose $N=2$ and $S_0=1$ and obtain the following results:
  \begin{figure}[h]
    \centering
    \begin{subfigure}[h]{0.48\linewidth}
    \centering
    \includegraphics[width=\linewidth]{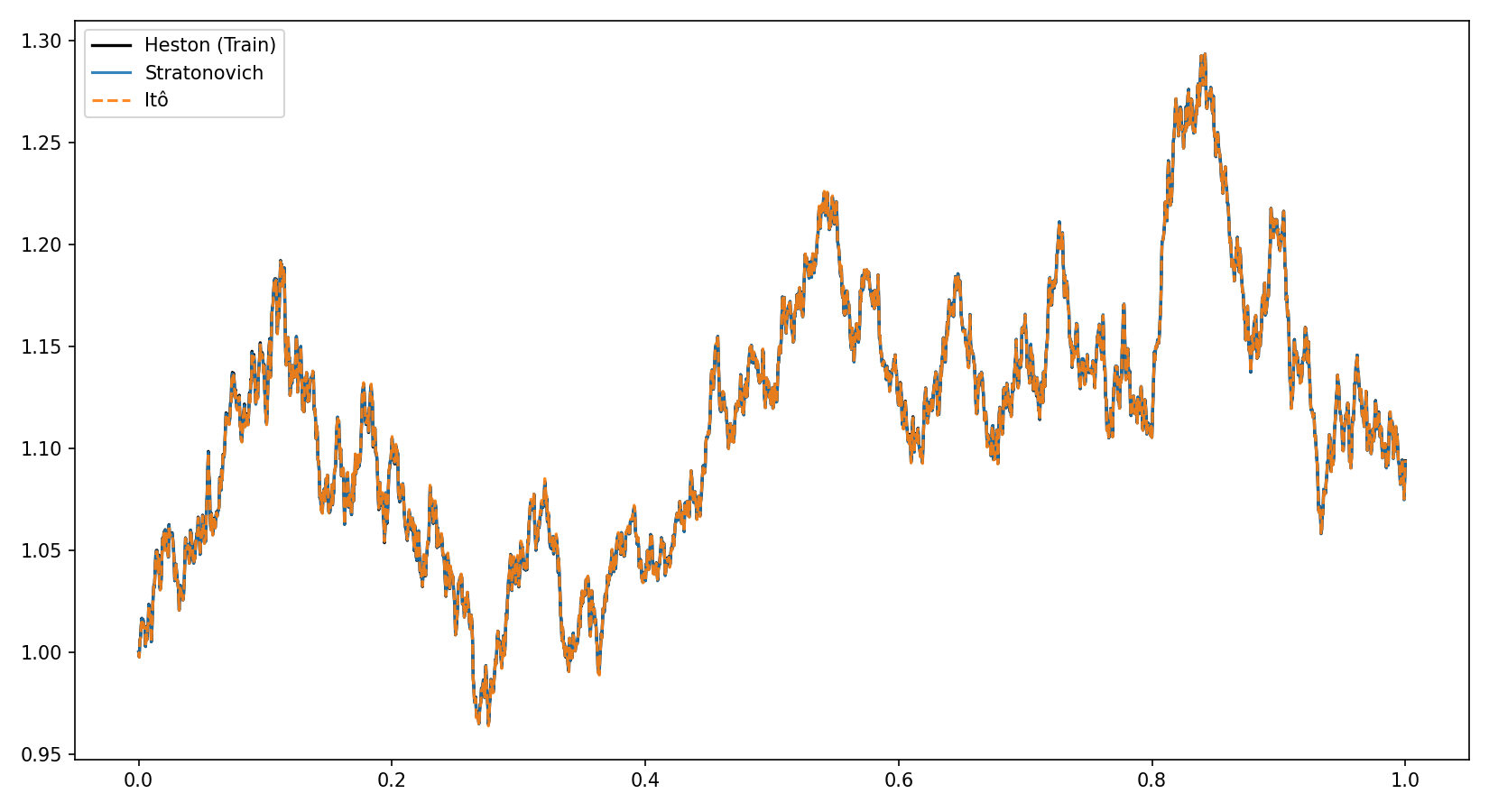}
    \caption{In-sample.}
    \end{subfigure}\hfill
    \begin{subfigure}[h]{0.48\linewidth}
    \centering
    \includegraphics[width=\linewidth]{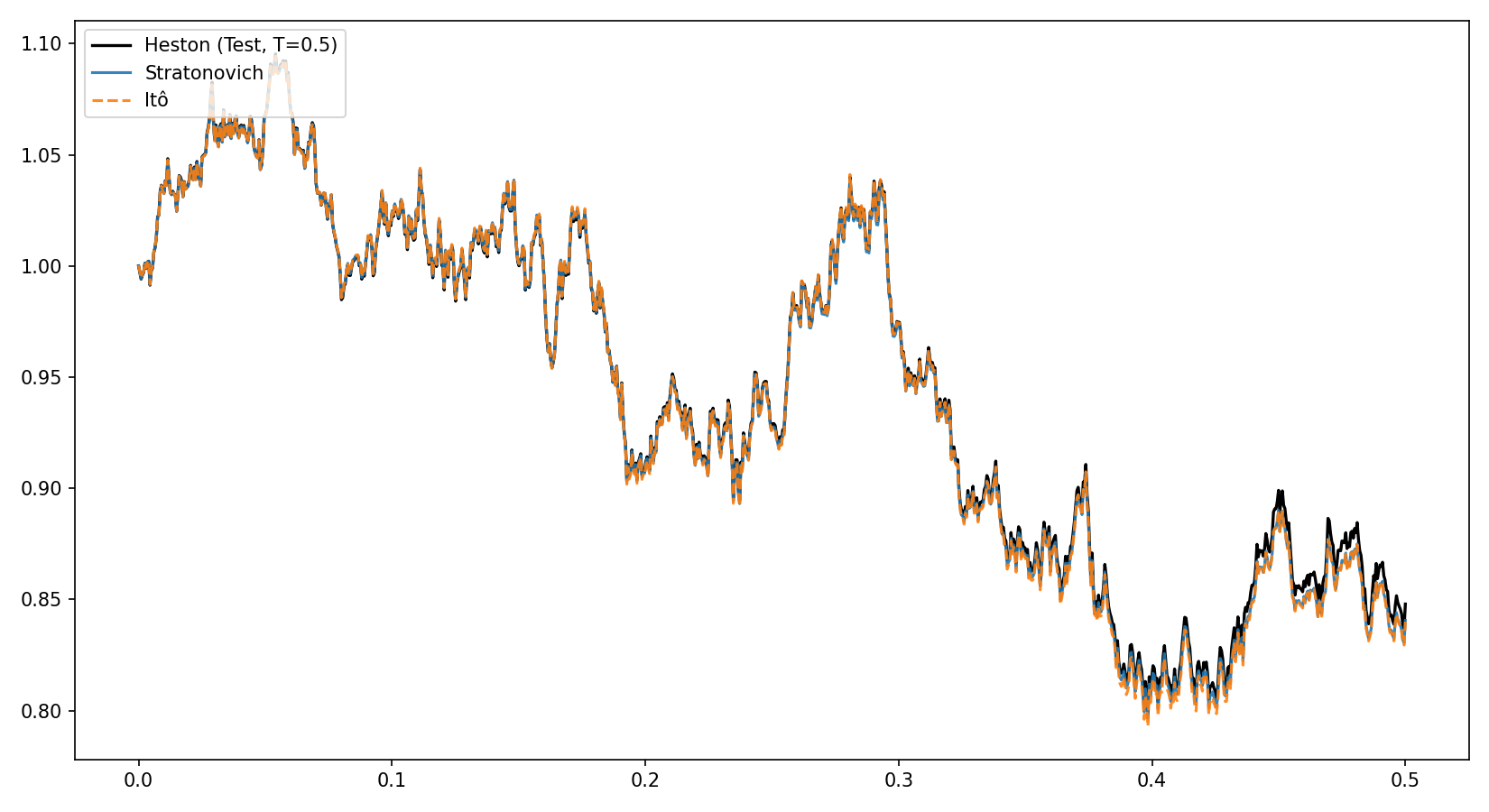}
    \caption{Out-of-sample.}
    \end{subfigure}
    \caption{Regression on the price of the Heston model as defined above.}
    \label{fig: heston_results_calibration}
  \end{figure}

  Figure~\ref{fig: heston_results_calibration} illustrates that the Heston model can be approximated very closely by signature models using the Stratonovich and the It{\^o} signature. We observe an in-sample MSE of order $10^{-8}$ and $10^{-7}$ and out-of-sample MSE of order $10^{-5}$ and $10^{-5}$ for the Stratonovich signature and It{\^o} signature, respectively.

  As stated in Remark~\ref{rem: Brownian sig}, when the model is driven by Brownian motion, that is, when one considers the Brownian signature, it is sufficient to extend the path by time, and both It{\^o} and Stratonovich features achieve comparably small errors (with a slightly better performance for the Stratonovich signature model in our experiments).
\end{example}

\begin{example}[Singular time-changed SDE]
  As a toy example, to mimic volatility dynamics with singular time changes---that appear for instance in hyper-rough Heston-type models with non-absolutely continuous quadratic variation, see e.g.~\cite{Jaber2024}---we consider a one-dimensional SDE driven by a Brownian motion time-changed by a Cantor clock, see~\cite[Example~1 and Section~6]{Aitsahalia2018} and the general framework for SDEs driven by time-changed semimartingales in~\cite{Kobayashi2011}:
  \begin{equation*}
    \dd S_t=\sigma(S_t)\dd W_{C(t)},
  \end{equation*}
  where $C\colon [0,1]\to [0,1]$ is the Cantor function. In our experiment, we choose $\sigma(x)=1+0.3\tanh(x)$. On the time grid, we simulate $C(t)$, generate $W_{C(t)}$ with increments $\Delta W_C\sim \mathcal N(0,\Delta C),$ and use an Euler scheme to simulate $S$. For the Stratonovich signature we take the time-extended path $\widehat X_t=(t,W_{C(t)})\in\mathbb{R}^2$, $t \in [0,1]$; for the It{\^o} signature we additionally include the quadratic variation (QV) and take $\widehat X_t=(t,W_{C(t)},C(t))\in\mathbb{R}^3$, $t \in [0,1]$. We consider the following signature models:
  \begin{align*}
    S^{\text{Strat}}(\ell)_t&=S_0+\sum_{|I|\le N}\ell_I\langle e_I,\hbbX^{\text{Strat}}_t\rangle,\\
    S^{\text{It\^o}}(\ell)_t&=S_0+\sum_{|I|\le N-1}\ell_I\langle e_I\otimes e_{1},\hbbX^{\text{It\^o}}_t\rangle=S_0+\int_0^t\sum_{|I|\le N-1}\ell_I\langle e_I,\hbbX^{\text{It\^o}}_r\rangle \dd W_{C(r)},
  \end{align*}
  where $e_{1}$ corresponds to the component $W_C$ of $\hX$, $\hbbX^{\text{Strat}}$ denotes the Stratonovich signature of $\hX=(\cdot,W_C)$ and $\hbbX^{\text{It\^o}}$ denotes the It{\^o} signature of $\hX=(\cdot,W_C,C)$. We observe that the signature model using the Stratonovich signature cannot be expressed in terms of an It{\^o}-SDE, as was the case in Example~\ref{ex: heston calibration}. This is due to the fact that the Cantor time-change is not absolutely continuous with respect to time. However, using the It{\^o} signature, the corresponding signature model can be written as an It{\^o} SDE driven by the time-changed Brownian motion, which suggests that the It{\^o} signature is the more natural choice when considering such singular time changes.
    
  For our numerical experiment we choose $N=2$ and $S_0=0$ and obtain the following results:
  \begin{figure}[h]
    \centering
    \begin{subfigure}[h]{0.48\linewidth}
    \centering
    \includegraphics[width=\linewidth]{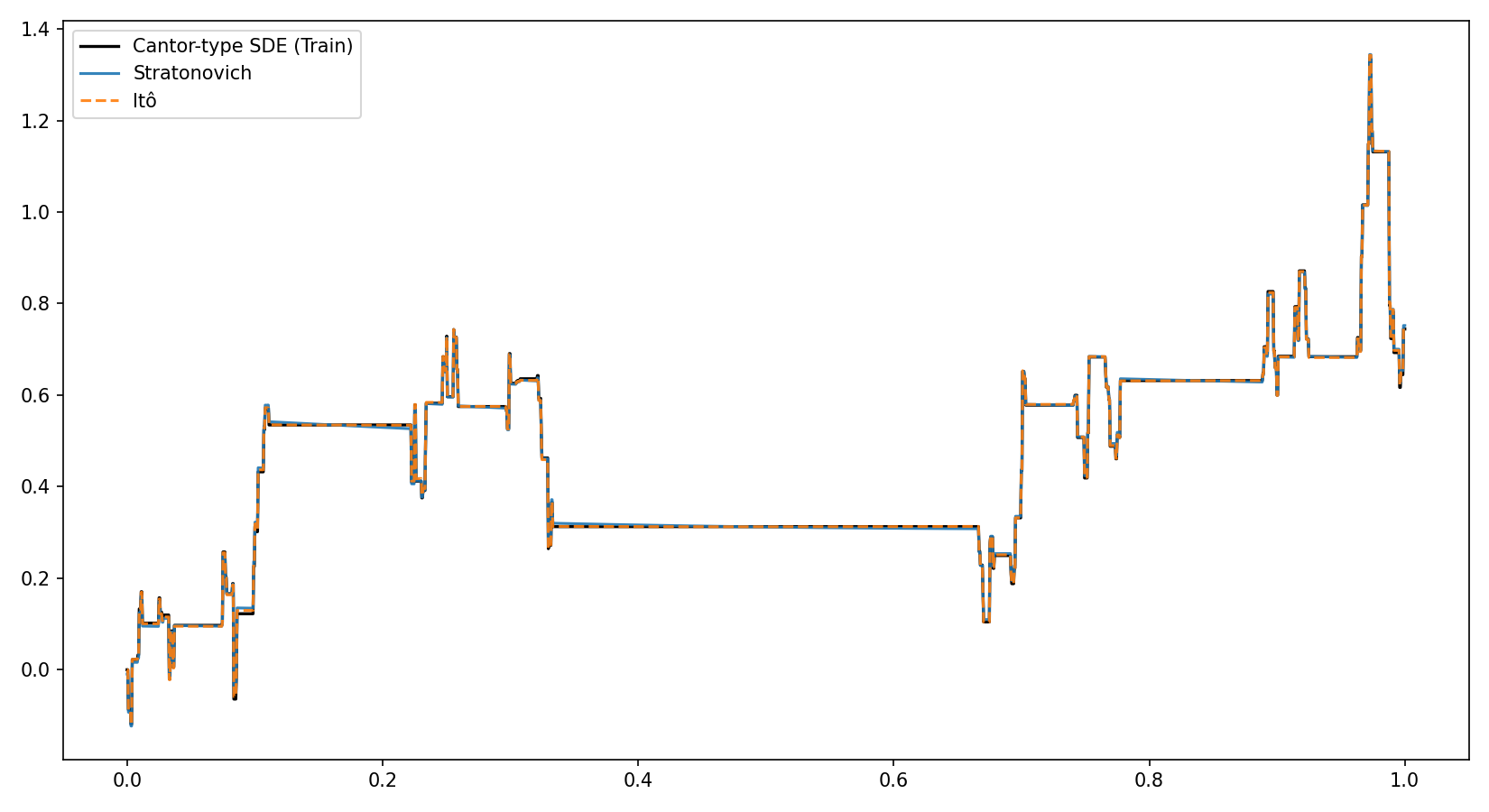}
    \caption{In-sample.}
    \end{subfigure}\hfill
    \begin{subfigure}[h]{0.48\linewidth}
    \centering
    \includegraphics[width=\linewidth]{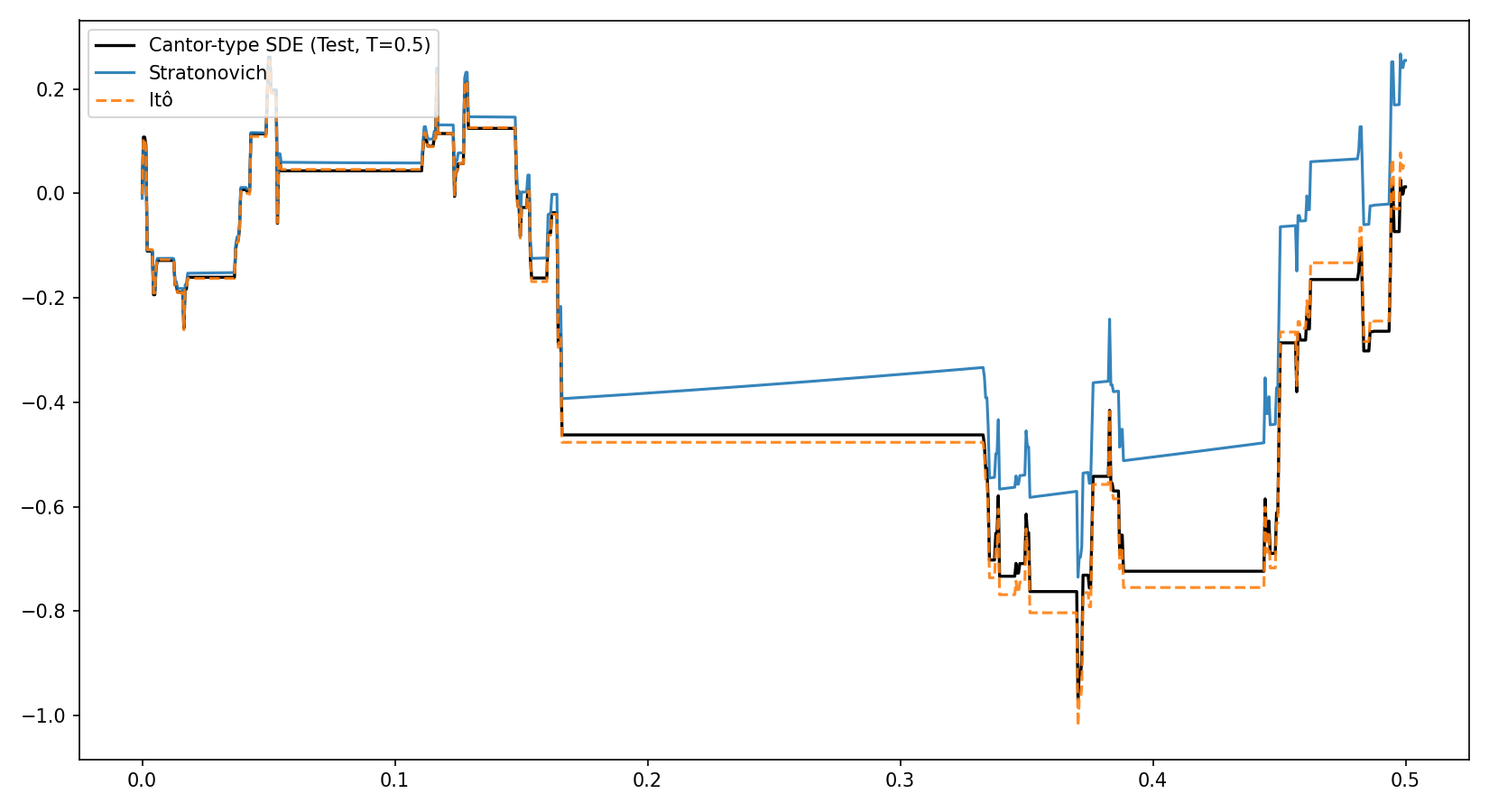}
    \caption{Out-of-sample.}
    \end{subfigure}
    \caption{Regression on the price of a singular time-changed SDE as defined above.}
    \label{fig: Cantor}
  \end{figure}
  
  Figure~\ref{fig: Cantor} illustrates that the signature model using the It{\^o} signature approximates the model more closely than the one using the Stratonovich signature out-of-sample: Both are able to follow the stepped pattern but one can observe that the It{\^o} model results in a path that is more closely aligned with the target path.
     
  The in-sample MSE is of order $10^{-6}$ and $10^{-5}$ and the out-of-sample MSE is of order $10^{-4}$ and $10^{-2}$ when using the It{\^o} signature and the Stratonovich signature, respectively. These findings are consistent with the idea that extending the path by the quadratic variation $[W_C]_t = C(t)$, as we do it for the It{\^o} signature, here actually is advantageous because it contains additional information about the variation of the path that is missing in the Stratonovich signature. This now suggests (and this was the reason we considered this model), and might be worth exploring in future work, that when considering price dynamics that exhibit jumps, signature models based on It{\^o} signatures may also perform better compared to models based on Stratonovich signatures, again because the quadratic variation would contain additional information about the variation of the path (including jumps).
\end{example}

\subsection{Payoff approximation and pricing}

In this subsection we study signature-based approximation and pricing for payoffs that depend explicitly on quadratic (co-)variation, such as options on realized volatility and correlation or covariance swaps and calls.
 
In this setting, since the quadratic variation appears naturally here, we expect the It{\^o} signature to perform particularly well.

This motivated us to compare two set-ups: for single-asset payoffs we consider the Stratonovich signature of the time-extended path $(t,\log S_t^i)$ and the It\^o signature of the path extended by time and quadratic variation $(t,\log S_t^i, [\log S^i]_t)$, $i=1,2$. For correlation and covariance payoffs we analogously consider the Stratonovich signature of $(t,\log S_t^1,\log S_t^2)$ and the It{\^o} signature of the path extended by time and quadratic (co\nobreakdash-)\allowbreak variation $(t,\log S_t^1,\log S_t^2,[\log S^1]_t,[\log S^1,\log S^2]_t,[\log S^2]_t)$.  

The payoffs are defined via log-price increments on an equidistant grid $0=t_0<t_1<\ldots <t_n=T$, where we set $T = 1$ and $n = 252$, thinking of approximately $252$ trading days in one year. We write $X_t^{i}=\log S_t^{i}$, and set $\Delta X_k^{i}=X^{i}_{t_{k+1}} - X^{i}_{t_k}$, $k=0,\ldots,n-1$. The realized variance and realized volatility for asset $i$ are then defined by 
\begin{equation*}
  \mathrm{RVar}^{i}_T:=\sum_{k=0}^{n-1}(\Delta X_k^{i})^2,\qquad \mathrm{RV}^{i}_T:=\sqrt{\sum_{k=0}^{n-1}(\Delta X_k^{i})^2},\qquad i=1,2,
\end{equation*}
and we consider the payoffs
\begin{equation*}
  \mathrm{RVswap}^{i}=\mathrm{RVar}_T^{i}-\mathrm{K}^i_{\mathrm{RVar}},\qquad
  \mathrm{RVcall}^{i}=(\mathrm{RV}_T^{i}-\mathrm{K}^i_{\mathrm{RV}})^{+}, \qquad i=1,2,
\end{equation*}
where the strikes $\mathrm{K}^i_{\mathrm{RVar}}$ and $\mathrm{K}^i_{\mathrm{RV}}$ are determined from the training sample. For the two-asset payoffs, we use
\begin{equation*}
  \mathrm{Cov}_T=\sum_{k=0}^{n-1}\Delta X_k^{1}\Delta X_k^{2},\qquad \mathrm{Corr}_T=\frac{\sum_{k=0}^{n-1}\Delta X_k^{1}\Delta X_k^{2}}{\sqrt{\sum_{k=0}^{n-1}(\Delta X_k^{1})^2}\sqrt{\sum_{k=0}^{n-1}(\Delta X_k^{2})^2}},
\end{equation*}
and consider the payoffs
\begin{align*}
  \mathrm{CovSwap}&=\mathrm{Cov}_T-\mathrm{K_{Cov}},\quad \mathrm{CovCall}=(\mathrm{Cov}_T-\mathrm{K_{Cov}})^{+},\\
  \mathrm{CorrSwap}&=\mathrm{Corr}_T-\mathrm{K_{Corr}},\quad \mathrm{CorrCall}=(\mathrm{Corr}_T-\mathrm{K_{Corr}})^{+},
\end{align*}
where again the strikes $\mathrm{K_{Cov}}$ and $\mathrm{K_{Corr}}$ are determined from the training sample.

Let $\widehat X$ denote the driving path used for signature features (time-extended for Stratonovich; extended by time and quadratic (co-)variation for It{\^o}), and let $\hbbX^{N}$ be its signature truncated at level $N=2$. For a training set of size $N_{\mathrm{train}}=15000$ consisting of i.i.d.~paths $\{\widehat X^{i}\}_{i=1}^{N_{\mathrm{train}}}$ with corresponding payoffs $F(X^{i})$, we fit (separately for each feature map) a linear model on signature features via ridge regression:
\begin{equation*}
  \ell^\ast \;\in\; \arg\min_{\ell\in\mathbb{R}^{d^\ast}}\;
  L_\alpha(\ell),\qquad 
  L_\alpha(\ell):=\frac{1}{N_{\mathrm{train}}}\sum_{i=1}^{N_{\mathrm{train}}}
  \Big(F(X^{(i)})-\langle \ell,\hbbX^{N,i}\rangle\Big)^{2}
  \;+\;\alpha\|\ell\|_2^{2},
\end{equation*}
with $\alpha=10^{-6}$ and $\hbbX^{N,i}$ denoting the truncated signature of the $i$-th path. We then evaluate the model on an independent test set of size $N_{\text{test}}=5000$, consisting of i.i.d.~paths $\{\hX^i\}_{i=1}^{N_{\text{test}}}$ and report the resulting out-of-sample MSEs. Pricing is then performed on an independent sample, generated by Monte Carlo simulation $\{\widehat X^{j}_{\mathrm{MC}}\}_{j=1}^{N_{\mathrm{MC}}}$ of size $N_{\mathrm{MC}}=25000$, via
\begin{equation*}
  \widehat P = \frac{1}{N_{\mathrm{MC}}}\sum_{j=1}^{N_{\mathrm{MC}}}
  \langle \ell^\ast, \hbbX^{N,j}_{\mathrm{MC}}\rangle,
\end{equation*}
and we compare $\widehat P$ to the Monte Carlo price $\frac{1}{N_{\mathrm{MC}}} \sum_{j=1}^{N_{\mathrm{MC}}} F(X^{j}_{\mathrm{MC}})$, which we take as a benchmark, and also include its $95\%$ confidence interval.

\begin{example}
  We consider a two-asset Heston model given by
  \begin{align*}
    \dd S_t^{i}&=\mu^iS_t^i\dd t+S_t^i\sqrt{V_t^i}\dd B_t^i,\\
    \dd V_t^i&=\kappa^i(\theta^i-V_t^i)\dd t+\sigma^i \sqrt{V_t^i}\dd W_t^i, \quad i=1,2,
  \end{align*}
  with $[B^1,B^2]_t=0.3t$, $[W^1,W^2]_t=0.5t$, $[B^1,W^1]_t=-0.6t$ and $[B^2,W^2]_t=-0.5t$. Moreover, we set 
  \begin{align*}    
    &\{S_0^1,S_0^2,V_0^1,V_0^2,\mu^1,\mu^2,\kappa^1,\kappa^2,\theta^1,\theta^2,\sigma^1,\sigma^2\}\\
    &=\{100,80,0.04,0.09,0.0,0.0,2.0,1.8,0.04,0.09,0.5,0.6\}.
  \end{align*}
  \begin{figure}[h]
    \centering
    \begin{subfigure}[h]{0.48\linewidth}
    \centering
    \includegraphics[width=\linewidth]{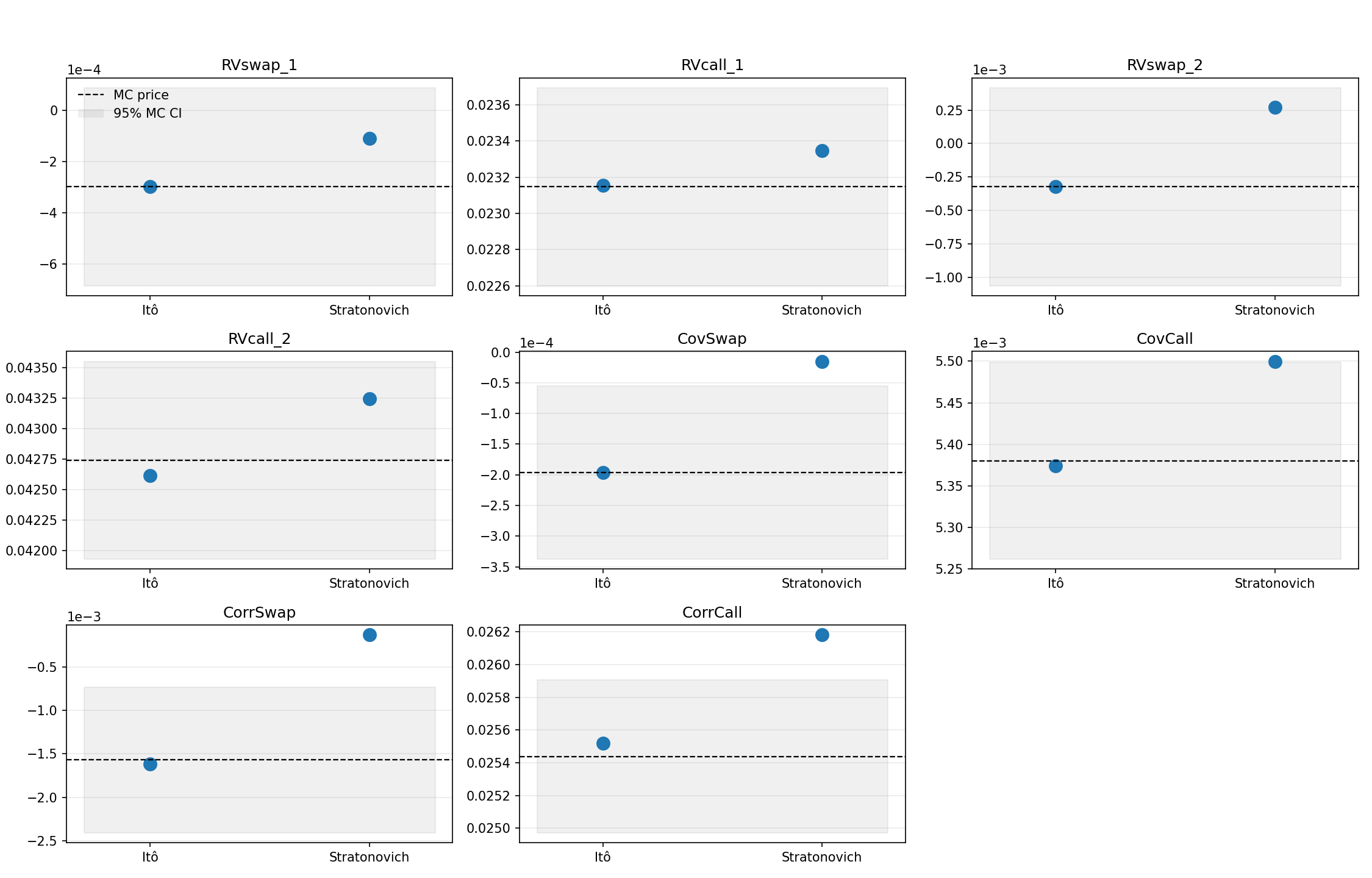}
    \caption{Prices from signature-regressed payoffs and MC price.}
    \label{fig: price_heston}
    \end{subfigure}\hfill
    \begin{subfigure}[h]{0.48\linewidth}
    \centering
    \includegraphics[width=\linewidth]{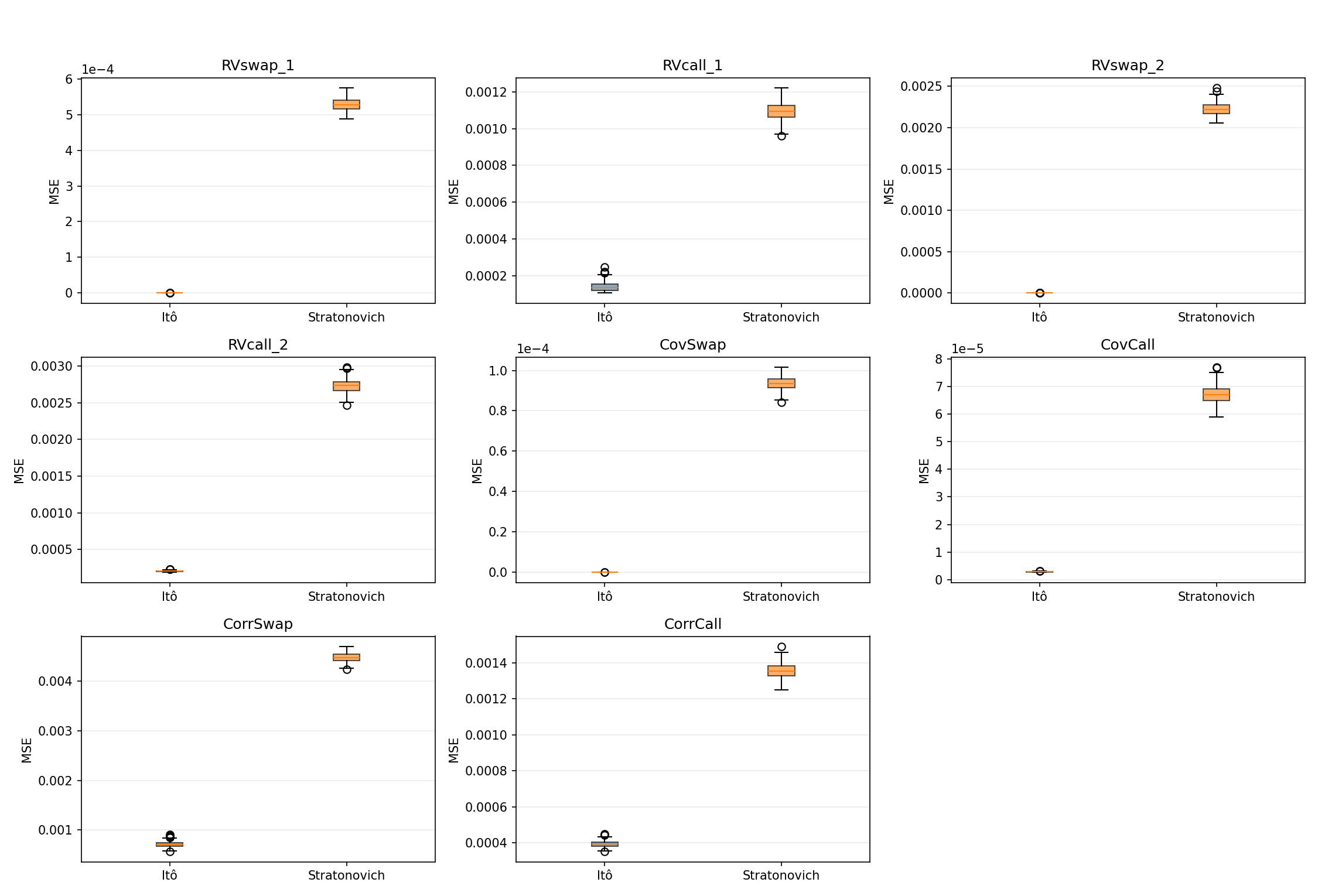}
    \caption{Boxplots of out-of-sample MSEs.}
    \label{fig: mse_heston}
    \end{subfigure}
    \caption{Signature-based payoff regression and pricing under the two-asset Heston model as defined above.}
  \end{figure}
  
  Figure~\ref{fig: mse_heston} shows that, for any payoff, using the It{\^o} signature results in smaller out-of-sample MSEs than using the Stratonovich signature. The boxplots also indicate both a smaller bias and a smaller out-of-sample variance. The prices, see Figure~\ref{fig: price_heston}, estimated via the It{\^o} signature, essentially coincide with the Monte Carlo price, and lie within the $95\%$ confidence interval. The prices, estimated via the Stratonovich signature, are systematically biased---especially for the two-asset payoffs (CovSwap/CovCall and CorrSwap/CorrCall), where some prices also lie outside the confidence interval. This underlines the advantage of extending the path by quadratic variation, as we do it for the It{\^o} features, because this allows to directly access $[\log S^1]$, $[\log S^2]$, $[\log S^1,\log S^2]$ in the regression, which are exactly the statistics the payoffs we consider depend on. Using the Stratonovich signature of the time-extended path, this is not the case and the information has to be inferred from the iterated integrals of order up to $2$.
\end{example}

\begin{example}
  We consider a two-asset singular time-changed SDE (with time-change given by the Cantor clock), 
  \begin{equation*}
    \dd S_t^i=\sigma(S_t^i)\dd W_{C(t)}^i,\quad i=1,2,
  \end{equation*}
  with $[W_C^1,W_C^2]_t=\rho\, C(t)$, $\rho=0.6$, $\sigma(S_t^i)=\nu_i S_t^i$, $\nu_i\in\R$. In our experiments, we set
  \begin{equation*}
    S^{1}_0=100,\qquad S^{2}_0=80, \qquad 
    \nu_1=0.20, \qquad \nu_2=0.30.
  \end{equation*}
  \begin{figure}[h!]
    \centering
    \begin{subfigure}[h]{0.48\linewidth}
    \centering
    \includegraphics[width=\linewidth]{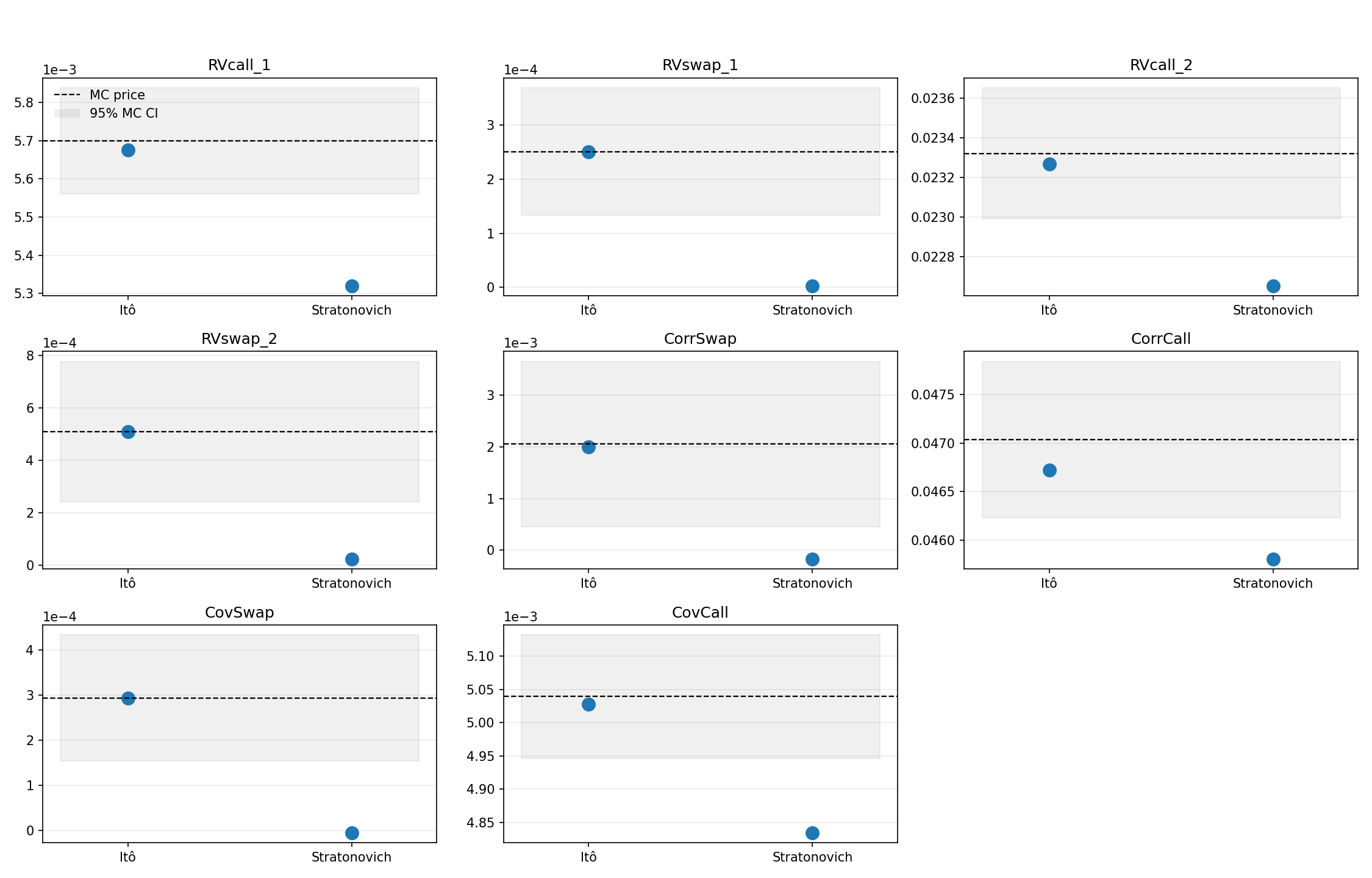}
    \caption{Prices from signature-regressed payoffs and MC price.}
    \label{fig: price_cantor}
    \end{subfigure}\hfill
    \begin{subfigure}[h]{0.48\linewidth}
    \centering
    \includegraphics[width=\linewidth]{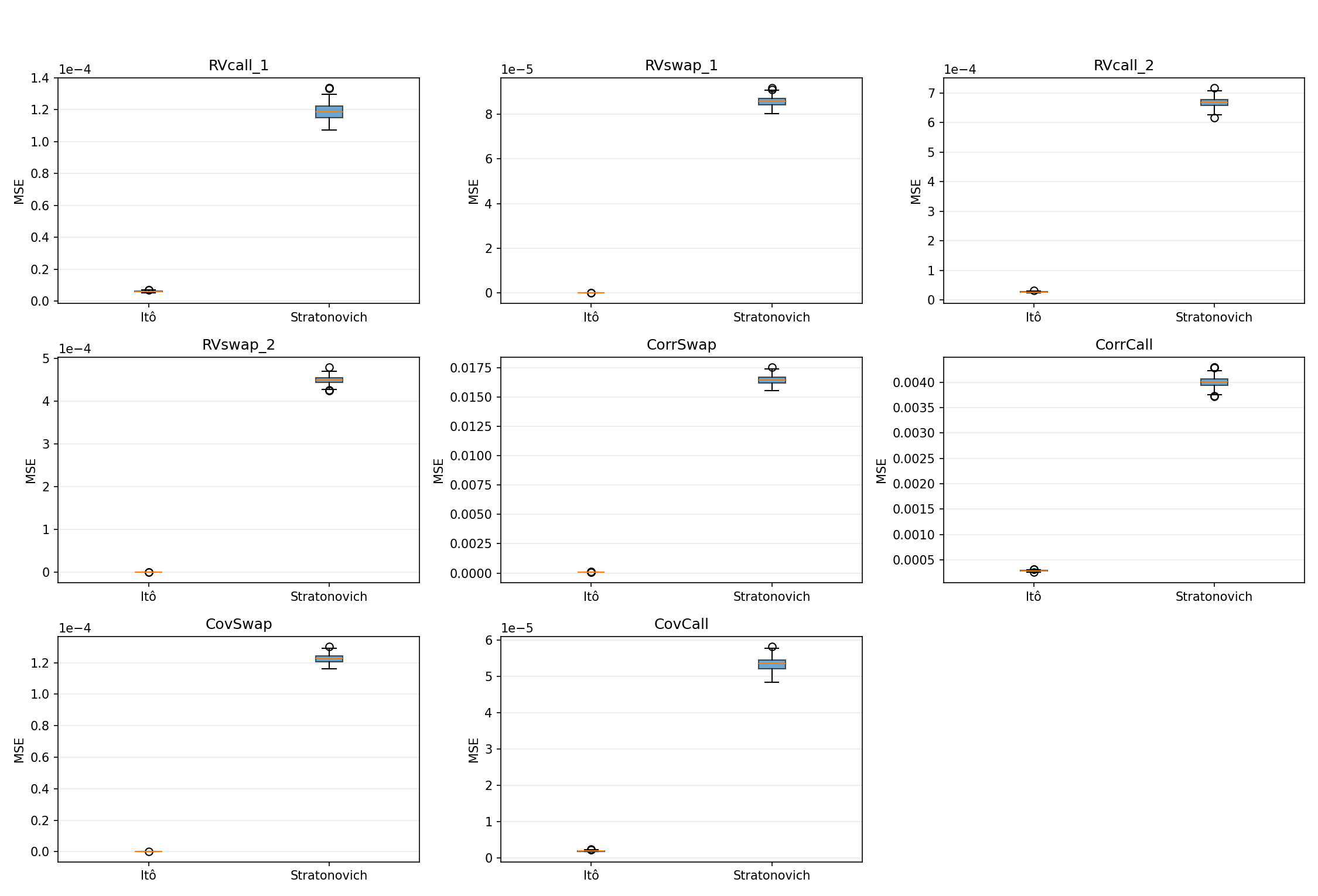}
    \caption{Boxplots of out-of-sample MSEs.}
    \label{fig: mse_cantor}
    \end{subfigure}
    \caption{Signature-based payoff regression and pricing under the two-asset singular-time changed SDE described above.}
  \end{figure}
  
  In this model, the differences between using It{\^o} signatures and Stratonovich signatures become even more clear. When using the It\^o signature, the out-of-sample MSEs are minimal, and the prices lie within the $95\%$ MC confidence interval, for any payoff considered. When using the Stratonovich signature, however, the out-of-sample MSEs are significantly larger, see Figure~\ref{fig: mse_cantor}, and  the prices do not lie within the confidence interval for any payoff considered, see Figure~\ref{fig: price_cantor}.
\end{example}

\appendix
\section{Proof of Theorem~\ref{thm: UAT geometric}}\label{appendix: proof of UAT geometric}

In this appendix, for completeness, we present the proof of the universal approximation theorem using time-extended weakly geometric rough paths (Theorem~\ref{thm: UAT geometric}).

\begin{proof}[Proof of Theorem~\ref{thm: UAT geometric}]
  The result follows by an application of the Stone--Weierstrass theorem to the set
  \begin{equation*}
	\mathcal A := \spn \{K \ni \hbbX^{o, 2} \mapsto \langle e_I,\hbbX^o_T \rangle \in \R: I \in \{0,\ldots,d\}^N, N \in \N_0\}.
  \end{equation*}
    
  Therefore, we have to show that $\mathcal A$
  \begin{enumerate}
    \item[(i)] is a vector subspace of $C(K;\R)$,
	\item[(ii)] is a subalgebra and contains a non-zero constant function, and
	\item[(iii)] separates points.
  \end{enumerate}
  
  \emph{(i):} By~\cite[Corollary 9.11]{Friz2010}, the map $\hbbX^{o, 2}\mapsto \langle e_I,\hbbX^o_T\rangle$ is continuous on bounded sets for every multi-index $I$ with respect to $d_{p\textup{-var}} := \|\cdot \, ; \cdot\|_p$. More precisely, the map
  \begin{equation*}
	(K,d_{p\textup{-var}}) \ni \hbbX^{o, 2} \mapsto \hbbX^{o, N} \in (C_o^{p\textup{-var}}([0,T];G^N(\R^{1+d})),d_{p\textup{-var}}),
  \end{equation*}
  is continuous on $K$ with respect to $d_{p\textup{-var}}$, for every $N \geq 3$. Moreover, the evaluation map
  \begin{equation*}
	(C_o^{p\textup{-var}}([0,T];G^N(\R^{1+d})),d_{p\textup{-var}}) \ni \hbbX^{o, N} \mapsto \hbbX^{o, N}_T\in (G^N(\R^{1+d}),\rho)
  \end{equation*}
  is continuous, where $\rho$ denotes the metric induced by the norm on $T_1^N(\R^{1+d})$. Here, we used that we can equip $G^N(\R^{1+d})$ with the metric $\rho$, see e.g.~\cite[Remark~7.31]{Friz2010}. This yields that the map
  \begin{equation*}
	(K,d_{p\textup{-var}})\ni\hbbX^{o, 2} \mapsto \hbbX^{o, N}_T\in (G^N(\R^{1+d}),\rho)
  \end{equation*}
  is continuous. Since $\hbbX^{o, N}_T \mapsto \langle e_I, \hbbX^o_T \rangle$ is continuous for any multi-index $I$, we can thus conclude that the map
  \begin{equation*}
	(K,d_{p\textup{-var}}) \ni \hbbX^{o, 2} \mapsto \langle e_I, \hbbX_T^o \rangle \in \R
  \end{equation*}
  is continuous with respect to $d_{p\textup{-var}}$.
	
  \emph{(ii):} Since $\hbbX^o_T$ is a group-like element, i.e.,~$\hbbX^o_T \in G((\R^{1+d}))$, the shuffle property holds, and thus $\mathcal A$ is a subalgebra. Moreover, since $\langle e_\emptyset, \hbbX^o_T \rangle = 1$, it contains a non-zero constant function.
	
  \emph{(iii):} For the point separation, let us consider $\hbbX^{o, 2}$, $\hbbY^{o, 2}\in K$ with $\hbbX^{o, 2} \neq \hbbY^{o, 2}$. We show that there exists a $k \in \N$, $I \in \{0,\ldots,d\}^N$, $N \in \{0,1,2\}$, such that
  \begin{equation*}
	\langle (e_I \shuffle e_0^{\otimes k}) \otimes e_0, \hbbX^o_T \rangle \neq \langle (e_I \shuffle e_0^{\otimes k}) \otimes e_0, \hbbY^o_T \rangle.
  \end{equation*}
  We proceed with a proof by contradiction. Assume that for all $k \in \N$, $I \in \{0, \ldots, d\}^N$, $N \in \{0,1,2\}$, we have
  \begin{equation*}
	\langle (e_I \shuffle e_0^{\otimes k}) \otimes e_0, \hbbX^o_T \rangle = \langle (e_I \shuffle e_0^{\otimes k}) \otimes e_0, \hbbY^o_T\rangle.
  \end{equation*}
  We first note that 
  \begin{equation*}
	\langle e_0^{\otimes k}, \hbbX^o_t \rangle = \frac{t^k}{k!}.
  \end{equation*}
  Moreover, using the shuffle property, we have by e.g.~\cite[Proposition~C.5]{Cuchiero2023b}, that
  \begin{equation*}
	\langle (e_I \shuffle e_0^{\otimes k}) \otimes e_0, \hbbX^o_T \rangle 
	= \int_0^T \langle e_I, \hbbX^o_t \rangle \langle e_0^{\otimes k},\hbbX^o_t \rangle \dd t
	= \int_0^T \langle e_I, \hbbX^o_t \rangle \frac{t^k}{k!} \dd t.
  \end{equation*}
  Similarly, we have
  \begin{equation*}
	\langle (e_I \shuffle e_0^{\otimes k}) \otimes e_0, \hbbY^o_T \rangle = \int_0^T \langle e_I, \hbbY^o_t \rangle \frac{t^k}{k!} \dd t.
  \end{equation*}
  By \cite[Corollary~4.24]{Brezis2011} and because $\langle e_I, \hbbX^o_0 \rangle = \langle e_I, \hbbY^o_0 \rangle = 0$, it then follows that 
  \begin{equation*}
	\langle e_I, \hbbX^o_t \rangle = \langle e_I, \hbbY^o_t \rangle,
  \end{equation*}
  for all $t \in [0,T]$ and all $I \in \{0, \ldots, d\}^N$, $N \in \{0,1,2\}$. However, this contradicts the assumption that $\hbbX^{o, 2}$, $\hbbY^{o, 2}$ are distinct. Thus, we can conclude that $\mathcal{A}$ is point-separating.
\end{proof}

\section{Proof of Lemma~\ref{lemma: stability of gamma RIE}}\label{appendix: proof of lemma}

In this appendix we present the proof of the auxiliary Lemma~\ref{lemma: stability of gamma RIE}.

\begin{proof}[Proof of Lemma~\ref{lemma: stability of gamma RIE}]
  For $\gamma \neq \frac{1}{2}$, the statement follows from Lemma~\ref{lemma: RIE eq to gamma RIE} and~\cite[Proposition~2.12]{Allan2023c}.
	
  Suppose that $\gamma = \frac{1}{2}$. We need to verify that the integral 
  \begin{equation*}
    \int_0^t \hX_r \otimes \d^{\frac{1}{2},\pi^n} \hX_r = \int_0^t X_r \otimes \d^{\frac{1}{2},\pi^n} X_r + \int_0^t X_r \otimes \d^{\frac{1}{2},\pi^n} \phi_r + \int_0^t \phi_r \otimes \d^{\frac{1}{2},\pi^n} X_r + \int_0^t \phi_r \otimes \d^{\frac{1}{2},\pi^n} \phi_r,
  \end{equation*}
  converges as $n \to \infty$ to the limit
  \begin{equation*}
	\int_0^t \hX_r \otimes \d^{\frac{1}{2},\pi} X_r = \int_0^t X_r \otimes \d^{\frac{1}{2},\pi} X_r + \int_0^t X_r \otimes \d^{\frac{1}{2},\pi} \phi_r + \int_0^t \phi_r \otimes \d^{\frac{1}{2},\pi} X_r + \int_0^t \phi_r \otimes \d^{\frac{1}{2},\pi} \phi_r,
  \end{equation*}
  uniformly in $t \in [0,T]$, where the latter three integrals are given as Young integrals.
	
  Since $X$ satisfies Property $1/2$-\textup{(RIE)}, we have that
  \begin{equation*}
	\bigg \|\int_0^\cdot X_r \otimes \d^{\frac{1}{2},\pi^n} X_r - \int_0^\cdot X_r \otimes \d^{\frac{1}{2},\pi} X_r \bigg \|_\infty \longrightarrow 0 \qquad \text{as} \qquad n \to \infty.
  \end{equation*}
  Define $\bar{X}^n$ and $\bar{\phi}^n$ as the piecewise linear interpolation of $X$ and $\phi$, respectively, along $\pi = (\pi^n)_{n \in \N}$. Then, it holds for any $t \in [0,T]$ that
  \begin{equation*}
	\int_0^t X_r \otimes \d^{\frac{1}{2},\pi^n} \phi_r = \sum_{k=0}^{N_n-1} (X_{t^n_k} + \frac{1}{2} X_{t^n_k, t^n_{k+1}}) \otimes \phi_{t^n_k \wedge t, t^n_{k+1} \wedge t} = \int_0^t \bar{X}^n_r \otimes \d \phi_r.
  \end{equation*}
  Let $p' > p$ such that $1/p' + 1/q > 1$. By the standard estimate for Young integrals---see e.g.~\cite[Proposition~2.4]{Friz2018}---we have for all $t \in [0,T]$, that
  \begin{equation*}
	\bigg \lvert \int_0^t X_r \otimes \d^{\frac{1}{2},\pi^n} \phi_r - \int_0^t X_r \otimes \d^{\frac{1}{2},\pi} \phi_r \bigg \rvert \lesssim \|\bar{X}^n - X\|_{p'} \|\phi\|_q.
  \end{equation*}
  It follows by interpolation---see e.g.~\cite[Proposition~5.5]{Friz2010}---that
  \begin{equation*}
	\|\bar{X}^n - X\|_{p'} \leq \|\bar{X}^n - X\|_\infty^{1 - \frac{p}{p'}} \|\bar{X}^n - X\|_p^{\frac{p}{p'}}.
  \end{equation*}
  Since $\bar{X}^n$ converges uniformly to $X$ as $n \to \infty$, and $\sup_{n \in \N} \|\bar{X}^n\|_p < \infty$, we deduce that
  \begin{equation*}
	\bigg\|\int_0^\cdot X_r \otimes \d^{\frac{1}{2},\pi^n} \phi_r - \int_0^\cdot X_r \otimes \d^{\frac{1}{2},\pi} \phi_r \bigg \|_\infty \, \longrightarrow \, 0 \qquad \text{as} \quad n \, \longrightarrow \, \infty.
  \end{equation*}
  Similarly, for each $t \in [0,T]$, it holds that
  \begin{equation*}
	\bigg|\int_0^t \phi_r \otimes \d^{\frac{1}{2},\pi^n} X_r - \int_0^t \phi_r \otimes \d^{\frac{1}{2},\pi} X_r \bigg| \lesssim \|\bar{\phi}^n - \phi\|_q \|X\|_p,
  \end{equation*}
  and
  \begin{equation*}
	\bigg|\int_0^t \phi_r \otimes \d^{\frac{1}{2},\pi^n} \phi_r - \int_0^t \phi_r \otimes \d^{\frac{1}{2},\pi} \phi_r \bigg| \lesssim \|\bar{\phi}^n - \phi\|_q \|\phi\|_q,
  \end{equation*}
  and, since $\|\bar{\phi}^n - \phi\|_q \to 0$ as $n \to \infty$, we infer the required convergence.
	
  We further aim to find a control function $c$ such that
  \begin{equation}\label{eq: condition (iii) for perturbed path}
	\sup_{(s,t) \in \Delta_T} \frac{|\hX_{s,t}|^p}{c(s,t)} + \sup_{n \in \N} \, \sup_{0 \leq k < \ell \leq N_n} \frac{|\int_{t^n_k}^{t^n_\ell} \hX_u \otimes \d^{\frac{1}{2},\pi^n} \hX_u - \hX_{t^n_k} \otimes \hX_{t^n_k,t^n_{k+1}}|^{\p}}{c(t_k^n,t_\ell^n)} \lesssim 1,
  \end{equation}
  where
  \begin{align*}
	&\int_{t^n_k}^{t^n_\ell} \hX_u \otimes \d^{\frac{1}{2},\pi^n} \hX_u - \hX_{t^n_k} \otimes \hX_{t^n_k,t^n_{k+1}} = \int_{t^n_k}^{t^n_\ell} \hX_{t^n_k,u} \otimes \d^{\frac{1}{2},\pi^n} \hX_u \\
	&\quad = \int_{t^n_k}^{t^n_\ell} X_{t^n_k,u} \otimes \d^{\frac{1}{2},\pi^n} X_u + \int_{t^n_k}^{t^n_\ell} X_{t^n_k,u} \otimes \d^{\frac{1}{2},\pi^n} \phi_u \\
	&\qquad + \int_{t^n_k}^{t^n_\ell} \phi_{t^n_k,u} \otimes \d^{\frac{1}{2},\pi^n} X_u + \int_{t^n_k}^{t^n_\ell} \phi_{t^n_k,u} \otimes \d^{\frac{1}{2},\pi^n} \phi_u.
  \end{align*}
  Let $c_X$ be the control function with respect to which $X$ satisfies Property $\gamma$-\textup{(RIE)}, and define moreover the control function $c_\phi$, given by $c_{\phi}(s,t) = \|\phi\|_{q,[s,t]}^q$ for $(s,t) \in \Delta_T$.
	
  We have from Property $1/2$-\textup{(RIE)} that
  \begin{equation*}
	\sup_{(s,t) \in \Delta_T} \frac{|\hX_{s,t}|^p}{c_X(s,t) + c_\phi(s,t)} \lesssim \sup_{(s,t) \in \Delta_T} \frac{|X_{s,t}|^p}{c_X(s,t)} + \sup_{(s,t) \in \Delta_T} \frac{|\phi_{s,t}|^p}{c_\phi(s,t)} \lesssim 1,
  \end{equation*}
  and that
  \begin{equation*}  
	\sup_{n \in \N} \, \sup_{0 \leq k < \ell \leq N_n} \frac{|\int_{t_k^n}^{t_\ell^n} X_u \otimes \d^{\frac{1}{2},\pi^n} X_u - X_{t^n_k} \otimes X_{t^n_k,t^n_{k+1}}|^{\p}}{c_X(t_k^n,t_\ell^n)} \lesssim 1.
  \end{equation*}
  By the standard estimate for Young integrals (see e.g.~\cite[Proposition~2.4]{Friz2018}), for every $n \in \N$ and $0 \leq k < \ell \leq N_n$, we have
  \begin{align*}
	\bigg|\int_{t_k^n}^{t_\ell^n} \bar{X}_{t_k^n,u}^n \otimes \d \phi_u \bigg|^{\p} &\lesssim \|\bar{X}^n\|_{p,[t_k^n,t_\ell^n]}^{\p} \|\phi\|_{q,[t_k^n,t_\ell^n]}^{\p}\\
	&\leq \|X\|_{p,[t_k^n,t_\ell^n]}^{\p} \|\phi\|_{q,[t_k^n,t_\ell^n]}^{\p} \leq c_X(t_k^n,t_\ell^n)^{\frac{1}{2}} c_\phi(t_k^n,t_\ell^n)^{\frac{p}{2q}},
  \end{align*}
  and we can similarly obtain
  \begin{equation*}
	\bigg|\int_{t_k^n}^{t_\ell^n} \bar{\phi}^n_{t_k^n,u} \otimes \d X_u \bigg|^{\p} \lesssim c_X(t_k^n,t_\ell^n)^{\frac{1}{2}} c_\phi(t_k^n,t_\ell^n)^{\frac{p}{2q}}
  \end{equation*}
  and
  \begin{equation*}
	\bigg|\int_{t_k^n}^{t_\ell^n} \bar{\phi}_{t_k^n,u}^n \otimes \d \phi_u\bigg|^{\p} \lesssim c_\phi(t_k^n,t_\ell^n)^{\frac{p}{q}}.
  \end{equation*}
  Since $p \in (2,3)$ and $q \in [1,2)$, we have that $1/2 + p/2q > 1$ and $p/q > 1$, and it follows that the maps $(s,t) \mapsto c_X(s,t)^{\frac{1}{2}} c_\phi(s,t)^{\frac{p}{2q}}$ and $(s,t) \mapsto c_\phi(s,t)^{\frac{p}{q}}$ are superadditive and thus control functions. We deduce that \eqref{eq: condition (iii) for perturbed path} holds with a control function $c$ of the form
  \begin{equation*}  
	c(s,t) = C\Big(c_X(s,t) + c_\phi(s,t) + c_X(s,t)^{\frac{1}{2}} c_\phi(s,t)^{\frac{p}{2q}} + c_\phi(s,t)^{\frac{p}{q}}\Big), \qquad (s,t) \in \Delta_T,
  \end{equation*}
  where $C > 0$ is a suitable constant which depends only on $p$ and $q$.
\end{proof}

\section{On Lyons' extension theorem}\label{sec: finite p-variation}

We prove that Lyons' extension of a rough path (that is not necessarily weakly geometric) for $p \in (2,3)$, see e.g.~\cite[Theorem~3.7]{Lyons2007}, coincides with the collection of iterated integrals defined via rough integration with respect to controlled paths.

\begin{proposition}\label{prop: Lyons extension coincides with iterated integrals}
  Let $p \in (2,3)$. Let $\bX = (X,\X^{(2)})$ be a rough path such that
  \begin{equation*}
    |X_{s,t}| \lesssim c(s,t)^{\frac 1 p}, \qquad |\X^{(2)}_{s,t}| \lesssim c(s,t)^{\frac 2 p}, \qquad (s,t) \in \Delta_T,
  \end{equation*}
  for some control function $c$. For $(s,t) \in \Delta_T$, $N > 2$, consider the iterated integral of order $N$, i.e., $\X^{(N)}_{s,t} := \int_s^t (\X^{(N-1)}_{s,\cdot})_r \otimes \d \bX$ as a rough integral of $r \mapsto (\X^{(N-1)}_{s,\cdot})_r = \X^{(N-1)}_{s,r}$, which is a controlled path w.r.t.~$X$ on $[s,t]$, with respect to $\bX$. (For the definition of the rough integral, we particularly refer to Remark~\ref{rem: tensored controlled path}.) Then,
  \begin{equation*}
    (s,t) \mapsto (1,X_{s,t},\X^{(2)}_{s,t},\dots,\X^{(N)}_{s,t}) \in T^N(\R^d)
  \end{equation*}
  satisfies $|\X^{(N)}_{s,t}| \lesssim c(s,t)^{\frac N p}$, $(s,t) \in \Delta_T$, and coincides with Lyons' extension of $(1,X_{s,t},\X^{(2)}_{s,t})$ to $T^N(\R^d)$, as given in e.g.~\cite[Theorem~3.7]{Lyons2007}, for any $N > 2$.
\end{proposition}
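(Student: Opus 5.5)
We argue by induction on $N$, proving simultaneously three things for $(s,t)\mapsto\X^{(N)}_{s,t}$: (a) the rough integral defining it is well-defined; (b) the bound $|\X^{(N)}_{s,t}|\lesssim c(s,t)^{N/p}$ holds; (c) Chen's relation $\X^{(N)}_{s,t}=\sum_{i+j=N}\X^{(i)}_{s,u}\otimes\X^{(j)}_{u,t}$ holds for $s\le u\le t$. Once (b) and (c) are established for all levels up to $N$, the truncated object $(1,X_{s,t},\dots,\X^{(N)}_{s,t})$ is a multiplicative functional of finite $p$-variation controlled by $c$ that extends $(1,X,\X^{(2)})$. The uniqueness part of Lyons' extension theorem (\cite[Theorem~3.7]{Lyons2007}) then identifies it with Lyons' extension, since for $p\in(2,3)$ such an extension is determined by its first two levels.

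For (a), fix $s$ and note that $r\mapsto\X^{(N-1)}_{s,r}$ is controlled by $X$ on $[s,t]$ with Gubinelli derivative $\X^{(N-2)}_{s,r}\otimes(\cdot)$. Chen's relation at levels $N-1$ and $N-2$ (induction hypothesis) gives
\begin{equation*}
  \X^{(N-1)}_{s,v}-\X^{(N-1)}_{s,u}=\X^{(N-2)}_{s,u}\otimes X_{u,v}+R_{u,v},\qquad R_{u,v}=\sum_{j\ge2}\X^{(N-1-j)}_{s,u}\otimes\X^{(j)}_{u,v}.
\end{equation*}
By (b) at lower levels, $|R_{u,v}|\lesssim c(u,v)^{2/p}$, uniformly in $u$ bounded by powers of $c(s,t)$. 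So $(\X^{(N-1)}_{s,\cdot},\X^{(N-2)}_{s,\cdot})\in\mathscr C^p_X$, and Lemma~\ref{lemma: integration against controlled path} together with Remark~\ref{rem: tensored controlled path} defines the integral as the limit of compensated Riemann sums $\sum_{[u,v]}\X^{(N-1)}_{s,u}\otimes X_{u,v}+\X^{(N-2)}_{s,u}\otimes\X^{(2)}_{u,v}$.

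For (b) and (c), observe that the local germ $\Xi_{u,v}:=\X^{(N-1)}_{s,u}\otimes X_{u,v}+\X^{(N-2)}_{s,u}\otimes\X^{(2)}_{u,v}$ is exactly the level-$N$ component of $\X^{N-1}_{s,u}\otimes\X^{N-1}_{u,v}$, after discarding the terms $\X^{(N-j)}_{s,u}\otimes\X^{(j)}_{u,v}$ with $j\ge3$ that are unavailable at this stage. Chen's relation then makes the germ additive up to $\delta\Xi_{u,m,v}$, and this defect is a sum of terms $\X^{(a)}_{s,u}\otimes\X^{(b)}_{u,m}\otimes\X^{(c)}_{m,v}$ with $b+c\ge3$, each bounded by $c(u,v)^{3/p}$. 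Since $3/p>1$, the sewing lemma gives the integral together with the estimate $|\X^{(N)}_{s,t}-\Xi_{s,t}|\lesssim c(s,t)^{3/p}$. However, $\Xi_{s,t}=0$ here because $\X^{(k)}_{s,s}=0$ for $k\ge1$, so this only yields $c^{3/p}$, not $c^{N/p}$.

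Obtaining the sharp exponent $N/p$ is the main obstacle. I would use the neoclassical argument of Lyons, rather than the rough sewing bound, applied to the almost multiplicative functional
\begin{equation*}
  \widetilde{\X}^N_{u,v}:=(1,X_{u,v},\dots,\X^{(N-1)}_{u,v},0).
\end{equation*}
Its multiplicativity defect at level $N$ is bounded by $c^{\theta}$ with $\theta=N/p>1$. The successive-point-removal argument from the proof of \cite[Theorem~3.7]{Lyons2007} then shows that the partition products converge to a multiplicative limit whose level-$N$ part is bounded by $C\,c(s,t)^{N/p}$, with the factor $\beta$ depending on $N/p$. It remains to identify this limit with our $\X^{(N)}$. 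The partition product expands to $\sum_{[u,v]}\sum_{j\ge1}\X^{(N-j)}_{s,u}\otimes\X^{(j)}_{u,v}$ plus vanishing terms. Here the terms with $j\ge3$ sum to zero in the limit, since they are bounded by $\sum c(u,v)^{3/p}\to0$. The $j=1,2$ terms are precisely the Riemann sums from (a). Hence both limits coincide, which yields (b), and Chen's relation (c) is inherited from multiplicativity of the limit. This identification, and the verification that the defect bound holds uniformly in $s$, is the step that needs the most care.
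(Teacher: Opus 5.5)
Your proof is correct, but it takes a different route from the paper, and the ``main obstacle'' you identify comes from a lossy estimate rather than from the problem itself. In your sewing step the defect terms are $\X^{(a)}_{s,u}\otimes\X^{(b)}_{u,m}\otimes\X^{(c)}_{m,v}$ with $a+b+c=N$ and $b+c\ge3$. You bounded them by $c(u,v)^{3/p}$, which discards the smallness of $\X^{(a)}_{s,u}$.

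The paper keeps $|\X^{(a)}_{s,u}|\lesssim c(s,u)^{a/p}\le c(s,t)^{a/p}$, which is a constant on the fixed sewing interval $[s,t]$. The defect is then $\lesssim\sum_a c(s,t)^{a/p}c(u,v)^{(N-a)/p}$ with $(N-a)/p\ge 3/p>1$. The (generalized) sewing lemma immediately gives $|\X^{(N)}_{s,t}|=|\X^{(N)}_{s,t}-\Xi_{s,t}|\lesssim\sum_a c(s,t)^{a/p}c(s,t)^{(N-a)/p}\lesssim c(s,t)^{N/p}$. The identification with Lyons' extension then needs only uniqueness: the top-level difference is additive and $O(c^{N/p})$ with $N/p>1$, hence zero.

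Your detour through Lyons' construction is nonetheless valid. By Chen's relation at lower levels, the level-$N$ component of the partition product of $(1,X,\dots,\X^{(N-1)},0)$ is exactly $\sum_{[u,v]}\sum_{j=1}^{N-1}\X^{(N-j)}_{s,u}\otimes\X^{(j)}_{u,v}$. Its $j=1,2$ part is the compensated Riemann sum, and its $j\ge3$ part is $\lesssim\sum c(u,v)^{j/p}\to0$.

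\textbf{What your route buys.} It identifies the rough integral with Lyons' limit directly, and it yields Chen's relation at level $N$ for free. The paper asserts multiplicativity ``by definition'', which actually requires additivity of the rough integral together with Chen at lower levels.

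\textbf{What it costs.} You rely on the internal workings of Lyons' theorem: convergence of partition products as the mesh vanishes, and the bound after rescaling $c$ to meet the normalized hypothesis. The paper needs only the sewing lemma plus the uniqueness statement.

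The uniformity in $s$ you flag is harmless. The almost-multiplicative defect of $\widetilde{\X}^N$ does not involve $s$, and the identification step uses only the induction bound $|\X^{(N-j)}_{s,u}|\lesssim c(s,t)^{(N-j)/p}$.
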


\begin{proof}
  Let $N = 3$. We note that 
  \begin{equation*}
    \X^{(N-1)}_{s,v} - \X^{(N-1)}_{s,u} = \X^{(2)}_{s,v} - \X^{(2)}_{s,u} = \X^{(2)}_{u,v} + X_{s,u} \otimes X_{u,v}, \qquad s \leq u \leq v \leq t.
  \end{equation*}
  That is, $r \mapsto \X^{(2)}_{s,r}$ is a controlled path w.r.t.~$X$ on $[s,t]$, with $(\X^{(2)}_{s,\cdot})'_u = X_{s,u}$ and $R^{\X^{(2)}_{s,\cdot}}_{u,v} = \X^{(2)}_{u,v}$.
    
  To show the existence of the rough integral of a controlled path with respect to a rough path, we set $A_{u,v} := \X^{(2)}_{s,u} \otimes X_{u,v} + (\X^{(2)}_{s,\cdot})'_u \otimes \X^{(2)}_{u,v}$ and $\delta A_{u,v,w} := A_{u,w} - A_{u,v} - A_{v,w}$ for $s \leq u \leq v \leq w \leq t$. We then have that
  \begin{align*}
    &|\delta A_{u,v,w}| \\
    &\quad = |- R^{\X^{(2)}_{s,\cdot}}_{u,v} \otimes X_{v,w} - ((\X^{(2)}_{s,\cdot})'_v - (\X^{(2)}_{s,\cdot})'_u) \otimes \X^{(2)}_{v,w}| \\
    &\quad = |- \X^{(2)}_{u,v} \otimes X_{v,w} - X_{u,v} \otimes \X^{(2)}_{v,w}| \\ 
    &\quad \lesssim c(u,v)^{\frac 2 p} c(v,w)^{\frac 1 p} + c(u,v)^{\frac 1 p} c(v,w)^{\frac 2 p} \lesssim c(u,w)^{\frac 3 p}.
  \end{align*}
  Therefore, by the sewing lemma, we obtain the estimate
  \begin{equation*}
    |\X^{(3)}_{s,t}| 
    =  \bigg|\int_s^t (\X^{(2)}_{s,\cdot})_r \otimes \d \bX_r  \bigg|
    =   \bigg|\int_s^t (\X^{(2)}_{s,\cdot})_r \otimes \d \bX_r - (\X^{(2)}_{s,\cdot})_s \otimes X_{s,t} - (\X^{(2)}_{s,\cdot})'_s \otimes \X^{(2)}_{s,t}   \bigg| \lesssim c(s,t)^{\frac 3 p},
  \end{equation*}
  for $(s,t) \in \Delta_T$.
    
  We apply an inductive argument: Assuming that the claim holds true for any $n < N$, for $N > 3$, we now let $n \leq N$. We begin by noting that $r \mapsto Y_r := \X^{(n-1)}_{s,r}$ is a controlled path w.r.t.~$X$ on $[s,t]$ (as a rough integral) and we observe that, by Chen's relation,
  \begin{equation*}
    Y_v - Y_u = \X^{(n-1)}_{s,v} - \X^{(n-1)}_{s,u} = \sum_{j=0}^{n-3} \X^{(j)}_{s,u} \otimes \X^{(n-1-j)}_{u,v} + \X^{(n-2)}_{s,u} \otimes X_{u,v},
  \end{equation*}
  for $s \leq u \leq v \leq t$. That is, $Y'_u = \X^{(n-2)}_{s,u}$, and $R^Y_{u,v} = \sum_{j=0}^{n-3} \X^{(j)}_{s,u} \otimes \X^{(n-1-j)}_{u,v}$. Analogously to above, we derive for $A_{u,v} := Y_u \otimes X_{u,v} + Y'_u \otimes \X^{(2)}_{u,v}$ and $\delta A_{u,v,w} := A_{u,w} - A_{u,v} - A_{v,w}$ that
  \begin{align*}
    &|\delta A_{u,v,w}| = |-R^Y_{u,v} \otimes X_{v,w} - Y'_{u,v} \otimes \X^{(2)}_{v,w}| \\
    &\quad = \bigg|-(\sum_{j=0}^{n-3} \X^{(j)}_{s,u} \otimes \X^{(n-1-j)}_{u,v}) \otimes X_{v,w} - (\sum_{j=0}^{n-3} \X^{(j)}_{s,u} \otimes \X^{(n-2-j)}_{u,v}) \otimes \X^{(2)}_{v,w} \bigg| \\
    &\quad = \bigg|-(\sum_{j=0}^{n-3} \X^{(j)}_{s,u} \otimes (\X^{(n-1-j)}_{u,v} \otimes X_{v,w} + \X^{(n-2-j)}_{u,v} \otimes \X^{(2)}_{v,w}) \bigg| \\
    &\quad \lesssim  \sum_{j=0}^{n-3}c(s,u)^{\frac{j}{p}}(c(u,v)^{\frac{n-1-j}{p}} c(v,w)^{\frac{1}{p}}+c(u,v)^{\frac{n-2-j}{p}}c(v,w)^{\frac{2}{p}})\\
    &\quad\le \sum_{j=0}^{n-3}c(s,t)^{\frac{j}{p}}c(u,v)^{\frac{n-1-j}{p}} c(v,w)^{\frac{1}{p}}+c(s,t)^{\frac{j}{p}}c(u,v)^{\frac{n-2-j}{p}}c(v,w)^{\frac{2}{p}}\\
    &\quad= \sum_{j=0}^{n-3} w_{1,1,j}^{\alpha_{1,1}}(u,v) w_{1,2}^{\alpha_{1,2}}(v,w) + w_{2,1,j}^{\alpha_{2,1}}(u,v) w_{2,2}^{\alpha_{2,2}}(v,w),
  \end{align*}
  where
  \begin{align*}
    w_{1,1,j} = c(s,t)^{\frac{p}{n-1-j} \frac{j}{p}} c, \quad \alpha_{1,1} = \frac{n-1-j}{p}, \quad w_{1,2} = c, \quad \alpha_{1,2} = \frac{1}{p}, \\
    w_{2,1,j} = c(s,t)^{\frac{p}{n-2-j} \frac{j}{p}} c, \quad \alpha_{2,1} = \frac{n-2-j}{p}, \quad w_{2,2}=c, \quad \alpha_{2,2} = \frac{2}{p}.
  \end{align*}
  Then, by the generalized sewing lemma, see e.g.~\cite[Theorem~2.5]{Friz2018}, we have
  \begin{align*}
    |\X^{(n)}_{s,t}| 
    &= \bigg|\int_s^t (\X^{(n-1)}_{s,\cdot})_r \otimes \d \bX_r\bigg| 
    \\
    &= \bigg|\int_s^t (\X^{(n-1)}_{s,\cdot})_r \otimes \d \bX_r - (\X^{(n-1)}_{s,\cdot})_s \otimes X_{s,t} - (\X^{(n-1)}_{s,\cdot})'_s \otimes \X^{(n-1)}_{s,t}\bigg| \\
    &\lesssim \sum_{j=0}^{n-3} w_{1,1,j}^{\alpha_{1,1}}(s,t) w_{1,2}^{\alpha_{1,2}}(s,t) + w_{2,1,j}^{\alpha_{2,1}}(s,t) w_{2,2}^{\alpha_{2,2}}(s,t)\\
    &=\sum_{j=0}^{n-3}c(s,t)^{\frac{j}{p}}c(s,t)^{\frac{n-1-j}{p}} c(s,t)^{\frac{1}{p}}+c(s,t)^{\frac{j}{p}}c(s,t)^{\frac{n-2-j}{p}}c(s,t)^{\frac{2}{p}}\\
    &\lesssim_n c(s,t)^{\frac{n}{p}}.
  \end{align*} 
    
  Further, we notice that $(s,t) \mapsto (1,X_{s,t},\X^{(2)}_{s,t},\dots,\X^{(N)}_{s,t}) \in T^N(\R^d)$ is a multiplicative functional by definition.
    
  Now take $\widetilde{\X}^N$ to be Lyons' extension of $(1,X_{s,t},\X^{(2)}_{s,t})$ to $T^{N}(\R^d)$ for any $N > 3$. We know that both $\X^3$ and $\widetilde{\X}^3$ are of finite $p$-variation controlled by $c$. Since $\X^2 = \widetilde{\X}^2$, it holds that
  \begin{equation*}
    |\X^3_{s,t} - \widetilde{\X}^3_{s,t}| = |\X^{(3)}_{s,t} - \widetilde{\X}^{(3)}_{s,t}| \leq C c(s,t)^{\frac{3}{p}}, \qquad (s,t) \in \Delta_T,
  \end{equation*}
  where $C$ denotes the sum of the two implicit multiplicative constants in the regularity estimates of $\X^{(3)}$ and $\widetilde{\X}^{(3)}$, respectively. By~\cite[Lemma~3.4]{Lyons2007}, the map $(s,t) \mapsto \X^{(3)}_{s,t} - \widetilde{\X}^{(3)}_{s,t}$ is additive in $(\R^d)^{\otimes 3}$, so that the path $\X^{(3)}_{0,\cdot} - \widetilde{\X}^{(3)}_{0,\cdot}$ is of finite $p/3$-variation in $(\R^d)^{\otimes 3}$ starting at zero. Due to the regularity of $c$ and since $p/3 < 1$, it then follows that $(s,t) \mapsto \X^{(3)}_{s,t} - \widetilde{\X}^{(3)}_{s,t}$ is equal to zero, that is $\X^3$ equals $\X^{(3)}$. We apply an induction argument on $N$: Because the arguments carry through, we conclude that $\X^N = \widetilde{\X}^N$ for any $N \geq 1$, which completes the proof.
\end{proof}

\section{Some additional results in rough path theory}\label{appendix: some results}

We recall and extend some essential results on rough integration and rough paths.

\begin{lemma}[Proposition~2.4 in \cite{Allan2023b}]\label{lemma: integration against controlled path}
  Let $\bX = (X,\X) \in \cC^p([0,T];\R^d)$ and let $(F,F'), (G,G') \in \mathscr{C}^p_X$ be controlled paths with remainders $R^F$ and $R^G$, respectively. Then the limit
  \begin{equation}\label{eq: integration against controlled path}
    \int_0^T F_r \dd G_r := \lim_{|\cP| \to 0} \sum_{[s,t] \in \mathcal{P}} F_s G_{s,t} + F'_s G'_s \X_{s,t}
  \end{equation}
  exists along every sequence of partitions $\mathcal{P}$ of $[0,T]$ with mesh size $|\mathcal{P}| \to 0$, and comes with the estimate
  \begin{align*}
    &\bigg|\int_s^t F_r \dd G_r - F_s G_{s,t} - F'_s G'_s \X_{s,t} \bigg|\\
    &\quad \leq C\Big(\|F'\|_\infty (\|G'\|_{p,[s,t)}^p + \|X\|_{p,[s,t)}^p)^{\frac{2}{p}} \|X\|_{p,[s,t]} + \|F\|_{p,[s,t)} \|R^G\|_{\p,[s,t]}\\
    &\quad\quad\quad\quad + \|R^F\|_{\p,[s,t)} \|G'\|_\infty \|X\|_{p,[s,t]} + \|F'G'\|_{p,[s,t)} \|\X\|_{\p,[s,t]}\Big),
  \end{align*}
  for every $(s,t) \in \Delta_T$, where the constant $C$ depends only on $p$.
\end{lemma}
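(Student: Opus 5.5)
This lemma is cited from Proposition~2.4 in \cite{Allan2023b}, so the plan is to reproduce the standard sewing-lemma argument in the $p$-variation setting. Set, for $(u,v) \in \Delta_T$,
\begin{equation*}
  \Xi_{u,v} := F_u G_{u,v} + F'_u G'_u \X_{u,v}.
\end{equation*}
The goal is to show that $\Xi$ is an almost-additive germ, i.e.\ that $\delta \Xi_{u,v,w} := \Xi_{u,w} - \Xi_{u,v} - \Xi_{v,w}$ is bounded by a sum of products of control functions whose exponents add up to more than one.

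\textbf{Step 1: compute $\delta\Xi$.} Using Chen's relation $\X_{u,w} = \X_{u,v} + \X_{v,w} + X_{u,v} \otimes X_{v,w}$, together with $F_{u,v} = F'_u X_{u,v} + R^F_{u,v}$ and $G_{v,w} = G'_v X_{v,w} + R^G_{v,w}$, the increment becomes
\begin{equation*}
  -\delta\Xi_{u,v,w} = R^F_{u,v} G_{v,w} + F'_u\,(G'_v - G'_u)\, X_{u,v}\otimes X_{v,w} + (F'G')_{u,v}\,\X_{v,w}.
\end{equation*}
Some regrouping is needed here. Expand $F_{u,v}G_{v,w}$ as $F'_uX_{u,v}G'_vX_{v,w} + F'_uX_{u,v}R^G_{v,w} + R^F_{u,v}G_{v,w}$. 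Matching these terms against the estimate in the statement produces the four groups: $\|F'\|_\infty$ times ($G'$-variation or $X$-variation squared) times $\|X\|_p$; $\|F\|_p\|R^G\|_{p/2}$; $\|R^F\|_{p/2}\|G'\|_\infty\|X\|_p$; and $\|F'G'\|_p\|\X\|_{p/2}$.

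\textbf{Step 2: bound each term.} Bound each term by products such as $\|R^F\|_{p/2,[u,v]}\,\|G'\|_\infty\,\|X\|_{p,[v,w]}$. The exponents are $2/p + 1/p = 3/p > 1$, and for the term $\|F\|_p\|R^G\|_{p/2}$ they are $1/p + 2/p$ as well.

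\textbf{Step 3: sew.} Apply the sewing lemma in its $p$-variation, several-controls form (the generalized sewing lemma \cite[Theorem~2.5]{Friz2018} already used in Appendix~\ref{sec: finite p-variation}). This gives existence of the limit along any sequence of partitions with vanishing mesh, independence of the sequence, and the bound on $|\int_s^t F\,\d G - \Xi_{s,t}|$ by the same products evaluated on $[s,t]$, with a constant depending only on $p$ through $\zeta(3/p)$-type sums.

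\textbf{Main obstacle.} I expect the delicate point to be the half-open intervals $[s,t)$ in the estimate, which arise because the left factors are evaluated at the left endpoints only. I would handle this by running the sewing argument with controls $w(u,v) = \|\cdot\|_{[u,v)}$ and checking that these remain superadditive, since left-continuity is not needed there, as in \cite{Allan2023b}.
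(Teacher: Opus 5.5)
Your strategy of sewing the germ $\Xi_{u,v}=F_uG_{u,v}+F'_uG'_u\mathbb{X}_{u,v}$ is the standard route to the cited result. However, Step~1 does not deliver the estimate you claim.

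First, your displayed formula for $-\delta\Xi_{u,v,w}$ omits the term $F'_uX_{u,v}R^G_{v,w}$. For $G'\equiv0$ one has $-\delta\Xi_{u,v,w}=F_{u,v}G_{v,w}$, not $R^F_{u,v}G_{v,w}$.

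Second, the expansion $F_{u,v}G_{v,w}=F'_uX_{u,v}G'_vX_{v,w}+F'_uX_{u,v}R^G_{v,w}+R^F_{u,v}G_{v,w}$ is a true identity, but it distributes the remainders the wrong way. Its terms $F'_uX_{u,v}R^G_{v,w}$ and $R^F_{u,v}G_{v,w}$ are bounded by $\|F'\|_\infty\|X\|_{p,[u,v]}\|R^G\|_{p/2,[v,w]}$ and $\|R^F\|_{p/2,[u,v]}\|G\|_{p,[v,w]}$. These are not the groups $\|F\|_p\|R^G\|_{p/2}$ and $\|R^F\|_{p/2}\|G'\|_\infty\|X\|_p$ of the statement. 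In particular, the bound $\|R^F\|_{p/2,[u,v]}\|G'\|_\infty\|X\|_{p,[v,w]}$ you invoke in Step~2 does not control $R^F_{u,v}G_{v,w}$, because $G_{v,w}$ also contains $R^G_{v,w}$. As written, you would get existence of the integral with a different error bound, so the claimed ``matching'' is false.

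The fix is to split $G_{v,w}=G'_vX_{v,w}+R^G_{v,w}$ first, keep $F_{u,v}$ whole in the remainder term, and expand $F_{u,v}$ only in the other term:
\begin{equation*}
  -\delta\Xi_{u,v,w}=F_{u,v}R^G_{v,w}+R^F_{u,v}G'_vX_{v,w}+F'_u(G'_v-G'_u)X_{u,v}\otimes X_{v,w}+(F'G')_{u,v}\mathbb{X}_{v,w},
\end{equation*}
with the index contractions as in the statement. These four terms are bounded exactly by the four groups, with the left factor on $[u,v]$, the right factor on $[v,w]$, and total exponent $3/p>1$.

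Two remarks on Step~3. A sewing lemma for possibly discontinuous controls, as in the c\`adl\`ag framework of \cite{Friz2018}, does not by itself give the limit along every sequence of partitions with $|\mathcal{P}|\to0$, which the statement asserts. You can get it here with one extra observation. In each of the four products, one control $\tilde w$ (exponent $1/p$) is the $p$-variation control of a continuous path ($X$, $F$ or $F'G'$), so $\max_{[u,v]\in\mathcal{P}}\tilde w(u,v)\to0$. If $w$ is the other control, with exponent $2/p<1$, H\"older's inequality gives $\sum_{[u,v]\in\mathcal{P}}w(u,v)^{2/p}\tilde w(u,v)^{1/p}\le\max_{[u,v]\in\mathcal{P}}\tilde w(u,v)^{3/p-1}\,w(0,T)^{2/p}\,\tilde w(0,T)^{1-2/p}$. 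This is exactly what comparing a partition with a common refinement requires.

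On the other hand, the half-open intervals are not the delicate point you expect. For continuous paths and continuous two-parameter functions, the $p$-variation over $[s,t)$ coincides with that over $[s,t]$. The distinction only matters in the c\`adl\`ag setting of \cite{Allan2023b}.
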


\begin{remark}\label{rem: tensored controlled path}
  For $m, n > 1$, suppose that $F \in C^{p\textup{-var}}([0,T];(\R^d)^{\otimes m-1})$, $F' \in C^{p\textup{-var}}([0,T];(\R^d)^{\otimes m})$, $G \in C^{p\textup{-var}}([0,T];(\R^d)^{\otimes n-1})$, $G' \in C^{p\textup{-var}}([0,T];(\R^d)^{\otimes n})$. Then,
  \begin{equation*}
    \int_0^T F_r \dd G_r := \int_0^T F_r \otimes \d G_r := \lim_{|\cP|\to 0} \sum_{[s,t] \in \cP} F_s \otimes G_{s,t} + (F'_s \otimes G'_s) \X_{s,t},
  \end{equation*}
  relative to the rough path $\bX = (X,\X)$. In writing $F'_s \otimes G'_{s,t}$, we technically mean the $m+n$-tensor whose component is given by $[F'_s \otimes G'_s]^{i_1 \dots i_m j_1 \dots j_n} = (F'_s)^{i_1\dots i_m} (G'_s)^{j_1 \dots j_n}$, and we interpret the ``multiplication'' $(F'_s \otimes G'_s) \X_{s,t}$ as the matrix whose $(m-1)+(n-1)$ component is given by $[(F'_s \otimes G'_s) \X_{s,t}]^{i_1 \dots i_{m-1} j_1 \dots j_{n-1}} = \sum_i \sum_j (F'_s)^{i_1 \dots i_{m-1} i} (G'_s)^{j_1 \dots j_{n-1} j} \X^{ij}$.
\end{remark}

Property $\gamma$-\textup{(RIE)} not only ensures the existence of a suitable rough path lift of a path, but also allows the rough integral to be expressed as a limit of Riemann sums, depending on $\gamma$. The next theorem is a slight generalization of \cite[Theorem~2.12]{Das2025}.

\begin{theorem}\label{thm: rough int against controlled path under RIE}
  Let $p \in (2,3)$, and let $\pi^n = \{0 = t^n_0 < t^n_1 < \dots < t^n_{N_n} = T\}$, $n \in \N$, be a sequence of partitions such that $|\pi^n| \to 0$ as $n \to \infty$. Suppose that $X \in C([0,T];\R^d)$ satisfies Property $\gamma$-\textup{(RIE)} relative to some $\gamma \in [0,1]$, $p$ and $\pi = (\pi^n)_{n \in \N}$, and let $\bX^{\gamma,\pi}$ be the canonical rough path lift of $X$, as constructed in Proposition~\ref{prop: rough path lift under gamma RIE}. Let $q > 0$ be such that $2/p + 1/q > 1$ and let $(F,F'), (G,G') \in \mathscr{C}^{p,q}_X$ be controlled paths with respect to $X$. Then, the limit
  \begin{equation*}
    \int_0^t F_r \dd G_r = \lim_{n \to \infty} \sum_{k=0}^{N_n-1} (F_{t^n_k} + \gamma F_{t^n_k, t^n_{k+1}}) G_{t^n_k \wedge t, t^n_{k+1} \wedge t},
  \end{equation*}
  exists, where the convergence holds uniformly for $t \in [0,T]$, and it coincides with the rough integral of $(F,F')$ against $(G,G')$ as defined in \eqref{eq: integration against controlled path}.
\end{theorem}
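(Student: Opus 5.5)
The plan is to reduce to the case $\gamma=0$-type compensated Riemann sums, i.e.\ to the rough integral defined in \eqref{eq: integration against controlled path}, by comparing the $\gamma$-Riemann sums with the compensated sums along the same partitions $\pi^n$. Lemma~\ref{lemma: integration against controlled path} gives existence of the rough integral along any sequence of partitions with vanishing mesh. It is stated for $\mathscr{C}^p_X$, but the same sewing argument works for $\mathscr{C}^{p,q}_X$ with $2/p+1/q>1$, as in \cite[Theorem~2.12]{Das2025}. The rough integral is taken with respect to $\bX^{\gamma,\pi}$ from Proposition~\ref{prop: rough path lift under gamma RIE}.

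First I will write, for each interval $[t^n_k,t^n_{k+1}]$ (for $t$ a partition point; general $t$ by truncation),
\begin{equation*}
  (F_{t^n_k}+\gamma F_{t^n_k,t^n_{k+1}})G_{t^n_k,t^n_{k+1}} = F_{t^n_k}G_{t^n_k,t^n_{k+1}} + \gamma F'_{t^n_k}X_{t^n_k,t^n_{k+1}}\,G'_{t^n_k}X_{t^n_k,t^n_{k+1}} + \mathcal{E}^n_k,
\end{equation*}
where $\mathcal{E}^n_k$ collects terms involving $R^F$ or $R^G$, of order $c^{1/r+1/p}$ with $1/r+1/p>1$, so that $\sum_k\mathcal{E}^n_k\to0$ uniformly. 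Next, by the definition of $\X^{\gamma,\pi,(2)}$ in \eqref{eq: rough path lift under gamma RIE} and Property $\gamma$-(RIE), the one-step term $\gamma X_{t^n_k,t^n_{k+1}}\otimes X_{t^n_k,t^n_{k+1}}$ is exactly the discrete second-order increment $(\int X\otimes \d^{\gamma,\pi^n}X)_{t^n_k,t^n_{k+1}}-X_{t^n_k}\otimes X_{t^n_k,t^n_{k+1}}$. Hence the $\gamma$-sum equals $\sum_k F_{t^n_k}G_{t^n_k,t^n_{k+1}}+F'_{t^n_k}G'_{t^n_k}\X^{n}_{t^n_k,t^n_{k+1}}$ up to vanishing error, with $\X^{n}$ the discrete lift.

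The main step is to show that
\begin{equation*}
  \sum_k F'_{t^n_k}G'_{t^n_k}\bigl(\X^n_{t^n_k,t^n_{k+1}}-\X^{\gamma,\pi,(2)}_{t^n_k,t^n_{k+1}}\bigr)\to0
\end{equation*}
uniformly. Here I would follow the discrete sewing approach of \cite[Theorem~2.12]{Das2025}. The difference $\X^n-\X^{\gamma,\pi,(2)}$ is additive over partition intervals, since both satisfy Chen's relation on $\pi^n$ with the same first level. It converges to $0$ uniformly by Property $\gamma$-(RIE)(i), and it is bounded in $p/2$-variation uniformly in $n$ by (ii). Since $F'G'$ has finite $q$-variation and $2/p+1/q>1$, a Young-type summation-by-parts estimate for discrete sums, combined with interpolation between the uniform norm and the $p/2$-variation, gives convergence to $0$ in a slightly worse variation exponent. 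The Young estimate then yields vanishing of the sum. This interpolation step is the expected obstacle; I would mirror the argument used in Appendix~\ref{appendix: proof of lemma}.

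Finally, the compensated sums with $\X^{\gamma,\pi,(2)}$ converge to the rough integral by Lemma~\ref{lemma: integration against controlled path}, with the local estimate supplying uniformity in $t$. Combining these steps identifies the limit and completes the proof.
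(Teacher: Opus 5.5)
Your argument is correct, but it takes a different route from the paper.

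\textbf{The paper's route.} The paper does not compare the $\gamma$-sums with compensated sums on the grid. It passes to the piecewise linear interpolations $\bar F^n,\bar G^n,\bar X^n$ along $\pi^n$. Using \cite[Lemma~2.11]{Das2025} and interpolation, it shows that the canonical lifts of $\bar X^n$ converge in $p'$-variation to the geometric lift $(X,\X^{\gamma,\pi,(2)}+\frac{1}{2}[X]^{\gamma,\pi})$. The stability result \cite[Lemma~A.2]{Allan2025} then gives convergence of the midpoint sums $\sum_k (F_{t^n_k}+\frac{1}{2}F_{t^n_k,t^n_{k+1}})G_{t^n_k\wedge t,t^n_{k+1}\wedge t}$ to the rough integral against that geometric lift. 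The $\gamma$-sums are recovered by subtracting $\frac{1}{2}(1-2\gamma)\sum_k F_{t^n_k,t^n_{k+1}}G_{t^n_k\wedge t,t^n_{k+1}\wedge t}\to\frac{1}{2}\int_0^t F'_rG'_r\,\d[X]^{\gamma,\pi}_r$. This bracket correction turns the integral against the geometric lift into the one against $\X^{\gamma,\pi,(2)}$.

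\textbf{Your route.} You stay on the grid and write $\X^n_{s,t}:=(\int_0^\cdot X_r\otimes\d^{\gamma,\pi^n}X_r)_{s,t}-X_s\otimes X_{s,t}$.
\begin{enumerate}
\item The exact identity $\gamma X_{t^n_k,t^n_{k+1}}\otimes X_{t^n_k,t^n_{k+1}}=\X^n_{t^n_k,t^n_{k+1}}$ turns the $\gamma$-sum into a compensated sum with the discrete lift, up to remainders of order $c^{2/p+1/q}$.
\item The defect $\X^n-\X^{\gamma,\pi,(2)}$ is exactly the increment of $J^n:=\int_0^\cdot X_r\otimes\d^{\gamma,\pi^n}X_r-\int_0^\cdot X_r\otimes\d^{\gamma,\pi}X_r$. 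This vanishes uniformly by (i), and its $p/2$-variation on $\pi^n$ is bounded uniformly in $n$ by (ii) and Proposition~\ref{prop: rough path lift under gamma RIE}.
\item A discrete Young estimate with $p'>p$ and $2/p'+1/q>1$, combined with interpolation, then gives $\sum_k F'_{t^n_k}G'_{t^n_k}J^n_{t^n_k,t^n_{k+1}}\to 0$ uniformly in $t$.
\end{enumerate}

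\textbf{Comparison.} Your route is more elementary and self-contained. It uses only the two conditions of Property $\gamma$-\textup{(RIE)} and treats all $\gamma$, including $\gamma=\frac{1}{2}$, at once. It needs neither convergence of piecewise linear lifts nor continuity of the rough integral in the driving rough path. The paper's route hands the analysis to known stability results and makes the It{\^o}--Stratonovich-type bracket correction explicit, in line with the rest of the paper. Both rely on the routine extension of Lemma~\ref{lemma: integration against controlled path} from $\mathscr{C}^p_X$ to $\mathscr{C}^{p,q}_X$.
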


\begin{proof}
  By Lemma~\ref{lemma: integration against controlled path}, the rough integral of $(F,F')$ against $(G,G')$ (relative to $\bX^{\gamma,\pi}$) exists. 
    
  We denote by $(\bar{F}^n)_{n \in \N}$, $(\bar{G}^n)_{n \in \N}$ and $(\bar{X}^n)_{n \in \N}$ the piecewise linear interpolation of $F$, $G$ and $X$, respectively, along $\pi = (\pi^n)_{n \in \N}$. Thus, $(\bar{F}^n, F')$ and $(\bar{G}^n, G')$ are controlled by $\bar{X}^n$, with remainders $R^{\bar{F}^n}_{s,t} = \bar{F}^n_{s,t} - F'_s \bar{X}^n_{s,t}$ and $R^{\bar{G}^n}_{s,t} = \bar{G}^n_{s,t} - G'_s \bar{X}^n_{s,t}$, $(s,t) \in \Delta_T$, respectively. As shown in the proof of~\cite[Theorem~4.19]{Perkowski2016}, if $p' > p$ and $q' > q$ such that $2/p' + 1/q' > 1$, then $(\bar{F}^n,F',R^{\bar{F}^n})$ converges in $(q',p',r')$-variation to $(F,F',R^F)$, where $1/r' = 1/p' + 1/q'$.
    
  Since the sequence $(\bar{X}^n)_{n \in \N}$ has uniformly bounded $p$-variation and $\bar{X}^n$ converges uniformly to $X$ as $n \to \infty$, it follows by interpolation that $\bar{X}^n$ converges to $X$ with respect to the $p'$-variation norm, i.e., $\|\bar{X}^n - X\|_{p'} \to 0$ as $n \to \infty$. It follows similarly using \cite[Lemma~2.11]{Das2025} that $\|(\bar{\X}^{n,(2)}- (\X^{\gamma,\pi,(2)} + \frac{1}{2} [X]^{\gamma,\pi})\|_{\frac{p'}{2}} \to 0$ and, hence, that $\|(\bar{X}^n,\bar{\X}^{n,(2)}) - (X,\X^{\gamma,\pi,(2)} + \frac{1}{2} [X]^{\gamma,\pi})\|_{p'} \to 0$ as $n \to \infty$; analogously for $(G,G',R^G)$.
    
  It follows from \cite[Lemma~A.2]{Allan2025} that
  \begin{equation}\label{eq: rough integrals converge}
    \int_0^t \bar{F}^n_r \dd \bar{G}^n_r \longrightarrow \int_0^t F_r \dd G_r \qquad \text{as } n \longrightarrow \infty,
  \end{equation}
  where the convergence is uniform in $t \in [0,T]$. Note that in \eqref{eq: rough integrals converge} the integral $\int_0^t F^n_u \dd \bar{G}_u$ is defined relative to the rough path $(\bar{X}^n, \bar{\X}^{n,(2)})$, whilst the limiting rough integral $\int_0^t F_u \dd G_u$ is defined relative to $(X,\X^{\gamma,\pi,(2)} + \frac{1}{2} [X]^{\gamma,\pi})$.
    
  But, for every $t \in [0,T]$, it holds that
  \begin{align*}
    &\lim_{n \to \infty} \int_0^t \bar{F}^n_r \dd \bar{G}^n_r \\
    &\quad = \lim_{n \to \infty} \sum_{k=0}^{N_n-1} (F_{t^n_k} + \frac{1}{2} F_{t^n_k, t^n_{k+1}}) G_{t^n_k \wedge t, t^n_{k+1} \wedge t} \\
    &\quad = \lim_{n \to \infty} \Big( \sum_{k=0}^{N_n-1} (F_{t^n_k} + \gamma F_{t^n_k, t^n_{k+1}}) G_{t^n_k \wedge t, t^n_{k+1} \wedge t} + \frac{1}{2} (1 - 2\gamma) \sum_{k=0}^{N_n-1} F_{t^n_k, t^n_{k+1}} G_{t^n_k \wedge t, t^n_{k+1} \wedge t} \Big).
  \end{align*}
  Since $(F,F'), (G,G') \in \mathscr{C}^{p,q}_X$, it is immediate that the second term on the right-hand side converges to $\frac{1}{2} \int_0^t F'_s G'_s \dd [X]^{\gamma,\pi}_s$, $t \in [0,T]$.

  Then, we have that
  \begin{align*}
    &\lim_{n \to \infty} \sum_{k=0}^{N_n-1} (F_{t^n_k} + \gamma F_{t^n_k,t^n_{k+1}}) G_{t^n_k \wedge t, t^n_{k+1} \wedge t} \\
    &\quad = \lim_{n \to \infty} \int_0^t \bar{F}^n_r \dd G^n_r - \frac{1}{2} \int_0^t F'_r G'_r \dd [X]^{\gamma,\pi}_r \\
    &\quad = \int_0^t F_r \dd G_r - \frac{1}{2} \int_0^t F'_r G'_r \dd [X]^{\gamma,\pi}_r \\
    &\quad = \lim_{|\cP|\to 0} \sum_{[u,v]\in \cP} F_u G_{u,v} + F'_u G'_u (\X^{\gamma,\pi,(2)} + \frac{1}{2}[X]^{\gamma,\pi})_{u,v} - \frac{1}{2} \lim_{|\cP|\to 0} \sum_{[u,v]\in \cP} F'_u G'_u [X]^{\gamma,\pi}_{u,v} \\
    &\quad = \lim_{|\cP|\to 0} \sum_{[u,v]\in \cP} F_u G_{u,v} + F'_u G'_u \X^{\gamma,\pi,(2)}_{u,v} \\
    &\quad = \int_0^t F_r \dd G_r,
  \end{align*}
  where the limit is taken over any sequence of partitions $\cP$ of $[0,t]$ with vanishing mesh size.
\end{proof}

The following proposition states that the map mapping a rough path to its rough path bracket is continuous.

\begin{proposition}\label{prop: rough path to bracket is continuous}
  Let $p \in (2,3)$. The map
  \begin{equation*}
    \cC^p([0,T];\R^d) \ni \bX = (X,\X^{(2)}) \mapsto [\bX] \in C^{\p}([0,T];\R^d)
  \end{equation*}
  is continuous, where the rough path bracket $[\bX]$ of a rough path $\bX = (X,\X^{(2)})$ is defined by $[\bX]_t := X_{0,t} \otimes X_{0,t} - (\X^{(2)}_{0,t} + (\X^{(2)}_{0,t})^\top)$, $t \in [0,T]$.
\end{proposition}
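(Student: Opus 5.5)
We argue directly from the algebraic definition of the bracket and estimate differences of brackets in $\p$-variation by the $p$-variation distances of the underlying rough paths. Let $\bX = (X,\X^{(2)})$ and $\widetilde{\bX} = (\widetilde X,\tbbX^{(2)})$ be two rough paths in $\cC^p([0,T];\R^d)$. First we rewrite the bracket increment via Chen's relation: for $(s,t)\in\Delta_T$,
\begin{equation*}
  [\bX]_{s,t} = X_{s,t}\otimes X_{s,t} - \bigl(\X^{(2)}_{s,t} + (\X^{(2)}_{s,t})^\top\bigr).
\end{equation*}
This follows by expanding $X_{0,t}\otimes X_{0,t} - X_{0,s}\otimes X_{0,s}$ with $X_{0,t} = X_{0,s}+X_{s,t}$ and using $\X^{(2)}_{0,t} = \X^{(2)}_{0,s}+\X^{(2)}_{s,t}+X_{0,s}\otimes X_{s,t}$. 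The cross terms $X_{0,s}\otimes X_{s,t}+X_{s,t}\otimes X_{0,s}$ cancel exactly against the symmetrized Chen correction. This identity is the key step, since it turns the bracket, which is defined through quantities anchored at $0$, into a genuinely local two-parameter object.

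With this representation, the difference of brackets over $[s,t]$ is
\begin{equation*}
  [\bX]_{s,t}-[\widetilde\bX]_{s,t} = (X-\widetilde X)_{s,t}\otimes X_{s,t} + \widetilde X_{s,t}\otimes (X-\widetilde X)_{s,t} - \bigl((\X^{(2)}-\tbbX^{(2)})_{s,t} + ((\X^{(2)}-\tbbX^{(2)})_{s,t})^\top\bigr).
\end{equation*}
We raise this to the power $\p$ and sum over a partition $\cP$. For the symmetrized second-level term, the sum is directly bounded by $2^{\p}\|\X^{(2)}-\tbbX^{(2)}\|_{\p}^{\p}$. For the product terms, we apply H\"older's inequality with exponents $2,2$ to $\sum |a_{u,v}|^{\p}|b_{u,v}|^{\p}$. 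This gives the bound $\|X-\widetilde X\|_p^{\p}\bigl(\|X\|_p^{\p}+\|\widetilde X\|_p^{\p}\bigr)$, up to constants.

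Taking the supremum over partitions yields
\begin{equation*}
  \|[\bX]-[\widetilde\bX]\|_{\p} \lesssim \|X-\widetilde X\|_p\bigl(\|X\|_p+\|\widetilde X\|_p\bigr) + \|\X^{(2)}-\tbbX^{(2)}\|_{\p},
\end{equation*}
a local Lipschitz estimate. The right-hand side tends to zero as $\widetilde\bX\to\bX$ in $d_{p,\p\textup{-var}}$, because $\|\widetilde X\|_p$ stays bounded near $\bX$. Since $[\bX]_0 = 0$, the $\p$-variation seminorm controls the full path distance, including sup-norm convergence. This gives continuity of the map $\bX\mapsto[\bX]$.

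Applied to $\widetilde\bX=0$, the same estimate also shows $[\bX]\in C^{\p\textup{-var}}$. Continuity of $t\mapsto[\bX]_t$ is immediate from the continuity of $X$ and $\X^{(2)}$.
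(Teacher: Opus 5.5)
Your proof is correct and takes the same route as the paper. You split the bracket difference into the symmetrised second-level difference and the product term $X_{s,t}\otimes X_{s,t}-\widetilde{X}_{s,t}\otimes\widetilde{X}_{s,t}$, then bound the latter by H\"older/Cauchy--Schwarz to get $\|[\bX]-[\widetilde{\bX}]\|_{p/2}\lesssim(\|X\|_p+\|\widetilde{X}\|_p)\|X-\widetilde{X}\|_p+\|\X^{(2)}-\tbbX^{(2)}\|_{p/2}$. Your explicit check via Chen's relation that $[\bX]_{s,t}=X_{s,t}\otimes X_{s,t}-\bigl(\X^{(2)}_{s,t}+(\X^{(2)}_{s,t})^\top\bigr)$ is a worthwhile addition, since the paper uses this local form of the increments without stating it.
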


\begin{proof}
  Let $\bX^n = (X^n, \X^{n,(2)})$, $n \in \N$, $\bX = (X,\X^{(2)})$ be rough paths such that
  \begin{equation*}
     \|\bX^n;\bX\|_p = \|X^n-X\|_p + \|\X^{n,(2)}-\X^{(2)}\|_{\p} \longrightarrow 0 \qquad \text{as} \quad n \to \infty.
  \end{equation*}
  We first have that
  \begin{equation*}
    \|\X^{n,(2)} + (\X^{n,(2)})^\top - (\X^{(2)} + (\X^{(2)})^\top)\|_{\p} \lesssim \|\X^{n,(2)} - \X^{(2)}\|_{\p}. 
  \end{equation*}
  Further, it holds that
  \begin{align*}
    &|X^n_{s,t} \otimes X^n_{s,t} - X_{s,t} \otimes X_{s,t}|^{\p} \lesssim |X^n_{s,t} \otimes (X^n_{s,t} - X_{s,t})|^{\p} + |(X^n_{s,t} - X_{s,t}) \otimes X_{s,t}|^{\p} \\
    &\quad \lesssim (|X^n_{s,t}|^{\p} + |X_{s,t}|^{\p}) |X^n_{s,t}-X_{s,t}|^{\p}
  \end{align*}
  for any $(s,t) \in \Delta_T$. Thus, by the Cauchy--Schwarz inequality,
  \begin{align*}
    &\|X^n \otimes X^n - X \otimes X\|_{\p}^{\p} \\
    &\quad = \sup_{\cP} \sum_{[s,t] \in \cP} |X^n_{s,t} \otimes X^n_{s,t} - X_{s,t} \otimes X_{s,t}|^{\p} \\
    &\quad \lesssim \sup_{\cP} \sum_{[s,t] \in \cP}  (|X^n_{s,t}|^{\p} + |X_{s,t}|^{\p}) |X^n_{s,t}-X_{s,t}|^{\p} \\
    &\quad \lesssim \sup_{\cP} \bigg(\sum_{[s,t] \in \cP} (|X^n_{s,t}|^p + |X_{s,t}|^p) \bigg)^{\frac{1}{2}}  \bigg(\sum_{[s,t] \in \cP} |X^n_{s,t} - X_{s,t}|^p \bigg)^{\frac{1}{2}} \\
    &\quad \lesssim (\|X^n\|_p^{\p} + \|X\|_p^{\p}) \|X^n-X\|_p^{\p}.
  \end{align*}
  This implies that
  \begin{equation*}
    \|[\bX^n] - [\bX]\|_{\p} \lesssim (\|X^n\|_p + \|X\|_p) \|X^n-X\|_p + \|\X^{n,(2)} - \X^{(2)}\|_{\p} \longrightarrow 0 \qquad \text{as} \quad n \to \infty,
  \end{equation*}
  therefore, $\bX \mapsto [\bX]$ is continuous.
\end{proof}

\bibliography{references}{}
\bibliographystyle{amsalpha}

\end{document}